\numberwithin{equation}{section}
\numberwithin{table}{section}
\numberwithin{figure}{section}
\newcommand{\be}{\begin{equation}}
\newcommand{\ee}{\end{equation}}
\renewcommand{\epsilon}{\varepsilon}
\renewcommand{\phi}{\varphi}
\DeclareMathOperator{\sign}{sign}
\newcommand{\cE}{\mathcal{E}}
\newcommand{\cO}{\mathcal{O}}
\newcommand{\cR}{\mathcal{R}}
\newcommand{\cS}{\mathcal{S}}
\newcommand{\N}{\mathbb{N}}  % The natural numbers.
\newcommand{\Z}{\mathbb{Z}}  % The integers.
\newcommand{\R}{\mathbb{R}}  % The real numbers.
\theoremstyle{thmstyleone}%
\theoremstyle{thmstyletwo}%
\theoremstyle{thmstylethree}%
\newtheorem{thm}{Theorem}[section]
\newtheorem{prop}[thm]{Proposition}
\newtheorem{cor}[thm]{Corollary}
\newtheorem{defn}[thm]{Definition}
\begin{document}

\title[Seasonal Forcing Dominated Dynamics of a
piecewise smooth
Ghil-Zaliapin-Thompson ENSO model]{\large\bf Seasonal Forcing Dominated Dynamics of a piecewise smooth Ghil-Zaliapin-Thompson ENSO model}

%%=============================================================%%
%% GivenName	-> \fnm{Joergen W.}
%% Particle	-> \spfx{van der} -> surname prefix
%% FamilyName	-> \sur{Ploeg}
%% Suffix	-> \sfx{IV}
%% \author*[1,2]{\fnm{Joergen W.} \spfx{van der} \sur{Ploeg} 
%%  \sfx{IV}}\email{iauthor@gmail.com}
%%=============================================================%%

\author[1]{\fnm{Samuel} \sur{Bolduc-St-Aubin}}%\email{sbol082@aucklanduni.ac.nz}

\author[2]{\fnm{Antony R.} \sur{Humphries}}%\email{tony.humphries@mcgill.ca}
%\equalcont{These authors contributed equally to this work.}

\affil*[1]{\orgdiv{Department of Mathematics}, \orgname{University of Auckland}, 
\orgaddress{Private Bag 92019, Auckland 1142, New Zealand}}

\affil[2]{\orgdiv{Department of Mathematics and Statistics, and, Department of Physiology}, 
\orgname{McGill University}, 
\orgaddress{Montreal, QC H3A 0B9, Canada}}

%\affil[3]{\orgdiv{Department of Physiology}, 
%\orgname{McGill University}, 
%\orgaddress{Montreal, QC H3A 0B9, Canada}}

%% Previous structure
%\affil*[1]{\orgdiv{Department of Mathematics}, \orgname{University of Auckland}, \orgaddress{\street{Private Bag 92019}, \city{Auckland}, \postcode{1142}, %\state{Auckland}, 
%\country{New Zealand}}}

%\affil[2]{\orgdiv{Department of Mathematics and Statistics and Department of Physiology}, \orgname{McGill University}, \orgaddress{\street{Street}, \city{Montreal}, \postcode{H3A 0B9}, \state{QC}, \country{Canada}}}

%%==================================%%
%% Sample for unstructured abstract %%
%%==================================%%

\abstract{The Ghil–Zaliapin–Thompson (GZT) model, a scalar delay differential equation with periodic forcing and time-delayed feedback, captures key features of the El Ni\~no–Southern Oscillation (ENSO) phenomenon. Numerical studies of the GZT model have revealed stable period-one orbits under strong forcing and locked, quasiperiodic, or even chaotic regimes under weaker forcing, but its analytical treatment remains challenging. To bridge this gap, we propose a piecewise smooth version of the GZT model 
with piecewise constant delayed feedback and continuous periodic forcing.
For this piecewise smooth GZT model we {explicitly} construct 
solutions of initial value problems, and study the existence and properties of 
periodic orbits of period one. By studying the symmetries and possible phases of periodic solutions we are able to construct period-one solutions and the regions of parameter space in which they exist. We show that the stability of these orbits is governed by a linear mapping from which we find the Floquet multipliers for the periodic orbit and also the bifurcation curve along which these orbits lose stability. We show that for most values of the delay this occurs at a torus bifurcation, but that for small delays a fold  bifurcation of period one orbits occurs. 
We then compare these analytical results with numerical continuation of the GZT model, 
showing that they align very closely. }

\keywords{Time Delay Systems, Conceptual Climate Models, Periodic Forcing, Piecewise smooth systems, Stability and Bifurcations of Periodic Solutions, Phase Locking}

%%\pacs[JEL Classification]{D8, H51}

%%\pacs[MSC Classification]{35A01, 65L10, 65L12, 65L20, 65L70}

\maketitle

\section{Introduction}\label{sec1}
In this work we study the dynamics of the conceptual climate model
\be \label{eq:iGZT}
h'(t)=-\sign(h(t-\tau))+c\cos(2\pi t),
\ee
which we refer to as the \textit{piecewise smooth GZT} (psGZT) model. 
This model
is derived from the Ghil-Zaliapin-Thompson (GZT) model
\be \label{eq:GZT}
h'(t)=-\tanh({\kappa h(t-\tau)})+c\cos({2\pi t})
\ee
introduced in \cite{GZT2,GZT}. 

The GZT model \eqref{eq:GZT} has been extensively studied numerically \cite{GZT2,GZT,KKP15,KK18}.
Complicated dynamics arise, but over a relatively small range of parameters values. In \cite{KKP15}, features observed via numerical simulation in \cite{GZT2,GZT} are explained by using the continuation software \texttt{DDE-BifTool} \cite{sieber2014dde}.
The dynamics of the GZT model are strongly affected by the relative sizes of the seasonal forcing strength $c$ and the delay $\tau$. For large $c$ all stable invariant objects are period-one orbits, but for small $c>0$
quasi-periodic and chaotic solutions may arise.

Both the GZT model \eqref{eq:GZT} and the psGZT model \eqref{eq:iGZT} are constant delay, periodically forced delay differential equations (DDEs). 
In the GZT model \eqref{eq:GZT} the parameter $\kappa$ is usually taken to be large, indeed most authors consider $\kappa\geq11$
\cite{GZT,GZT2,KK18,KKP15,KKP16}. In the pointwise limit of large $\kappa$, the delayed feedback function becomes
\begin{equation}  
\label{eq:limit}
\lim_{\kappa\to\infty}\tanh{(\kappa h(\rho))}=\sign(h(\rho))\quad \forall \rho\in\mathbb{R},
\end{equation}
where we define $\sign(0) =0$.
Since with $\kappa=11$, the $\tanh$ function already resembles a step function,
this leads us to consider the limit \eqref{eq:limit} in the GZT model \eqref{eq:GZT}
and, hence, we study the psGZT model \eqref{eq:iGZT}.

In \cite{RKA20}, the dynamics of the signum forced psGZT model
\begin{equation} \label{eq:RKA}
h'(t)=-\sign(h(t-\tau))+c\sign(\cos(2\pi t))
\end{equation}
were studied.
In both \eqref{eq:iGZT} and \eqref{eq:RKA}, changing the delayed feedback term from $\tanh$ to $\sign$ is justifiable because the coupling strength $\kappa$ is usually considered to be large. However, the additional assumption in \eqref{eq:RKA} that the smooth seasonal periodic forcing can be replaced by a piecewise constant function does not follow from any parameter limit in the GZT model \eqref{eq:GZT} and is therefore questionable. Nonetheless, the beauty of this simplification is that it allowed solutions and bifurcations of \eqref{eq:RKA} to be constructed analytically in \cite{RKA20}.

The model \eqref{eq:GZT} was originally proposed in \cite{GZT} as a conceptual model of the the El Niño-Southern Oscillation (ENSO) climate phenomenon. The ENSO phenomenon is characterized by periodic fluctuations in sea surface temperatures (SSTs) and atmospheric pressure across the equatorial Pacific Ocean. ENSO primarily manifests in two phases: El Niño and La Niña. El Niño events are associated with warmer-than-average SSTs in the central and eastern Pacific, while La Niña events are characterized by cooler-than-average SSTs. There is also an atmospheric component to the phenomenon, where unusually high atmospheric pressure develops in the western Pacific during El Niño and low pressure during La Niña.

\begin{figure}
    \centering
    \includegraphics[width=\textwidth]{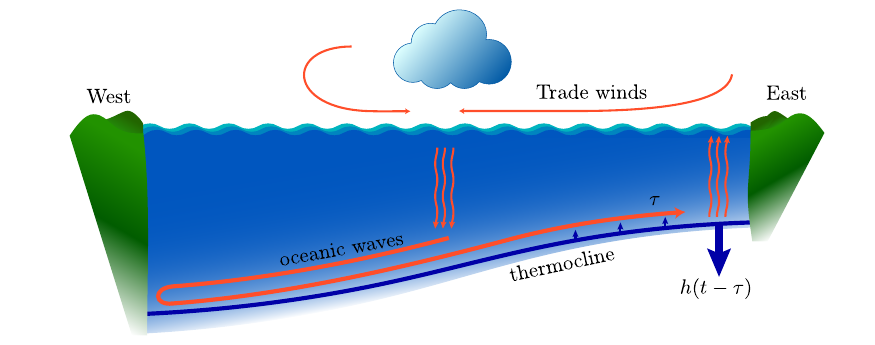}
    \caption{Schematic representation of the time-delayed negative feedback loop of the DAO paradigm.
    A positive perturbation $h$ of the thermocline depth near the eastern boundary at time $t-\tau$ leads to a weakening of the easterly trade winds, resulting in westerly anomalies. These anomalies induce a mass adjustment in the central Pacific, where it is argued that the atmosphere-ocean coupling is the strongest. Such a process generates westward-propagating Rossby waves. %\ssout{Upon reflection at the western boundary} \Sam{
    Upon interacting with the western boundary, these waves become Kelvin waves, which cool the SSTs and terminate the warm phase at time $t$. }
    \label{fig:SS88}
\end{figure}

An important physical feature in the context of ENSO is the thermocline, a thin layer separating warm surface waters from deep cold waters. This layer supports large-scale oceanic waves, such as Kelvin and Rossby waves, which have been used to explain the oscillatory nature of ENSO. The delayed action oscillator (DAO) paradigm
was first proposed and modelled in \cite{SuarezSchopf1988} and
states that the propagation of such oceanic waves forms a negative feedback process, which is delayed due to the finite velocity of the waves. The GZT model \eqref{eq:GZT} follows from this DAO philosophy, incorporating both the time-delayed negative feedback and an idealised representation of the seasonal forcing. The state variable $h(t)$, a scalar, represents the deviation of the thermocline depth in the eastern tropical Pacific Ocean from its mean value. 
A positive value of  $h$ indicates a deeper thermocline in the east, which reduces upwelling and thus warms the SSTs, and vice versa.
The delay $\tau$ represents the time taken for these waves to traverse the Pacific Ocean. This time-delayed negative feedback process is depicted in Figure~\ref{fig:SS88}.

Conceptual models play a fundamental role in climate science by isolating the key mechanisms that govern complex climate dynamics, while remaining amenable to mathematical analysis (see Section 2 of \cite{DIJKSTRA2024133984} for a discussion of the hierarchy of climate models). 
 DDEs form an especially powerful class of conceptual models, as they naturally represent time-delayed feedbacks intrinsic to many climate subsystems. The review \cite{keane2017climate} highlights how DDE-based climate conceptual models can be used to study a range of phenomena.

In the context of ENSO, several other DDE conceptual models have been proposed to capture different aspects of the delayed ocean–atmosphere coupling explained by the DAO.
For works related to the GZT model \eqref{eq:GZT}, see \cite{GZT2,GZT,KK18,KKP16}.
The Tziperman, Stone, Cane and Jarosh (TSCJ) model 
is an extension of the GZT model that incorporates an asymmetric coupling function and a positive delayed feedback \cite{KKP16, TSCJ2, TSCJ94}.
A state-dependent delay version of the GZT model has also been proposed in \cite{KKD19}.
There are other DAO models for ENSO, such as the Suarez–Schopf model \cite{SuarezSchopf1988, Courtney2019, boutle2007nino}, which contain different feedback mechanisms.
Moreover, DDE-based conceptual frameworks have been applied beyond ENSO, for example, to describe the feedback loops in the Atlantic Meridional Overturning Circulation (AMOC) \cite{wei2022simple}.

In the current work, we study \eqref{eq:iGZT} as an intermediate model between the smooth GZT model \eqref{eq:GZT} and the model \eqref{eq:RKA}, and show how to analytically construct periodic solutions of \eqref{eq:iGZT} with smooth periodic forcing. This allows us to study the bifurcation structures of \eqref{eq:iGZT} and to compare and contrast them with the analytical results obtained in \cite{RKA20} and with the numerical computations of solutions of the GZT model \eqref{eq:GZT} provided in \cite{KK18,KKP15}.

In Section~\ref{sec:ivp}, we establish existence, uniqueness, and boundedness of solutions to \eqref{eq:iGZT} when posed as an initial value problem (IVP) with a suitable initial function. We also show that all solutions oscillate about zero, and provide a closed-form expression for the solution of the IVP.
In Section~\ref{sec:po}, we discuss the symmetries of 
periodic solutions of \eqref{eq:iGZT} and show that all period one solutions must be symmetric (in the sense of \eqref{eq:sym}).
We construct period-one orbits with a single upward crossing of zero per period, and (in Theorems~\ref{thm:po} and~\ref{thm:po2}) determine the region of parameter space in which these orbits exist.
In Section~\ref{sec:stab11}, we analyze the stability of these periodic orbits. Stability is determined by how the zeros of a perturbation of the periodic orbit evolve over time, and in Proposition~\ref{prop:cEformulation} we derive a linearized map that describes this and enables us in Theorem~\ref{thm:Poly} to derive the characteristic polynomial that determines the Floquet multipiers of the periodic orbit. 
We use this (in Theorem~\ref{thm:stabc}) to identify curves of torus and fold bifurcations where the period-one orbits lose stability. 
Finally, in Section~\ref{sec:compare} we compare these curves with the bifurcation curves of the smooth GZT model \eqref{eq:GZT}, and the signum-forced psGZT model \eqref{eq:RKA}.

\section{Initial Value Problems}
\label{sec:ivp}

Consider the psGZT model \eqref{eq:iGZT} posed as
an initial value problem (IVP):
\begin{equation}
\label{eq:iGZTIVP}
    \begin{cases}
      h'(t)=-\sign(h(t-\tau))+c\cos(2\pi t), \quad t>t_0,\\
      h(t)=\phi(t-t_0),\quad t\in[t_0-\tau,t_0],
    \end{cases}
\end{equation}
where $\varphi:[-\tau,0]\to\mathbb{R}$ is continuous with isolated zeros.

There is a comprehensive theory of smooth constant delay DDEs \cite{BellmanCooke63,Hale88,HaleLunel93,Smith11},
but piecewise smooth periodically forced problems are less well studied, and here we will provide a direct proof of the existence, uniqueness and boundedness of solutions of \eqref{eq:iGZTIVP}.

We will consider a function $h(t)$ to be a solution of the IVP \eqref{eq:iGZTIVP} if it is continuous for all $t\geq t_0$, satisfies the initial condition for all $t\in[t_0-\tau,t_0]$, and is differentiable and satisfies the differential equation for almost all $t>t_0$.  
We will construct solutions of \eqref{eq:iGZTIVP} with isolated zeros, accordingly
we restrict our attention to continuous initial functions 
$\varphi$ whose zero set is finite.
Regarding $h(t)$ as a solution
in this sense, the following theorem shows that solutions are bounded and oscillate.

\begin{thm} \label{thm:ivp}
The IVP \eqref{eq:iGZTIVP} has a unique solution which is continuous, remains bounded for all $t\geq t_0$ and crosses the zero-line infinitely many times.
\end{thm}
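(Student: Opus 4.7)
The approach is the classical method of steps, exploiting the piecewise constant nature of the $\sign$ nonlinearity to reduce the DDE on each delay interval to an explicitly integrable non-autonomous ODE. Since $\varphi$ is continuous on the compact interval $[-\tau,0]$ with isolated zeros, its zero set is finite; on each of the finitely many open sub-intervals between consecutive zeros $\varphi$ is continuous and non-zero, so $\sign(\varphi(\cdot))$ is a constant $\pm 1$ there. Consequently, on $[t_0,t_0+\tau]$ the equation reads $h'(t)=s_j+c\cos(2\pi t)$ on each sub-interval with $s_j\in\{\pm 1\}$, which integrates explicitly to $h(t)=A_j+s_jt+\tfrac{c}{2\pi}\sin(2\pi t)$; matching the constants $A_j$ at the interior partition points together with $h(t_0)=\varphi(0)$ pins down a unique continuous solution on $[t_0,t_0+\tau]$, and uniqueness is automatic since any other continuous solution would satisfy the same family of ODEs and agree at the knots.

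To iterate past $t_0+\tau$ I need $h|_{[t_0,t_0+\tau]}$ to have finitely many, isolated zeros. This is immediate: each piece $A_j\pm t+\tfrac{c}{2\pi}\sin(2\pi t)$ is real analytic with non-vanishing linear coefficient, hence not identically zero, and so has finitely many zeros on its compact sub-interval. A straightforward induction on $k$ then extends $h$ uniquely to $[t_0,\infty)$ with the zero set locally finite on every bounded interval.

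Oscillation I prove by contradiction: if $h$ had only finitely many zeros there would exist $T$ with $h$ of strict fixed sign on $(T,\infty)$, say $h>0$; then for $t>T+\tau$ one has $\sign(h(t-\tau))=1$ almost everywhere, so $h'(t)=-1+c\cos(2\pi t)$ a.e., and integration yields $h(t)=h(T+\tau)-(t-T-\tau)+\tfrac{c}{2\pi}[\sin(2\pi t)-\sin(2\pi(T+\tau))]\to-\infty$, contradicting $h>0$; the $h<0$ case is symmetric. Boundedness then reduces to controlling the gap between successive zeros $z_n<z_{n+1}$: taking $h>0$ on the interior, either $z_{n+1}-z_n\leq\tau$ and we are done, or on $(z_n+\tau,z_{n+1})$ the feedback forces $h'=-1+c\cos(2\pi t)$, and integrating with $h(z_{n+1})=0$ together with the uniform Lipschitz bound $|h'|\leq 1+|c|$ (so that $h(z_n+\tau)\leq(1+|c|)\tau$) produces $z_{n+1}-z_n\leq(2+|c|)\tau+|c|/\pi$. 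The same Lipschitz estimate then caps $|h(t)|$ by $(1+|c|)\bigl[(2+|c|)\tau+|c|/\pi\bigr]$ for $t\geq z_1$, while continuity handles the compact initial interval.

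The step I expect to be most delicate is preserving the hypothesis that zeros of $h$ remain isolated at every stage of the induction, since if some piece of $h$ were identically zero on a sub-interval of positive length the $\sign(h(\cdot-\tau))$ term at the next step would no longer be piecewise constant and the explicit integration would break down. Analyticity together with the $\pm 1$ linear coefficient of every piece rules this out, but care is required to read off the correct one-sided sign of $h$ at every partition point so that $s_j\neq 0$ propagates cleanly through the induction.
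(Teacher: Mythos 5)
Your proposal is correct and follows essentially the same route as the paper: the method of steps for existence and uniqueness, a contradiction-by-integration argument for the infinitude of zero crossings, and a bound on the gap between successive zeros combined with the uniform Lipschitz bound $|h'|\leq 1+c$ for boundedness. Your explicit use of analyticity of each piece to keep the zero set locally finite through the induction is a slightly more careful treatment of a point the paper handles only in passing, but it does not change the structure of the argument.
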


\begin{proof}
We solve the IVP iteratively on successive intervals of length $\tau$. For $n \geq 1$, define the interval:  
\[
I_n = [t_0 + \tau(n-1), t_0 + \tau n].
\]
On each interval $I_n$, the delayed term $h(t - \tau)$ is determined by the solution on the previous interval $I_{n-1}$. This reduces the DDE to a sequence of non-autonomous ODEs.

For $t\in I_1$, the term $h(t - \tau)$ depends on values of $h$ before $t_0$: $h(t - \tau)$ is determined by the initial function $\varphi$. It follows that $h(t-\tau)=\varphi(t-t_0-\tau)$. Then,
\[
h(t) = \varphi(0) + \int_{0}^{t-{t_0}} -\text{sign}(\varphi(\rho - \tau)) + c \cos(2\pi (\rho+t_0)) d\rho.
\]
Because $\varphi$ is bounded and measurable, the integrand is also bounded and measurable. Therefore, the integral exists in the Lebesgue sense, and the solution $h(t)$ exists on $I_1$.

Existence  on $[t_0,\infty)$ follows by applying the method of steps on subsequent intervals (see \cite{Smith11}). Assuming the solution exists on $I_{n-1}$, we use induction to show it exists on $I_n$.

Uniqueness follows from the following observation: Let $h(t)$ and $g(t)$ be solutions of the IVP \eqref{eq:iGZTIVP} with a bounded initial function $\varphi$. For $t \in I_1$, we compute:
\begin{align*}
h(t)-g(t) &= \int_{t_0}^t h'(\rho) d\rho - \int_{t_0}^t g'(\rho) d\rho \\
&= \int_{t_0}^t -\text{sign}(h(\rho-\tau)) + c\cos(2\pi \rho) + \text{sign}(g(\rho-\tau)) - c\cos(2\pi \rho) d\rho \\
&= \int_{t_0}^t \text{sign}(g(\rho-\tau)) - \text{sign}(h(\rho-\tau)) d\rho \\
&= \int_{t_0-\tau}^{t-\tau} \text{sign}(\varphi(\rho-t_0)) - \text{sign}(\varphi(\rho-t_0)) d\rho = 0.
\end{align*}
It then follows by induction that
$h(t) = g(t)$ for all $t \in I_n$, ensuring uniqueness.

Because $|{h'(t)}|\leq1+c$, solutions cannot become unbounded in finite time.

It remains only to show that every solution crosses the zero-line infinitely many times, and to do this we first show that the solution crosses the zero-line at least once.
For contradiction, assume that $h(t)>0$ for almost all $t\geq t_0$. Then
\begin{align} \notag
h(t) & =h(t_0)+\int_{t_0}^{t}\hspace*{-0.5em}-\sign{(h(\rho-\tau))}+c \cos{(2\pi \rho)}d\rho \\ 
&=\varphi(0)+\int_{t_0-\tau}^{t-\tau}\hspace*{-1em}-\sign{(h(\rho))}d\rho+\frac{c}{2\pi}\left( \sin{(2\pi t)}-\sin{(2\pi t_0)} \right) \notag\\ 
&\mbox{}\hspace*{-1em}=\varphi(0)+\!\int_{t_0-\tau}^{t_0}\hspace*{-1.2em}-\sign{(\varphi(\rho-t_0))}d\rho+\!\int_{t_0}^{t-\tau}\hspace*{-1.4em}-\sign{(h(\rho))}d\rho+\frac{c}{2\pi}\left( \sin{(2\pi t)}-\sin{(2\pi t_0)} \right)\!. 
\label{eq:inftyzero_B}
\end{align}
Observing that $\varphi(t-t_0)$ is bounded over the compact set $t\in[t_0-\tau,t_0]$, we conclude that $-\sign{(\varphi(t-t_0))}\leq 1$. Thus, $\int_{t_0-\tau}^{t_0}-\sign{(\varphi(\rho-t_0))}d\rho\leq\tau$. Thus from \eqref{eq:inftyzero_B},
\begin{equation}
\label{eq:inftyzero_C}
h(t)  \leq  \varphi(0)+\tau+\int_{t_0}^{t-\tau}\hspace*{-1em}-\sign{(h(\rho))}d\rho+\frac{c}{\pi}.
\end{equation}
Because $h(t)>0$ for almost all $t\geq t_0$,
\begin{equation}
\label{eq:inftyzero_D}
0<h(t)  \leq \phi(0)+\tau+\frac{c}{\pi}+t_0-(t-\tau).
\end{equation}
For $t$ sufficiently large, \eqref{eq:inftyzero_D} contradicts that $h(t)$ is greater than zero for almost all $t_0\geq t$. Because the nonlinearity is piecewise constant and the forcing is sinusoidal, any zero where a solution does not change sign will be isolated. Hence, $h(t)$ must cross the zero line at least once. The case $h(t)<0$ for almost all $t\geq t_0$ is similar. 

Let $t=z_1$ be the first point for $t>t_0$ 
where the solution $h(t)$ changes sign, then apply the same argument for $t>z_1$ with 
initial function $\{h(t):t\leq z_1\}$, and repeat sequentially to find $z_l$ for all $l=1,2,\ldots$ where the solution changes sign.

 Applying \eqref{eq:inftyzero_D} with $\phi(0)=0$ shows that after the first zero subsequent zeros are separated by at most $2\tau+\tfrac{c}{2\pi}$ time units, and this together with the bound $|h'(t)|\leq 1+c$ ensures that $h(t)$ is uniformly bounded.
\end{proof}

We use the term \emph{nondegenerate zero} 
to denote
an isolated zero of $h(t)$ where the solution changes sign. We define the set of nondegenerate zeros of $h(t)$: 
\begin{equation}
\label{eq:Z}
\mbox{}\hspace*{-0.2em}
Z = \left\{ z \in \mathbb{R} \,\Bigg|\, h(z) = 0 \ \text{and} \ \exists\, \delta > 0 \ \text{such that} \ \begin{aligned} 
&h(z_-)h(z_+) < 0 \ \text{for all} \\ 
&z_- \in (z - \delta, z), \ z_+ \in (z, z + \delta) 
\end{aligned} \right\}\!.
\end{equation}

If $h(z)=0$ and
$h$ is differentiable at $t=z$, then $h'(z)\ne0$ implies that $z\in Z$, and thus is a sufficient (but not necessary) condition for the zero to be nondegenerate. However, because of discontinuities on the right-hand side of \eqref{eq:iGZT}, in general $h(t)$ will not be smooth. We will mostly construct periodic orbits with nondegenerate zeros but will be interested in periodic orbits with degenerate zeros that arise as boundary cases.

We define two subsets of $ Z $, which will be relevant later. Let $ Z^+ $ denote the subset of $ Z $ corresponding to upward crossings, that is, where $ z \in Z $ such that $ h(z_-) < 0 $.
%for $z_-\in[z-\epsilon,z)$. 
Similarly, $ Z^- =Z\backslash Z^+$ denotes the subset corresponding to downward crossings. Thus, 
%\Tony{for orbits with isolated zeros}
$ Z $ is the disjoint union of $ Z^- $ and $ Z^+ $. Moreover, for any two consecutive elements in $ Z $, they cannot both belong to the same subset.

Theorem~\ref{thm:ivp} states that the ordered sequence of nondegenerate zeroes $\{z_l\}_{l\in\mathbb{N}}\subset Z$ of a solution of the IVP problem \eqref{eq:iGZTIVP} is infinite. The piecewise nonlinearity in \eqref{eq:iGZTIVP} allows for the analytical construction of solutions of \eqref{eq:iGZTIVP} as the nonlinear negative delayed feedback term $\sign(h(t-\tau))$ is now either $-1$ or $+1$ over intervals of the form $t\in[z_l+\tau,z_{l+1}+\tau)$ where $z_l<z_{l+1}$ are successive nondegenerate zeroes.

In the absence of accumulation points within
the set of nondegenerate zeros, 
the ordered sequence $\{z_l\}_{l\in\mathbb{N}}$ diverges. Otherwise $z_\infty := \lim_{l\to\infty} z_l<\infty$ is the first accumulation point. 
The so-called \emph{oscillation frequency number} was first  introduced in
\cite{JMP88}, and later named and extended to non-smooth systems in 
\cite{Shustin1995}.
It is defined as the number of nondegenerate zeros occurring within a time interval of length equal to the delay and preceding a given nondegenerate zero of the solution. 
When accumulation points exist, this number is infinite.
The study of oscillation frequency has been carried out in the context of discontinuous delayed systems related to \eqref{eq:iGZT}, where the periodic forcing is replaced by a bounded term of magnitude less than one.
Shustin \cite{Shustin1995} showed that, when the forcing amplitude remains below one, the oscillation frequency is non-increasing and becomes constant after a finite time. Subsequent works \cite{akian:1998, fridman2002, nussbaum2001nonexpansive} extended these results to more general classes of delayed systems.
These results imply that any solution with an initial function containing a finite number of zeros has a finite oscillation frequency. Such results can be applied to the psGZT model \eqref{eq:iGZT} when the forcing strength $c$ is smaller than one. Specifically, if the initial function $\varphi$ has a finite oscillation frequency and $c < 1$, then the solution will not contain any accumulation points in its zero set. In the current work we will exclusively study orbits with isolated zeros.

In Theorem~\ref{thm:ivpsol} we find a closed-form expression for the solutions of the IVP \eqref{eq:iGZTIVP} with an initial function that does not change sign over $[-\tau,0)$. In the case where $h(t)>0$ for $t\in(t_0,z_1) $, $\sign{(h(t-\tau))}=1$ for $t\in(t_0+\tau,z_{1}+\tau)$. Thus,
\begin{equation}
\label{eqn:zerodistriutionhh}
    \sign{(h(t-\tau))}=(-1)^l\quad\textnormal{for almost all }t\in(z_l+\tau,z_{l+1}+\tau).
\end{equation}

\begin{thm} \label{thm:ivpsol}
Let $h(t)$ be a solution of the IVP \eqref{eq:iGZTIVP} with initial function $\phi:[-\tau,0]\to\mathbb{R}$ satisfying
\begin{enumerate}
    \item $\phi(0)=0$;
    \item $\phi(t)<0$ \textnormal{ and continuous for all } $ t\in[-\tau,0)$.
\end{enumerate}
Then, the unique solution to the IVP is
\begin{equation}
\label{eq:closedformexpression}  
h(t)=p(t)+\frac{c}{2\pi} (\sin{(2\pi t)} -\sin{(2\pi t_0)}),
\end{equation}
where $p(t)$ is a continuous piecewise linear function. 

Assuming furthermore that
\begin{equation}
    \label{eq:1+cos}
    1+c\cos{(2\pi t_0)}>0\quad\text{or}\quad1+c\cos{(2\pi t_0)}=0\ \text{and}\ \sin{(2\pi t_0)}<0.
\end{equation}
Then,
the term $p(t)$ is given by
\begin{equation}
\label{eq:p(t)}
p(t)= \begin{dcases*}
t-t_0, & {\normalfont if} $ t\in[t_0,t_0+\tau]$\\
2\tau-t+t_0, & {\normalfont if} $ t\in[t_0+\tau,z_{1}+\tau]$\\
-2z_1+t+t_0, & {\normalfont if} $ t\in[z_{1}+\tau,z_{2}+\tau]$\\
a_l\tau+2\sum_{m=1}^{l-1}(-1)^{m}z_m+(-1)^{l}t+t_0, & {\normalfont if} $ t\in[z_{l-1}+\tau,z_l+\tau]$\textnormal{ for }$l>2$\end{dcases*}
\end{equation}
where $\{z_l\}_{l\in\mathbb{N}}$ are the nondegenerate zeroes of $h(t)$ and  $a_l=2$ when $l$ is even and $a_l = 0$ when $l$ is odd.
\end{thm}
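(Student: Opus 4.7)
The plan is to decompose the solution as
$$h(t) = p(t) + \frac{c}{2\pi}\bigl(\sin(2\pi t) - \sin(2\pi t_0)\bigr),$$
separating the antiderivative of the smooth forcing (which contributes a fixed sinusoidal term vanishing at $t_0$) from the piecewise linear accumulation $p(t)$ produced by integrating $-\sign(h(t-\tau))$ with $p(t_0)=0$. Since $\sign(h(t-\tau))$ is piecewise constant with values $\pm 1$ on each interval between consecutive nondegenerate zeros of $h$, $p$ is automatically continuous and piecewise linear with slopes $\pm 1$; it therefore only remains to identify the slope on each successive interval and propagate the resulting linear pieces by continuity, using the same method of steps already invoked in Theorem~\ref{thm:ivp}.

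First, on $[t_0, t_0+\tau]$, hypothesis (2) gives $h(t-\tau) = \phi(t-t_0-\tau) < 0$, so $p'(t) = +1$ and $p(t) = t - t_0$ follows immediately from $p(t_0)=0$. Next, I would invoke \eqref{eq:1+cos} to show that $h$ is strictly positive immediately to the right of $t_0$: in the first case $h'(t_0^+) = 1 + c\cos(2\pi t_0) > 0$ finishes it, while in the borderline case $h'(t_0^+)=0$ but a short Taylor-expansion argument using $h''(t_0^+) = -2\pi c\sin(2\pi t_0) > 0$ handles the degenerate scenario. This legitimises taking $z_1$ as the first nondegenerate zero of $h$ after $t_0$ and applying the sign identity \eqref{eqn:zerodistriutionhh}. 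On $[t_0+\tau, z_1+\tau]$ we then have $\sign(h(t-\tau)) = +1$, so $p'(t) = -1$; integrating and matching to the continuity value $p(t_0+\tau) = \tau$ yields $p(t) = 2\tau - t + t_0$.

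The remainder is an induction on $l \geq 2$. On $[z_{l-1}+\tau, z_l+\tau]$, equation \eqref{eqn:zerodistriutionhh} gives $\sign(h(t-\tau)) = (-1)^{l-1}$, so $p'(t) = (-1)^l$, and integration from the inductive hypothesis evaluated at $t = z_{l-1}+\tau$ produces $p(t) = p(z_{l-1}+\tau) + (-1)^l(t - z_{l-1} - \tau)$. Substituting the claimed formula for $p(z_{l-1}+\tau)$ and simplifying yields a recurrence between successive values of $a_l$ and shows that the new term $(-1)^{l-1}z_{l-1}$ enters the running sum with the correct sign. I expect the main obstacle to be purely this bookkeeping: carefully tracking how the jump in $p'$ at each $z_{l-1}+\tau$ contributes a $\pm 2\tau$ correction that telescopes into the alternating sum $2\sum_{m=1}^{l-1}(-1)^m z_m$, and verifying that the constant $a_l\tau$ alternates correctly with the parity of $l$, with the initial conditions fixed by the explicit base cases computed above.
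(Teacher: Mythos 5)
Your proposal is correct and follows essentially the same route as the paper's proof: the same decomposition into a sinusoidal part plus a piecewise linear $p(t)$ with slopes $\pm1$, the same use of \eqref{eq:1+cos} (including the borderline case via $h'(t_0^+)=0$, $h''(t_0^+)>0$) to certify that $t_0$ is a nondegenerate zero, and the same induction on the intervals $[z_{l-1}+\tau,z_l+\tau]$ using \eqref{eqn:zerodistriutionhh} and continuity matching at the endpoints. No substantive differences.
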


\begin{proof}
From the proof of Theorem~\ref{thm:ivp}, we find that for any $t>t_0$,
\begin{equation*}
%\label{eqn:h_0_to_tau}
h(t) =  p(t)+\int_{t_0}^t c \cos(2\pi\rho) d\rho=p(t)+\frac{c}{2\pi} (\sin{(2\pi t)} -\sin{(2\pi t_0)}),
\end{equation*}
where $p(t)$ is a piecewise linear term given by
\[
p(t) = 
\begin{dcases}
 \int_{t_0-\tau}^{t - \tau}\hspace*{-1em}-\sign(\varphi(\rho-t_0)) \, d\rho, & \text{for } t \in [t_0, t_0 + \tau] \\
\int_{-\tau}^{0}\hspace*{-0.5em}-\sign(\varphi(\rho))\, d\rho
+ \int_{t_0}^{t - \tau}\hspace*{-1em}-\sign{(h(\rho))} \, d\rho, & \text{for } t > t_0+\tau.
\end{dcases}
\]
In particular, because $\varphi({t})<0$ for all $t\in[-\tau,0)$, it follows that
\[p(t)=\int_{t_0-\tau}^{t - \tau} -\sign(\varphi(\rho-t_0)) \, d\rho=\int_{t_0-\tau}^{t - \tau} +1 \, d\rho=t-t_0, \quad
\forall t \in [t_0, t_0 + \tau].
\]

Next, we show that condition \eqref{eq:1+cos} ensures that \( t_0 \) is a nondegenerate zero of \( h(t) \). 
To see this, first note that \( h(t) = \phi(t-t_0) < 0 \) for all \( t \in [t_0-\tau,t_0) \). Then 
%observe that since \( \varphi(t) < 0 \) for all \( t \in [-\tau, 0) \), 
the right derivative of \( h(t) \) at \( t_0 \) is given by 
\[ h'(t_0^+)=-\sign(\phi(-\tau))+ c \cos(2\pi t_0)=1 + c \cos(2\pi t_0). \]
Hence, the first condition in \eqref{eq:1+cos} ensures that
$h'(t_0^+)>0$, while the second condition in \eqref{eq:1+cos}
is equivalent to $h'(t_0^+)=0$ and $h''(t_0^+)>0$. 
Since, from above, \( h(t) \) is smooth for \( t\in(t_0,t_0+\tau)\),
in both cases, there exists \( \delta > 0 \) such that \( h(t) > 0 \) for all \( t \in (t_0, t_0 + \delta) \). 
%Since \( \varphi(t - t_0) = h(t) \) and \( \varphi(t) < 0 \) for \( t \in [-\tau, 0) \), 
It follows that \( t_0 \) is indeed a nondegenerate zero of \( h(t) \).

We have shown that the solution satisfies \eqref{eq:closedformexpression}
and \eqref{eq:p(t)} for $t \leq t_0+\tau$. 
We complete the result by induction.
Suppose the solution satisfies \eqref{eq:closedformexpression}
and \eqref{eq:p(t)} for $t \leq z_l+\tau$. 
Then for $t\in[z_{l}+\tau,z_{l+1}+\tau]$,
\begin{equation}
\label{eqn:h_0_to_tau_2}
h(t)=h(z_{l}+\tau)+\int_{z_{l}+\tau}^{t}-\sign{(h(\rho-\tau))}+c\cos{(2\pi \rho)}d\rho.
\end{equation}

It is important to realise that $t\in(z_{l}+\tau,z_{l+1}+\tau)$ implies
$t-\tau\in(z_{l},z_{l+1})$, and since the endpoints of this interval are defined by successive nondegenerate zeros, $\sign(h(t-\tau))$ is constant
for $t\in(z_{l}+\tau,z_{l+1}+\tau)$. Moreover, since $h(t)>0$ for $t\in(t_0,z_1)$, the solution satisfies \eqref{eqn:zerodistriutionhh}.
Thus we can evaluate the integral in \eqref{eqn:h_0_to_tau_2} yielding
\begin{equation*}
%\label{eqn:h_0_to_tau_2_a}
h(t)=h(z_{l}+\tau)+(-1)^{l+1} (t-\tau-z_l)+\frac{c}{2\pi}(\sin{(2\pi t)}-\sin{(2\pi (z_{l}+\tau))}).
\end{equation*}
Using the expression for $h(z_{l}+\tau)$ from \eqref{eq:closedformexpression} gives
\begin{equation}
\label{eqn:h_0_to_tau_2_a1}
h(t)=p(z_{l}+\tau)+(-1)^{l+1}(t-\tau-z_l)+\frac{c}{2\pi}(\sin{(2\pi t)}-\sin{(2\pi t_0)}).
\end{equation}
Let 
\begin{equation}
\label{eqn:indq}
q(t)=p(z_{l}+\tau)+(-1)^{l+1}(t-\tau-z_l).
\end{equation}
The induction step is complete if we show that $p(t)=q(t)$ for $t\in[z_{l}+\tau,z_{l+1}+\tau]$.
Using the expression for $p(z_{l}+\tau)$ from \eqref{eq:p(t)} gives
\begin{equation}
\label{eqn:indq2}
p(z_{l}+\tau)=(1+(-1)^{l-1})\tau+2\sum_{m=1}^{l-1}(-1)^{m}z_m
{+(-1)^l(z_l+\tau)+t_0}.
\end{equation}
Substituting this into \eqref{eqn:indq} and simplifying gives
\[q(t)=(1+(-1)^{l})\tau+2\sum_{j=1}^{l}(-1)^{m}z_m+(-1)^{l+1} t+t_0,\]
where we note that
\[(1+(-1)^{l-1})\tau+(-1)^l\tau-(-1)^{l+1}\tau=(1+(-1)^l)\tau=a_{l+1}\tau,\]
since $(-1)^{l-1}=(-1)^{l+1}$.
Then $q(t)=p(t)$ so the solution 
satisfies \eqref{eq:closedformexpression}
and \eqref{eq:p(t)} for $t \leq z_{l+1}+\tau$. 
This completes the induction.

The value of $p(z_l+\tau)$ is defined twice for each $l$ in \eqref{eq:p(t)}; it is easy to verify that these values agree so $p(t)$ is well-defined and continuous.
\end{proof}

If there is no accumulation point of the nondegenerate zeros then 
\eqref{eq:closedformexpression} and \eqref{eq:p(t)} define the solution $h(t)$ for all $t\geq t_0$. On the other hand, if
there is an accumulation point $z_\infty$ of the set of nondegenerate zeros $\{z_l\}_{l\in\mathbb{N}}$, the expressions
\eqref{eq:closedformexpression} and \eqref{eq:p(t)} remain valid for all $t$ in the interval $ [t_0, z_\infty + \tau) $, and cannot be extended beyond.

\begin{figure}
    \centering
    \includegraphics[width=\textwidth]{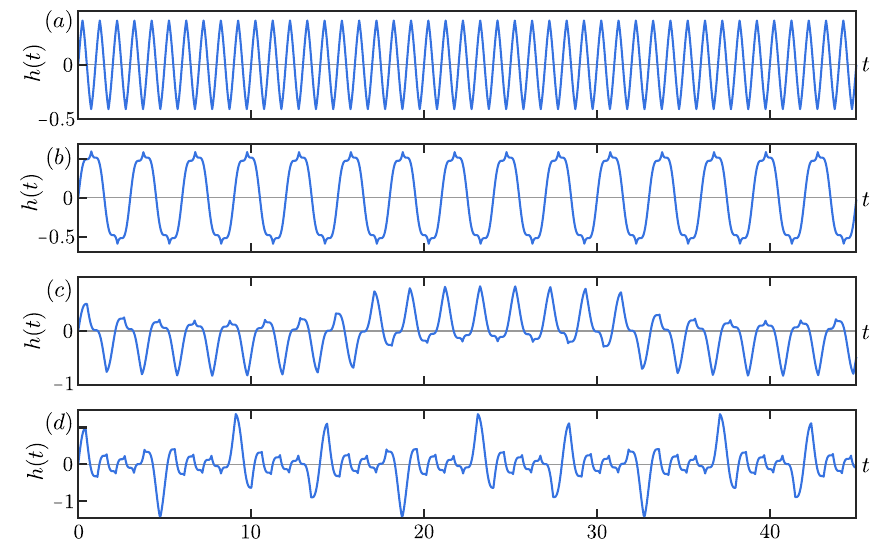}
    \caption{Solutions defined by \eqref{eq:closedformexpression} and \eqref{eq:p(t)} of the {psGZT} IVP \eqref{eq:iGZTIVP} with $c=1$ and (a) $\tau=0.25$, (b) $\tau=0.76$, (c) $\tau=0.508$ and (d) $\tau=0.42$. }
    \label{fig:ivpsol}
\end{figure}

We can evaluate \eqref{eq:closedformexpression} and \eqref{eq:p(t)}  in \texttt{MATLAB} \cite{MATLAB2024b} for an arbitrarily $t_0$ satisfying \eqref{eq:1+cos},
with the zeros calculated using \texttt{fzero}. Figure~\ref{fig:ivpsol} shows four trajectories {of the psGZT model \eqref{eq:iGZT}} 
with $c=1$ and different values of $\tau$. 
Figure~\ref{fig:ivpsol}(a) shows a periodic solution with period 1 dominated by seasonal forcing, while
(b)  depicts a trajectory converging toward a period-three orbit. 
In (c), we observe interdecadal variability in the mean amplitude of minima and maxima, while (d) shows a solution pattern that loses regularity, displaying large minima and maxima at irregular intervals ranging from three to seven years.

The solutions shown in 
Figure~\ref{fig:ivpsol}(c-d) 
use the same parameter values as those used in Figures~2(e) and~2(c) of \cite{GZT} 
for the smooth GZT model \eqref{eq:GZT} with $\kappa=50$.
The trajectories in Figure~\ref{fig:ivpsol}(c-d) for the psGZT model
closely match those reported in \cite{GZT} for the smooth GZT model, demonstrating a similarity between the solutions of 
these two models. After studying the properties of periodic orbits of the psGZT model \eqref{eq:iGZT} in Section~\ref{sec:po}, in Section~\ref{sec:compare} we will compare the bifurcations of these models.

\section{Periodic Orbits}
\label{sec:po}

Because the forcing in the psGZT model \eqref{eq:iGZT} has period one, any periodic orbit must have integer period $n\in\N$. In this work, the period $n$ is always the prime period. We are interested in both the case where the seasonal forcing dominates, resulting in stable period-one orbits, and in the loss of stability of these orbits leading to higher-period orbits and more complicated dynamics. In this section, we will study periodic orbits with odd period, and finitely many zeros. 
In Section~\ref{section:sym}, we show that certain solutions respect a symmetry property: $h(t)\mapsto -h(t+n/2)$, where $h$ satisfies \eqref{eq:iGZT} and $n$ is odd. An even stronger symmetry property relating solutions across the parameter space is also derived in Theorem~\ref{thm:sym2} and Corollary~\ref{cor:sym2}. Finally, Theorem~\ref{thm:po_closedform}
provides a closed-form expression for candidate 
period one orbits with one upward crossing of zero per period, which we will refer to as 1:1-period one orbits.

In Section~\ref{section:sym11}, we derive the
region of existence in parameter space
for these orbits. First in
Section~\ref{section:parameterconstraintsforperiodicorbits}  we derive parameter constraints for these orbits, 
before determining the existence region in
$(\theta,c)$ parameter space of 1:1 periond-one orbits
of \eqref{eq:iGZTth}
with phase $\alpha\in[-\tfrac14,\tfrac14]$
in 
Section~\ref{sec:existsym11pos14} 
and with phase $\alpha\in[\tfrac14,\tfrac34]$
in 
Section~\ref{section:existencesymmetric1434}.

\subsection{Properties of Periodic Orbits}
\label{section:sym}
To build intuition for the symmetries of \eqref{eq:iGZT}, 
we begin with the autonomous case with $c=0$, and consider the well-known autonomous system
\begin{equation}
 \label{eq:fautonomous}
    h'(t) = -f(h(t - \tau)),
\end{equation}
for some odd and continuous or piecewise continuous function $f:\mathbb{R}\to\mathbb{R}$. 

Setting the delay to $\tau = 1$, equation 
\eqref{eq:fautonomous} admits a period-4 solution that satisfies the symmetry $h(t) =-h(t + 2)$. This solution is sometimes referred to as the Kaplan and Yorke solution of \eqref{eq:fautonomous}, named after the pioneering work by Kaplan and Yorke \cite{kaplan1974ordinary}. 
Such a periodic orbit defines a slowly oscillating periodic solution, characterized by oscillations for which the time between zeros is greater than the delay interval. 
See \cite{cao1996uniqueness,chow1988characteristic,nussbaum1979uniqueness,kennedy2015multiple}
for other results concerning periodic solutions in \eqref{eq:fautonomous}.

When $f$ is given by the $\sign$ function, so that 
\eqref{eq:fautonomous} corresponds to \eqref{eq:iGZT} with $c=0$, the system admits a solution $h_0$ of period $4$:
\[h_0(t)=\begin{cases}
      t,\quad\quad\,-1\leq t\leq 1,\\
      2-t,\quad\, 1 \leq t\leq 3,
    \end{cases} \qquad h_0(t)=h_0(t+4k),\, k\in\mathbb{Z}.\]
Such an orbit has jump discontinuities in $h'_0$ whenever $t=\pm1+4k$, corresponding to nondegenerate zeros of $h_0(t-1)$. Aside from the orbit $h_0$, there is a countable set of periodic orbits $h_1,h_2,\dots$ with period $4/(4n+1)$, $n\geq 1$, given by
\[h_n(t)=\frac{1}{4n+1}h_0((4n+1)t),\quad t\in\mathbb{R}.\]
The solution $h_0(t)$ is stable, while all of the solutions $h_n(t)$ for $n\geq1$ are unstable \cite{fridman2002}. Any solution originating from an initial function with a finite number of zeros, is known to be 
%eventually equivalent, up to a time shift, 
equal, up to a time shift, after finite time
to a solution of the form $h_n$ \cite{fridman2002,Shustin1995}.

Next, we add the periodic forcing to \eqref{eq:fautonomous}, yielding the DDE
\be \label{eq:GZTgenf}
h'(t)=-f(h(t-\tau))+c\cos(2\pi t).
\ee
Notice that both the GZT model \eqref{eq:GZT} and the psGZT model \eqref{eq:iGZT} are of the form \eqref{eq:GZTgenf}.

The following theorem demonstrates that \eqref{eq:GZTgenf} possesses symmetrically related orbits.

\begin{thm} \label{thm:sym}
If $h(t)$ solves \eqref{eq:GZTgenf}, so does $g(t)=-h(t+n/2)$ for any odd integer $n$.
\end{thm}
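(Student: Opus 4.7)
The plan is to verify directly that $g(t) = -h(t+n/2)$ satisfies the DDE \eqref{eq:GZTgenf} whenever $h(t)$ does, by differentiating $g$ and substituting the equation for $h$ at the shifted time. The two ingredients that make the symmetry work are the oddness of $f$ (which is assumed in the setup around \eqref{eq:fautonomous}, and which holds for both $\tanh(\kappa\,\cdot)$ in \eqref{eq:GZT} and $\sign$ in \eqref{eq:iGZT}), together with the half-integer shift $n/2$ with $n$ odd, which flips the sign of the cosine forcing.

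Concretely, I would first compute
\[
g'(t) = -h'(t+n/2) = f\bigl(h(t+n/2-\tau)\bigr) - c\cos\bigl(2\pi(t+n/2)\bigr),
\]
using that $h$ solves \eqref{eq:GZTgenf}. Next I would note that since $n$ is odd, $2\pi(t+n/2) = 2\pi t + n\pi$ and hence $\cos(2\pi(t+n/2)) = -\cos(2\pi t)$, giving
\[
g'(t) = f\bigl(h(t+n/2-\tau)\bigr) + c\cos(2\pi t).
\]
Then I would rewrite the delayed term in terms of $g$: from $g(s) = -h(s+n/2)$ we have $h(t-\tau+n/2) = -g(t-\tau)$, so that by oddness of $f$,
\[
f\bigl(h(t+n/2-\tau)\bigr) = f\bigl(-g(t-\tau)\bigr) = -f\bigl(g(t-\tau)\bigr).
\]
Substituting this into the previous display yields exactly $g'(t) = -f(g(t-\tau)) + c\cos(2\pi t)$, which is \eqref{eq:GZTgenf} for $g$.

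Since the verification is a direct substitution, there is no real obstacle; the only point requiring a brief comment is that \eqref{eq:iGZT} uses the piecewise smooth $f = \sign$, so the identity $f(-x) = -f(x)$ must be read in the almost-everywhere sense consistent with the notion of solution introduced in Section~\ref{sec:ivp} (recalling the convention $\sign(0)=0$). With that remark the argument goes through uniformly for both the smooth GZT model \eqref{eq:GZT} and the psGZT model \eqref{eq:iGZT}, establishing the claim.
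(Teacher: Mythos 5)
Your proposal is correct and follows essentially the same argument as the paper: a direct substitution using the oddness of $f$ and the identity $\cos(2\pi(t+n/2))=-\cos(2\pi t)$ for $n$ odd; the paper merely writes the chain of equalities starting from $-f(g(t-\tau))+c\cos(2\pi t)$ and ending at $g'(t)$, whereas you go in the reverse direction. Your extra remark about reading $f(-x)=-f(x)$ in the almost-everywhere sense for $f=\sign$ is a reasonable clarification consistent with the paper's notion of solution.
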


\begin{proof}
Let $\xi=t+n/2$ and let $g(t)=-h(t+n/2)$ with $n$ odd.
Then,
\begin{align*} 
-f(g(t-\tau)) + c\cos(2\pi t)
&=-f(-h(\xi-\tau)) + c\cos(2\pi(\xi-n/2))\\
&=f(h(\xi-\tau)) - c\cos(2\pi\xi)=-h'(\xi)=g'(t). 
\end{align*} 

\vspace*{-3.5ex}
\end{proof}

As well as using the oddness of $f$, note that it is also essential for $n$ to be odd for the cosine term to have the correct sign in the proof of Theorem~\ref{thm:sym}. The theorem gives rise to two possibilities for $n$ odd. Each solution $h(t)$ of \eqref{eq:GZTgenf}
either exists as one half of a pair of solutions related by this symmetry, or the solution is itself symmetric with
\be
\label{eq:sym}
h(t)=-h(t+n/2).
\ee
However, any orbit that satisfies \eqref{eq:sym} for some $n$ odd is a periodic orbit of period $n$, since applying the symmetry twice gives
$$h(t)=-h(t+n/2)=(-1)^2h((t+n/2)+n/2))=h(t+n).$$

A result analogous to Theorem~\ref{thm:sym} can be established when the integer $n$ is even: if $h(t)$ is a solution of \eqref{eq:iGZT}, then $g(t) = -h(t + n/4)$ is also a solution. Despite this, for $n$ even it is not possible to have period-$n$ solutions with the symmetry $h(t)=-h(t+n/4)$ because by a similar argument it then follows that $h(t)=h(t+n/2)$, contradicting the assumption that $n$ is the prime period. Thus, any such periodic orbits occur in pairs. This symmetry explains the bistability observed in some resonance regions in \cite{KKP15} for the smooth GZT model \eqref{eq:GZT}.

We are interested in periodic orbits of \eqref{eq:iGZT}, with finitely many zeros. We will refer to such a periodic orbit as \emph{symmetric} if it has odd period $n$ and satisfies the symmetry \eqref{eq:sym}.
Following \cite{RKA20}, we further characterise period $n$ orbits by the number $s$ of (nondegenerate) upward crossings of zero which occur each period. In the current work, we will mainly be interested in periodic orbits dominated by seasonal forcing, which will have period $n=1$ with $s=1$ upward crossing per period, and, using the notation of \cite{RKA20}, we refer to such an orbit as a symmetric 1:1-periodic orbit. 
In the companion paper \cite{GZTn} we
will study symmetric $n$:$s$-periodic orbits for general odd $n$. 
It is essential not to confuse the notation used here with the notation typically used for a $p$:$q$-locked orbit. Phase locked periodic orbits will arise later in this paper, and will be denoted by $(p, q)$, to avoid confusion with the symmetric $n$:$s$-periodic orbits.

For an orbit of period $n$ of \eqref{eq:iGZT} we have
$$
0  =h(n)-h(0)=\!\int_0^n \! h'(t)dt
=\!\int_0^n \!\!\!-\sign(h(t-\tau))+c\cos(2\pi t)dt=-\!\int_0^n\! \!\sign(h(t-\tau))dt.
$$

Thus, since the zeros of the periodic orbit have measure $0$ on the real line, it follows that $\sign(h(t))$ is positive on exactly half of the orbit, and negative on the other half. For an $n$:1-periodic orbit we define the phase $\alpha\in\R$ by the location of the nondegenerate upward crossing of zero, so
\be\label{eq:halpha}
\alpha\in Z^+,
\ee
where $Z$ is defined by \eqref{eq:Z}. Then we must have
\be\label{eq:halphan}
\alpha+n\in Z^+\ \text{and}\ \alpha+\frac{n}{2}\in Z^-.\ee
This implies that, for such $n$,
\be \label{eq:hsigns}
\left\{
\begin{array}{ll}
h(t)>0, & \textit{a.e.~on } t\in(\alpha,\alpha+n/2),\\
h(t)<0, & \textit{a.e.~on } t\in(\alpha+n/2,\alpha+n),
\end{array} \right.
\ee
with
\be \label{eq:hzeros}
h(\alpha)=h(\alpha+n/2)=h(\alpha+n)=0.
\ee
Note that 
$h$ being strictly positive or negative on the two intervals in \eqref{eq:hsigns} only holds almost everywhere because of the possibility of isolated degenerate zeros, which we will encounter later in boundary cases.

For $n$ odd, $n$:1-periodic orbits of \eqref{eq:iGZT} are symmetric, as the following result shows.

\begin{prop}    \label{prop:n:1symmetric}
    Every $n$:1-periodic orbit of \eqref{eq:iGZT} with $n$ odd is symmetric. 
\end{prop}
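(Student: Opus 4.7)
The plan is to combine Theorem~\ref{thm:sym} with a uniqueness-style argument: because an $n$:1-periodic orbit is determined by its sign pattern (via the right-hand side of \eqref{eq:iGZT}), the orbit and its symmetric image must coincide.

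Let $h$ be an $n$:1-periodic orbit of \eqref{eq:iGZT} with $n$ odd, phase $\alpha \in Z^+$, and define $g(t) = -h(t+n/2)$. Since $n$ is odd, Theorem~\ref{thm:sym} applies and $g$ is also a solution of \eqref{eq:iGZT}. The goal is to show $g \equiv h$, which is exactly the symmetry \eqref{eq:sym}.

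The first main step is to identify the zeros and sign structure of $g$. By \eqref{eq:halphan} and $n$-periodicity of $h$, the upward crossings of $h$ are located at $\alpha + kn$ and its downward crossings at $\alpha + n/2 + kn$ for $k \in \Z$, and \eqref{eq:hsigns}--\eqref{eq:hzeros} describe the sign of $h$ between these crossings. A direct calculation using $g(t) = -h(t+n/2)$ shows that $g$ vanishes precisely at the points $\alpha - n/2 + kn$ (downward crossings of $g$) and $\alpha + kn$ (upward crossings of $g$). Because $n$ is odd these coincide, as sets, with the zeros of $h$, and the type of crossing matches. Consequently $g$ is also an $n$:1-periodic orbit with the same phase $\alpha$, and in particular
\[
\sign(g(t)) = \sign(h(t)) \quad \text{for almost all } t\in\R.
\]

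The second main step is to convert this into equality. Substituting $h$ and $g$ into \eqref{eq:iGZT} and subtracting,
\[
h'(t) - g'(t) = \sign(g(t-\tau)) - \sign(h(t-\tau)) = 0 \qquad \text{a.e.\ } t \in \R.
\]
Since $h$ and $g$ are both continuous, $h - g$ is constant. Evaluating at $t = \alpha$ gives $h(\alpha) - g(\alpha) = 0 - (-h(\alpha+n/2)) = 0$ by \eqref{eq:hzeros}, so the constant is zero and $h = g$, establishing \eqref{eq:sym}.

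I do not anticipate a serious obstacle; the only delicate part is the bookkeeping in the first step, where one must check that the zeros of $g$ inherited from $h$ via the shift by $n/2$ and sign flip coincide, as an ordered set with crossing types, with those of $h$ itself — this is where the oddness of $n$ is used a second time (once in Theorem~\ref{thm:sym} and once here to match upward crossings with upward crossings modulo $n$).
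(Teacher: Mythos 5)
Your proof is correct and follows essentially the same route as the paper's (very terse) proof: oddness of $n$ flips the cosine, \eqref{eq:hsigns} flips the $\sign$ term, and \eqref{eq:hzeros} fixes the constant of integration, so $g(t)=-h(t+n/2)$ and $h(t)$ obey the same forced equation and agree at $t=\alpha$. One small quibble with your closing remark: matching upward crossings of $g$ with upward crossings of $h$ follows from the shift by $n/2$ combined with the sign flip alone (consecutive zeros of $h$ are spaced $n/2$ apart with alternating crossing type), so oddness of $n$ is not used a second time there --- it enters only through the cosine in Theorem~\ref{thm:sym}.
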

\begin{proof}
    When $n$ is odd, $c\cos(2\pi(t+n/2))=-c\cos(2\pi(t))$, 
    and 
it follows immediately from \eqref{eq:hsigns} and \eqref{eq:hzeros}
that \eqref{eq:sym} is satisfied.
\end{proof}

When $h(t)$ is continuously differentiable at $t=\alpha$, the condition
\be \label{eq:halpha2}
h(\alpha)=0, \quad\text{and}\quad h'(\alpha)>0,
\ee
clearly implies that $ \alpha \in Z^+ $. 
However, because we are also interested in edge cases where $h$ is either not continuously differentiable at $t = \alpha$ or where $ h'(\alpha) = 0 $ while $ \alpha $  is still a nondegenerate zero of $h$, we use the more general condition \eqref{eq:halpha}.

Writing the delay $\tau$ as
$\tau=k+\theta$ for a non-negative integer $k$ and $\theta\in[0,1)$, for a periodic orbit of period 1 it follows that $h(t-\tau)=h(t-k-\theta)=h(t-\theta)$. Thus a 1:1-periodic orbit for \eqref{eq:iGZT} is also a
1:1-periodic orbit for
\be \label{eq:iGZTth}
h'(t)=-\sign(h(t-\theta))+c\cos(2\pi t).
\ee

We now relate the phase $\alpha$ of a symmetric 1:1-orbit to its forcing strength $c$ and the parameter $\theta$.
For convenience, we introduce the functions
\be \label{eq:uv}
{u}(\theta):=\pi\left(2\theta-\frac{1}{2}\right)\quad\text{and}\quad {v}(\theta):=\pi\left(\frac{3}{2}-2\theta\right),
\ee
where $u(\theta)=v(1-\theta)$.

\begin{prop}
\label{prop:alpha01}
The phase $\alpha\in\mathbb{R}$ of a symmetric 1:1-periodic solution satisfies
\be \label{eq:alpha0}
c\sin(2\pi\alpha)={u(\theta)}, %=\pi\left(2\theta-\frac{1}{2}\right),
\quad\text{if}\quad\theta\in[0,\tfrac{1}{2}),
\ee
and
\be \label{eq:alpha1}
c\sin(2\pi\alpha)={v(\theta)},%=\pi\left(\frac{3}{2}-2\theta\right)
\quad\text{if}\quad\theta\in[\tfrac{1}{2},1).
\ee
\end{prop}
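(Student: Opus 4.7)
The plan is to integrate the differential equation \eqref{eq:iGZTth} over one half-period of the symmetric 1:1-orbit, using the fact that $h(\alpha)=h(\alpha+1/2)=0$ by \eqref{eq:hzeros}. This turns the assertion into a computation of two integrals: a sinusoidal one which is immediate, and an integral of a piecewise constant signum term which must be split in two distinct ways depending on whether $\theta<1/2$ or $\theta\geq 1/2$.

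Concretely, I would start from
\[
0=h(\alpha+\tfrac12)-h(\alpha)=\int_{\alpha}^{\alpha+1/2}\!\!\bigl(-\sign(h(t-\theta))+c\cos(2\pi t)\bigr)\,dt,
\]
and evaluate the forcing contribution using $\sin(2\pi(\alpha+\tfrac12))=-\sin(2\pi\alpha)$, which gives exactly $-\tfrac{c}{\pi}\sin(2\pi\alpha)$. The relation \eqref{eq:alpha0} or \eqref{eq:alpha1} would then emerge after identifying the value of $\int_\alpha^{\alpha+1/2}\sign(h(t-\theta))\,dt$ in each case.

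To evaluate that remaining integral, I would combine \eqref{eq:hsigns} with the period-$1$ periodicity of $h$ to locate the zeros of $t\mapsto h(t-\theta)$ inside $[\alpha,\alpha+1/2]$. For $\theta\in[0,1/2)$ the unique sign change occurs at $t=\alpha+\theta$, with $h(t-\theta)<0$ on the left and $h(t-\theta)>0$ on the right, producing $\int_\alpha^{\alpha+1/2}\sign(h(t-\theta))\,dt=(-\theta)+(1/2-\theta)=1/2-2\theta$, which rearranges to $c\sin(2\pi\alpha)=\pi(2\theta-1/2)=u(\theta)$. For $\theta\in[1/2,1)$ the delay is large enough that the relevant sign change of $h(t-\theta)$ inside $[\alpha,\alpha+1/2]$ happens at $t=\alpha-1/2+\theta$, where $h(t-\theta)$ passes from positive to negative (using that $h>0$ on $(\alpha-1,\alpha-1/2)$ and $h<0$ on $(\alpha-1/2,\alpha)$ by periodicity). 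The integral then evaluates to $(\theta-1/2)-(1-\theta)=2\theta-3/2$, giving $c\sin(2\pi\alpha)=\pi(3/2-2\theta)=v(\theta)$.

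There is no real obstacle beyond being careful about the locations of zeros of $h(t-\theta)$ relative to the interval $[\alpha,\alpha+1/2]$ and the measure-zero caveat in \eqref{eq:hsigns}; the latter is harmless because degenerate zeros are isolated and do not affect the Lebesgue integrals. The main thing to check cleanly is that in each of the two $\theta$-ranges, the sign of $h(t-\theta)$ is determined on the entire interval of integration by a single interior switching time, so that the piecewise integration gives exactly one value. Once this bookkeeping is done in both cases, \eqref{eq:alpha0} and \eqref{eq:alpha1} follow directly.
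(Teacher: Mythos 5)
Your proposal is correct and follows essentially the same route as the paper: integrate \eqref{eq:iGZTth} over the half-period $[\alpha,\alpha+\tfrac12]$, use $h(\alpha)=h(\alpha+\tfrac12)=0$, and evaluate the signum integral by locating its single sign change in each $\theta$-range. The only cosmetic difference is that the paper first changes variables to integrate $\sign(h(t))$ over $[\alpha-\theta,\alpha+\tfrac12-\theta]$, whereas you keep the original variable and place the switching time at $\alpha+\theta$ or $\alpha-\tfrac12+\theta$; the computations are identical.
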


\begin{proof}
Such an orbit satisfies
\begin{align*}
0 &= h(\alpha+\tfrac{1}{2})   = h(\alpha)+\int_\alpha^{\alpha+\tfrac{1}{2}} h'(t)dt
= -\int_\alpha^{\alpha+\tfrac{1}{2}} \!\sign(h(t-\theta))dt
+c\int_\alpha^{\alpha+\tfrac{1}{2}}\cos(2\pi t)dt\\
&  = -\int_{\alpha-\theta}^{\alpha+\tfrac{1}{2}-\theta}\sign(h(t))dt  -\frac{c}{\pi}\sin(2\pi\alpha).
\end{align*}
Thus, the phase $\alpha$ of the periodic solution satisfies
\be \label{sin2pialph}
c\sin(2\pi\alpha)=-\pi\int_{\alpha-\theta}^{\alpha+\frac{1}{2}-\theta}\sign(h(t))dt.
\ee
Now, there are two cases to consider. If $\theta\in[0,\tfrac12)$ then
\be \nonumber
c\sin(2\pi\alpha)=-\pi\left[\int_{\alpha-\theta}^{\alpha}\hspace{-.5em}-1\,dt + \int_{\alpha}^{\alpha+\tfrac{1}{2}-\theta}\hspace{-1em}1\,dt\right]
=\pi\Bigl(\theta -(\tfrac{1}{2}-\theta)\Bigr)
=\pi(2\theta-\tfrac{1}{2}),
\ee
while if $\theta\in[\tfrac{1}{2},1)$
\[
c\sin(2\pi\alpha)=-\pi\left[
\int_{\alpha-\theta}^{\alpha-\tfrac{1}{2}}\hspace{-1em}1\,dt
+\int_{\alpha-\tfrac{1}{2}}^{\alpha+\tfrac{1}{2}-\theta}\hspace{-1.5em}-1\,dt\right]
=-\pi\Bigl((\theta -\tfrac{1}{2})-(1-\theta)\Bigr)
=\pi(\tfrac{3}{2}-2\theta).\]

\vspace*{-3ex}
\end{proof}

The following theorem establishes an additional symmetry of $n$:1-periodic orbits of \eqref{eq:iGZT},
which we will use through Corollary~\ref{cor:sym2} to relate the $n$:1-symmetric periodic orbits of \eqref{eq:iGZT}
for different delays.

\begin{thm} \label{thm:sym2}
Let $h(t)$ be a symmetric {$n$:1}-periodic orbit of \eqref{eq:iGZT} with $n$ odd, phase $\alpha\in\R$ and 
delay $\tau>0$.
Then $h^*(t)=-h(n-t)$ is a symmetric $n$:1-periodic orbit of \eqref{eq:iGZT} with 
phase $\alpha^*=-\alpha$ and delay $\tau^*=nk^*+n/2-\tau$ for any $k^*\in\N$.
\end{thm}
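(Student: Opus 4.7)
The approach is a direct verification: compute $(h^*)'(t)$, compare to $-\sign(h^*(t-\tau^*)) + c\cos(2\pi t)$, and then check the structural requirements (period $n$, symmetry \eqref{eq:sym}, phase $-\alpha$, single upward crossing). The two available tools are $n$-periodicity of $h$ and the symmetry $h(s)=-h(s+n/2)$ established before the statement. The role of $n$ being odd and an integer enters at two crucial spots: $\cos(2\pi(n-t))=\cos(2\pi t)$ (since $n\in\Z$), and the symmetry identity requires $n$ odd (as noted in Theorem~\ref{thm:sym}).

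The key step is to verify the differential equation. By the chain rule, $(h^*)'(t)=h'(n-t)$, so substituting into \eqref{eq:iGZT} at time $n-t$ gives
\begin{equation*}
(h^*)'(t) = -\sign\bigl(h(n-t-\tau)\bigr) + c\cos(2\pi(n-t)) = -\sign\bigl(h(n-t-\tau)\bigr) + c\cos(2\pi t).
\end{equation*}
It remains to identify $\sign(h(n-t-\tau))$ with $\sign(h^*(t-\tau^*))$. Expanding with $\tau^* = nk^*+n/2-\tau$,
\begin{equation*}
h^*(t-\tau^*) = -h\bigl(n - t + \tau^*\bigr) = -h\bigl(n(k^*+1) + n/2 - t - \tau\bigr).
\end{equation*}
I would then strip off the multiple of $n$ using $n$-periodicity of $h$ to get $-h(n/2 - t - \tau)$, and apply \eqref{eq:sym} (with $s=-t-\tau$) to obtain $h(-t-\tau)$, which equals $h(n-t-\tau)$ by another application of periodicity. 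Hence $h^*(t-\tau^*) = h(n-t-\tau)$, and the signs agree.

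The remaining verifications are short. Periodicity of $h^*$ follows from $h^*(t+n) = -h(-t) = -h(n-t) = h^*(t)$ using $n$-periodicity of $h$. The symmetry $h^*(t+n/2)=-h^*(t)$ reduces, after the definition, to $-h(n/2-t) = h(n-t)$, which is exactly \eqref{eq:sym} applied at $s=-t$ (then periodicity). For the phase, $h^*(-\alpha) = -h(n+\alpha) = -h(\alpha) = 0$; on a neighbourhood of $-\alpha$, substituting $s=n-t$ shows that the sign of $h^*$ near $-\alpha$ is the sign of $-h$ near $\alpha$, so an upward crossing of $h$ at $\alpha$ becomes an upward crossing of $h^*$ at $-\alpha$. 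Finally, the reflection $t\mapsto n-t$ is a bijection on $[0,n]$ that sends zeros to zeros and swaps upward and downward crossings, but $h^*$ also flips sign, so the net effect preserves the count of upward crossings per period; thus $h^*$ has exactly one.

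The main obstacle is purely bookkeeping in the delay computation: one must be careful that the same identity works for \emph{any} $k^*\in\N$, which is the reason the free integer $k^*$ appears in $\tau^*$. The proof is essentially algebraic, with no additional estimates needed beyond the two symmetry/periodicity identities; no appeal to the explicit form \eqref{eq:closedformexpression}--\eqref{eq:p(t)} is required.
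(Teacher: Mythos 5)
Your proposal is correct and follows essentially the same route as the paper's proof: a direct verification that $h^*$ satisfies the DDE with delay $\tau^*$ using $n$-periodicity and the symmetry \eqref{eq:sym}, followed by checking that $-\alpha$ is an upward nondegenerate zero. The only (immaterial) difference is bookkeeping — the paper first rewrites $h^*(t)=h(3n/2-t)$ and absorbs the oddness of $n$ into $\cos(3n\pi-2\pi t)=-\cos(2\pi t)$, whereas you differentiate $-h(n-t)$ directly and apply the symmetry to the delayed argument instead.
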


\begin{proof}
Let $h^*(t)=-h(n-t)$. Then using \eqref{eq:sym} we have
$$h^*(t)=-h(n-t)=h(3n/2-t).$$
Thus, since $h(t)$ is differentiable and satisfies the DDE \eqref{eq:iGZT} for almost every $t>0$, we have
\begin{align*}
(h^*)'(t)&=-h'(3n/2-t)=\sign(h(3n/2-t-\tau))-c\cos(2\pi(3n/2-t))\\
&=\sign(h(n-(t-nk^*-n/2+\tau)))-c\cos(3n\pi-2\pi t)\\
&=\sign(-h^*(t-nk^*-n/2+\tau))+c\cos(2\pi t)\\
&=-\sign(h^*(t-nk^*-n/2+\tau))+c\cos(2\pi t),
\end{align*}
and, hence, $h^*(t)$ satisfies the DDE \eqref{eq:iGZT}
for almost every $t>0$.

To complete the proof, it remains to show that $\alpha^*=-\alpha$  defines a phase of $h^*(t)$. Because $h(t)$ is a symmetric $n$:1-periodic orbit with phase $\alpha$, its zeros are at $t=\alpha+kn/2$ for all $k\in\Z$. Then
$$h^*(\alpha^*)=h^*(-\alpha)=-h(n+\alpha)=0,$$
as required.

To show that $\alpha^*\in Z_*^+$, where $Z_*^+$ denotes the upward nondegenerate zeros of $h^*(t)$. Choose $\delta>0$ sufficiently small and let
$z_-^*\in (\alpha^* - \delta, \alpha^*)$ 
and $z_+^*\in (\alpha^*,\alpha^* +\delta)$, and define
$z_+=-z_-^*$ and $z_-=-z_+^*$. 
Then
\begin{align*}
h^*(z_+^*) & = h^*(-z_-)=-h(n+z_-)=-h(z_-), \\
h^*(z_-^*) & = h^*(-z_+)=-h(n+z_+)=-h(z_+).
\end{align*}
It follows from $\alpha^*=-\alpha$, and $\alpha\in Z^+$ %\ssout{an upwards nondegenerate zero of $h(t)$} 
that for $\alpha>0$ 
\[h^*(z_+^*) >0 > h^*(z_-^*).\]

Thus, $\alpha^*\in Z_*^+$, 
which shows that $\alpha^*$ is the phase of $h^*(t)$.
\end{proof}

\begin{cor} \label{cor:sym2}
Let $h(t)$ be a symmetric 1:1-periodic orbit of \eqref{eq:iGZT} with delay 
$\tau=k+\theta$ where $\theta\in[0,1)$ and $k\in\mathbb{N}_0$. 
Then \eqref{eq:iGZT}
also has symmetric 1:1-periodic orbits with delay $\tau^*=k+\theta^*$ where $\theta^*\in[0,1)$ such that
\begin{enumerate}[label=(\alph*).]
    \item \(\theta^* = \frac{1}{2} - \theta \quad \text{for } \theta \in [0, \tfrac{1}{2}), \text{ ensuring } \theta^* \in (0, \tfrac{1}{2}],\)
    \item \(\theta^* = \frac{3}{2} - \theta \quad \text{for } \theta \in [\tfrac{1}{2}, 1), \text{ ensuring } \theta^* \in (\tfrac{1}{2}, 1].\)
\end{enumerate}
\end{cor}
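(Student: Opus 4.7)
The plan is to obtain the corollary as a direct specialisation of Theorem~\ref{thm:sym2} to $n=1$, using the free integer parameter $k^*$ to arrange the integer part of the new delay $\tau^*$ to coincide with the integer part $k$ of the original delay. I would also lean on the observation made just before Proposition~\ref{prop:alpha01} that a 1:1-periodic orbit of \eqref{eq:iGZT} is also a 1:1-periodic orbit of \eqref{eq:iGZTth}, so that for 1:1 orbits the dynamics depend on the delay only through its fractional part $\theta$, and the integer part can be adjusted freely.

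Applying Theorem~\ref{thm:sym2} with $n=1$ to the given orbit $h(t)$ produces a symmetric 1:1-periodic orbit $h^*(t)=-h(1-t)$ of \eqref{eq:iGZT} with delay $\tau^* = k^* + \tfrac12 - \tau = (k^*-k) + (\tfrac12 - \theta)$ for any $k^*\in\mathbb{N}$. In case (a), where $\theta\in[0,\tfrac12)$, the quantity $\tfrac12-\theta\in(0,\tfrac12]$ already lies in $[0,1)$, so choosing $k^* = 2k$ yields $\tau^* = k + \theta^*$ with $\theta^* = \tfrac12 - \theta$. In case (b), where $\theta\in[\tfrac12,1)$, we have $\tfrac12-\theta\in(-\tfrac12,0]$, so I would rewrite it as $(\tfrac32-\theta)-1$ with $\tfrac32-\theta\in(\tfrac12,1]$ and take $k^* = 2k+1$, giving $\tau^* = k + \theta^*$ with $\theta^* = \tfrac32 - \theta$. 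The phase of the resulting orbit is $\alpha^* = -\alpha$ by Theorem~\ref{thm:sym2}.

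The only subtle point is the convention $\mathbb{N}=\{1,2,\dots\}$ in Theorem~\ref{thm:sym2}, which forbids the choice $k^*=0$ when $k=0$ in case (a). I would handle this by selecting any valid $k^*\geq 1$ (for instance $k^* = k+1$) to obtain a 1:1-periodic orbit whose delay has fractional part $\theta^*$, and then appealing to the fractional-part equivalence between \eqref{eq:iGZT} and \eqref{eq:iGZTth} to transfer the orbit to the desired integer part $k$. The boundary value $\theta^*=1$ occurring at $\theta=\tfrac12$ in case (b) corresponds to an integer delay, which is absorbed into the integer part if the strict convention $\theta^*\in[0,1)$ is enforced. Given how directly the corollary follows from Theorem~\ref{thm:sym2}, the main ``obstacle'' is really just this bookkeeping of integer versus fractional parts and the endpoint conventions.
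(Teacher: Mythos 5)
Your proposal is correct and follows essentially the same route as the paper, which simply applies Theorem~\ref{thm:sym2} with $n=1$ and $k^*=2k$ for case (a) and $k^*=2k+1$ for case (b). Your extra remark about the $k^*\in\N$ versus $k^*=0$ convention when $k=0$ is a reasonable piece of bookkeeping that the paper's one-line proof glosses over, and your fix via the fractional-part equivalence with \eqref{eq:iGZTth} is valid.
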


\begin{proof}
Apply Theorem~\ref{thm:sym2} with $n=1$, $\tau=k+\theta$, and for 1. $k^*=2k$, and for 2. $k^*=2k+1$. 
\end{proof}

Corollary~\ref{cor:sym2}(a) implies that for each $\theta\in[0,\frac{1}{4})$ there is a corresponding
$\theta^*\in(\frac{1}{4},\frac{1}{2}]$ with $\theta^*=\frac{1}{2}-\theta$ such that \eqref{eq:iGZT} has a
1:1-periodic orbit with delay $\tau=k+\theta$, if and only if, it
has a 1:1-periodic orbit with delay $\tau^*=k+\theta^*$.
Corollary~\ref{cor:sym2}(b) implies a correspondence between the 
symmetric 1:1-periodic orbits for $\theta\in[\frac{1}{2},\frac{3}{4})$ and
$\theta^*\in(\frac{3}{4},1]$ with $\theta^*=\frac{3}{2}-\theta$. 
Moreover, Corollary~\ref{cor:sym2} implies that if $\theta=\tfrac14$ or
$\theta=\tfrac34$ then $\theta^*=\theta$. 

In the case where $\theta=\frac{1}{4}$ or $\theta=\frac{3}{4}$, Proposition~\ref{prop:alpha01} implies that the phase $\alpha$ satisfies $\alpha=0$ and $\alpha=\frac{1}{2}$ for both values of 
$\theta$. In this scenario, $\alpha = -\alpha^* \pmod 1$, which indicates that the associated orbits must satisfy the stronger symmetry property:
 \begin{equation}
    \label{eq:D4}
    h(t)=h(\tfrac{1}{2}-t).
\end{equation}

Figure~\ref{fig:sym2} illustrates symmetric 1:1 periodic solutions of \eqref{eq:iGZT}. Panel~(a) depicts two orbits with delays $\tau=k+\theta$, $\theta\in(\frac{1}{2},1)$ and $\tau^*=k+\frac{3}{2}-\theta$, respectively, whereas panel~(b) shows an orbit with $\theta=\tfrac14$. Such solutions trace continuous closed curves in the $(h(t),h(t-\tau))$-plane, which are invariant under a $180^\circ$ rotation. From Corollary~\ref{cor:sym2}, a $90^\circ$ rotation maps one closed curve to the other (red to blue and blue to red). The solution in panel~(b) satisfies the stronger symmetry property \eqref{eq:D4}: its closed curves over the $(h(t),h(t-\tau))$-plane are invariant under a rotation by $90^\circ$.

\begin{figure}
    \centering
    \includegraphics[width=\textwidth]{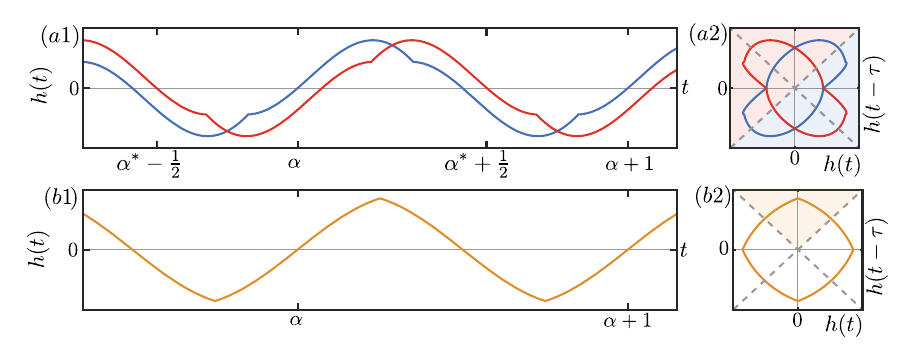}
    \caption{Symmetric 1:1-periodic solutions shown as time series in (a1)-(b1) and projected onto the $(h(t),h(t-\tau))$-plane in (a2)-(b2). 
    (a): 1:1-periodic solutions symmetrically related
    by Corollary~\ref{cor:sym2}(2),
    with $c=2.75$, and delay $\tau=k+\theta$, $\theta=0.8$, and phase 
    $\alpha$ in blue, and delay $\tau^*=k+\frac{3}{2}-\theta$, phase $\alpha^*=-\alpha$, in red. 
    (b): symmetric 1:1-periodic orbit with $\tau=k+\frac{1}{4}$ and $c=1.5$, satisfying the stronger symmetry property \eqref{eq:D4}.}
    \label{fig:sym2}
\end{figure}

The following theorem provides a closed-form expression for symmetric 1:1-periodic orbits, when such orbits exist. Showing that these candidate orbits actually satisfy the required conditions is non-trivial and will be tackled in Theorems~\ref{thm:po} and~\ref{thm:po2} in Section~\ref{section:sym11}.

\begin{thm} \label{thm:po_closedform}
Let $h(t)$ be a 1:1-periodic orbit 
of \eqref{eq:iGZT} with delay 
$\tau=k+\theta$ where $\theta\in[0,1)$, and $k\in\mathbb{N}_0$,
and 
phase \( \alpha \in[-\tfrac14,\tfrac34]\).
Then, such an orbit is symmetric, 
with the phase
defined by \eqref{eq:alpha0} for $\theta\in[0,\frac{1}{2})$,
and by \eqref{eq:alpha1} for $\theta\in[\frac{1}{2},1)$,
and satisfies
\be\label{eq:h11}
h(t)=p(t)+\frac{c}{2\pi}(\sin(2\pi t)-\sin(2\pi \alpha)),
\ee
where
\begin{align} \label{eq:p0}
p(t)&= \begin{dcases*}
-t+\alpha, & \textrm{for}\; $t\in[\alpha,\alpha+\tfrac{1}{2})$\\
-1+t-\alpha,\phantom{-2\theta} & \textrm{for}\;  $t\in[\alpha+\tfrac{1}{2},\alpha+1)$
\end{dcases*}\qquad\quad\textrm{if}\ \theta=0,\\   \label{eq:p0half}
p(t)&= \begin{dcases*}
t-\alpha, & \textrm{for}\; $ t\in[\alpha,\alpha+\theta)$\\
2\theta-t+\alpha, & \textrm{for}\; $ t\in[\alpha+\theta,\alpha+\theta+\tfrac{1}{2})$\\
-1+t-\alpha,\phantom{2f\theta} & \textrm{for}\; $ t\in[\alpha+\theta+\tfrac{1}{2},\alpha+1)$
\end{dcases*}\quad\;\textrm{if}\ \theta\in(0, \tfrac{1}{2}),\\  \label{eq:phalf}
p(t)& = \begin{dcases*}
t-\alpha, & \textrm{for}\; $ t\in[\alpha,\alpha+\tfrac{1}{2})$\\
1-t+\alpha,\phantom{-2f\theta} & \textrm{for}\; $ t\in[\alpha+\tfrac{1}{2},\alpha+1)$
\end{dcases*}\qquad\qquad\textrm{if}\ \theta=\tfrac{1}{2},\\  \label{eq:phalf1}
p(t)&= \begin{dcases*}
-t+\alpha, & \textrm{for}\; $ t\in[\alpha,\alpha+\theta-\tfrac{1}{2})$\\
1-2\theta+t-\alpha, & \textrm{for}\; $ t\in[\alpha+\theta-\tfrac{1}{2},\alpha+\theta)$\\
1-t+\alpha, & \textrm{for}\; $ t\in[\alpha+\theta,\alpha+1)$
\end{dcases*}\qquad\textrm{if}\ \theta\in(\tfrac{1}{2},1),
\end{align}
over a period $t\in[\alpha,\alpha+1)$.
\end{thm}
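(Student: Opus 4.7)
The symmetry of the orbit and the phase relations are immediate: since $n=1$ is odd, Proposition~\ref{prop:n:1symmetric} forces $h$ to satisfy \eqref{eq:sym}, and Proposition~\ref{prop:alpha01} then yields \eqref{eq:alpha0} for $\theta\in[0,\tfrac12)$ and \eqref{eq:alpha1} for $\theta\in[\tfrac12,1)$. Period one combined with $\tau=k+\theta$ also gives $h(t-\tau)=h(t-\theta)$, so the orbit solves the reduced equation \eqref{eq:iGZTth}. The remaining content of the theorem is the closed-form expression \eqref{eq:h11}--\eqref{eq:phalf1}, which I plan to obtain by direct integration of \eqref{eq:iGZTth} over a single period $[\alpha,\alpha+1)$.

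The plan is, for each of the four cases $\theta=0$, $\theta\in(0,\tfrac12)$, $\theta=\tfrac12$, and $\theta\in(\tfrac12,1)$, to first determine the sub-intervals of $[\alpha,\alpha+1)$ on which the piecewise-constant forcing $\sign(h(t-\theta))$ is constant. This uses only the sign pattern \eqref{eq:hsigns}--\eqref{eq:hzeros} together with the periodicity of $h$, translated by $\theta$. For example, for $\theta\in(0,\tfrac12)$ the breakpoints in $[\alpha,\alpha+1)$ are $\alpha+\theta$ and $\alpha+\theta+\tfrac12$, producing three sub-intervals on which $\sign(h(t-\theta))$ equals $-1,+1,-1$ respectively; for $\theta\in(\tfrac12,1)$ the breakpoints are $\alpha+\theta-\tfrac12$ and $\alpha+\theta$, with signs $+1,-1,+1$; and for the degenerate values $\theta=0$ and $\theta=\tfrac12$ one breakpoint of $\sign(h(t-\theta))$ coincides with a zero of $h$ and the three sub-intervals collapse to two.

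On each such sub-interval, \eqref{eq:iGZTth} reduces to $h'(t)=\mp 1+c\cos(2\pi t)$, which integrates directly to a linear term plus $\tfrac{c}{2\pi}\sin(2\pi t)$. Starting from $h(\alpha)=0$ and enforcing continuity of $h$ at each interior breakpoint, the pieces paste into an expression $h(t)=p(t)+\tfrac{c}{2\pi}(\sin(2\pi t)-\sin(2\pi\alpha))$ with $p$ continuous and piecewise linear; a short calculation in each of the four cases matches $p$ with the formulas \eqref{eq:p0}--\eqref{eq:phalf1}. The main obstacle is really just the bookkeeping across the four cases, together with one consistency check: the construction only closes into a true $1$:$1$-periodic orbit if $h(\alpha+\tfrac12)=0$, and evaluating the pasted expression at $t=\alpha+\tfrac12$ and setting it to zero reproduces exactly the phase equations \eqref{eq:alpha0}--\eqref{eq:alpha1}, so Proposition~\ref{prop:alpha01} emerges from the construction as well. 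The remaining zero $h(\alpha+1)=0$ then follows automatically from the symmetry $h(t)=-h(t+\tfrac12)$.
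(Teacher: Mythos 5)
Your proposal is correct and follows essentially the same route as the paper: invoke Proposition~\ref{prop:n:1symmetric} to get the symmetry and hence the sign pattern \eqref{eq:hsigns}--\eqref{eq:hzeros}, which makes the right-hand side of \eqref{eq:iGZTth} an explicit function of $t$ that can be integrated piecewise over one period. The paper's proof is simply a terser version of your case-by-case bookkeeping, and your observation that $h(\alpha+\tfrac12)=0$ reproduces the phase conditions is exactly the content of Proposition~\ref{prop:alpha01}.
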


\begin{proof}
By Proposition~\ref{prop:n:1symmetric}, any such 1:1-periodic orbit is symmetric, from which it follows that \eqref{eq:hsigns} and \eqref{eq:hzeros} are satisfied. But for such an orbit, all the terms on the right-hand side of \eqref{eq:iGZTth} become functions of $t$ only, and the equation can then be integrated. It follows that the solution must be of the form
\eqref{eq:h11} with $p(t)$ defined by one of \eqref{eq:p0}-\eqref{eq:phalf1} depending on the value of $\theta\in[0,1)$.
\end{proof}

\subsection{Region of existence of symmetric periodic orbits}
\label{section:sym11}

In this section we analyze the parameter regions in the $(\theta,c)$-plane where symmetric 1:1-periodic orbits exist for  
\eqref{eq:iGZT}. These regions are derived from the conditions \eqref{eq:halpha}-\eqref{eq:hzeros}. We also present two theorems that characterize the solutions within such regions and discuss their behaviour at the regions' boundaries.

\subsubsection{Parameter constraints for periodic orbits}
\label{section:parameterconstraintsforperiodicorbits}
In general, \eqref{eq:alpha0} and \eqref{eq:alpha1} will each have two solutions for $\alpha \in [-\tfrac14,\tfrac34] \pmod 1$: one with $\alpha \in [-\frac{1}{4}, \frac{1}{4}] \pmod 1$ and another with $\alpha \in [\frac{1}{4}, \frac{3}{4}] \pmod 1$. However, we will show below that 
there can only be two valid solutions if $c$ is small and
$\theta\in(0,\tfrac12)$. When $\theta\in[\tfrac12,1]$ and/or when $c\gg0$ only the solution of \eqref{eq:alpha0} and \eqref{eq:alpha1} 
with $\alpha \in [-\frac{1}{4}, \frac{1}{4}] \pmod 1$ will yield  a valid periodic orbit.

Recall the functions $u(\theta)$ and $v(\theta)$ defined by
\eqref{eq:uv}.
Because $|\sin(2\pi\alpha)|\leq1$, Proposition~\ref{prop:alpha01} imposes necessary conditions for the 1:1-periodic orbits to exist:
\be \label{eq:necc}
\left\{
\begin{array}{ll}
c^2\geq {u}^2(\theta), & \theta\in[0,\frac{1}{2}),\\
c^2\geq {v}^2(\theta), & \theta\in[\frac{1}{2},1).
\end{array} \right.
\ee
But it also follows from \eqref{eq:iGZTth} that
\be \label{eq:hdashalph}
h'(\alpha)=-\sign(h(\alpha-\theta))+c\cos(2\pi\alpha)
=\left\{
\begin{array}{ll}
1+c\cos(2\pi\alpha), & \theta\in(0, \frac{1}{2}),\\
-1+c\cos(2\pi\alpha), & \theta\in(\frac{1}{2},1).
\end{array} \right.
\ee

We remark that, because of the $\sign(h(t-\theta))$ term in \eqref{eq:iGZTth}, the derivative of a period-one orbit, $h'(t)$, will have discontinuities at
$t=\alpha+k/2-\theta$ for any integer $k$. In particular, if $\theta=0$ or $\theta=\tfrac12$, then the zeros of $h(t)$ coincide with the discontinuities of $h'(t)$. As a result, \(h'(\alpha)\) and \(h'(\alpha + \frac{1}{2})\) will have different left and right derivatives.
We will set these cases aside for now, and return to them in the proof of Theorem~\ref{thm:po}. 

Proposition~\ref{prop:alpha01} determines the value of 
$c\sin(2\pi\alpha)$, which leads to two possible values of
$c\cos(2\pi\alpha)$ in \eqref{eq:hdashalph}.
We first consider the case where $c\cos(2\pi\alpha)$ is positive.

For $\theta\in(0,\tfrac12)$, 
\be
\label{eq:c0proof}
c\sin(2\pi\alpha)={u}(\theta), \qquad c\cos(2\pi\alpha)=+\sqrt{c^2-{u}^2(\theta)},\quad \theta\in(0,\tfrac12),
\ee
implies $\cos(2\pi\alpha) \geq 0$ and $\alpha \in [-\frac{1}{4}, \frac{1}{4}] \pmod 1$. It then follows from \eqref{eq:hdashalph}
when \eqref{eq:necc} is satisfied, 
that $h'(\alpha) \geq 1$ so $\alpha$ is indeed an upward nondegenerate zero of $h(t)$.

For $\theta\in(\frac{1}{2},1)$, we see from \eqref{eq:hdashalph} that, to satisfy $h'(\alpha)\geq0$, a necessary condition is that $c\cos(2\pi\alpha)\geq1$. Since $\alpha\in[\frac{1}{4},\frac{3}{4}](\bmod1)$
implies $c\cos(2\pi\alpha)\leq0$, it is not possible to obtain a periodic orbit with $\alpha\in[\frac{1}{4},\frac{3}{4}](\bmod1)$ and $\theta\in(\frac{1}{2},1)$. Then any 1:1-periodic orbit with 
$\theta\in(\frac{1}{2},1)$ satisfies
\be
\label{eq:c1proof}
c\sin(2\pi\alpha)={v}(\theta), \qquad c\cos(2\pi\alpha)=+\sqrt{c^2-{v}^2(\theta)},\quad \theta\in(\tfrac12,1),
\ee
and $\alpha\in[-\frac{1}{4},\frac{1}{4}](\bmod1)$.
For such a solution, from $c\cos(2\pi\alpha)\geq1$ and \eqref{eq:alpha1}, we obtain the constraint
\be \label{eq:necc1}
c^2=c^2\cos(2\pi\alpha)+c^2\sin(2\pi\alpha)\geq 1 + {v}^2(\theta),
\ee
when $\theta\in(\frac{1}{2},1)$, which is stronger than the corresponding constraint stated in \eqref{eq:necc}. 

In Theorem~\ref{thm:po} we will construct 1:1-periodic orbits with 
$\alpha\in[-\frac{1}{4},\frac{1}{4}](\bmod1)$ for all $\theta\in[0,1)$, where we will find that additional constraints are required to ensure that the periodic orbit does not have additional zeros.

Next, consider 1:1-periodic orbits with $\alpha \in [\frac{1}{4}, \frac{3}{4}] \pmod 1$. From above such an orbit is impossible when $\theta \in (\frac{1}{2},1)$. 
On the other hand,  
for $\theta \in (0, \frac{1}{2})$,  
we have $\cos(2\pi\alpha) \leq 0$. In that case, using  \eqref{eq:hdashalph}, to satisfy $h'(\alpha) \geq 0$, requires
$0 \leq -c \cos(2\pi\alpha) \leq 1$, otherwise $\alpha$ is not an upward nondegenerate zero. In particular,
\be
\label{eq:c0proof3}
c\sin(2\pi\alpha)={u}(\theta), \qquad c\cos(2\pi\alpha)=-\sqrt{c^2-{u}^2(\theta)},\quad \theta\in(0,\tfrac12).
\ee
Such a solution must satisfy the constraint
\be  
\label{eq:necc1.5}  
c^2 = c^2 \cos(2\pi\alpha) + c^2 \sin(2\pi\alpha) \leq 1 + {u}^2(\theta). 
\ee  
Combining this with \eqref{eq:necc}, we obtain  
\be \label{eq:necc2}  
{u}^2(\theta) \leq c^2 \leq 1 + {u}^2(\theta).  
\ee  
Even though 
this solution can only exist for a narrow range of $c$ values, it will be of interest later, and
we construct it in Theorem~\ref{thm:po2}.

\subsubsection{\texorpdfstring{Symmetric 1:1-periodic orbits with $\alpha\in[-\tfrac14,\tfrac14]$}{Symmetric 1:1-periodic orbits with alpha in [-1/4,1/4]}} \label{sec:existsym11pos14}
We now determine the region $\mathcal{R}$ of the $(\theta,c)$-plane where symmetric 1:1-periodic orbits with phase 
$\alpha \in [-\frac{1}{4}, \frac{1}{4}] \pmod 1$ exist.

Recall the functions $u(\theta)$ and $v(\theta)$ defined by \eqref{eq:uv}. Also let $\hat{\theta}\approx0.4695$ be defined as the solution for
$\hat{\theta}\in[\tfrac14,\tfrac12]$ of
\begin{equation}
    \label{eq:thetahat}    u(\hat{\theta})\sin{\left(\sqrt{u^2(\hat{\theta})-1}\right)}+\cos{(2\pi\hat{\theta})}\sqrt{u^2(\hat{\theta})-1}=\sin{(2\pi\hat{\theta})}.
\end{equation}

\begin{defn} \label{def:cR}
The region $\mathcal{R}$ 
is defined as the set of all $(\theta, c) \in [0, 1) \times [0, \infty)$ satisfying the conditions:  
\begin{enumerate}[label=\textbf{(\roman*)}]
    \item \label{enumerate:1} For \( \theta =0 \) and \(\theta=\tfrac12\):
        \be
        \label{eq:c2pi2}
        c^2 \geq \frac{\pi^2}{4}+1.
        \ee
    \item \label{enumerate14} For  \(\theta=\tfrac14\):
        \be
        \label{eq:c0}
        c^2 >0.
        \ee   
    \item \label{enumerate:2} For $\theta\in(\hat{\theta},\tfrac12)$,
        \be
        \label{eq:c2cth}
        c^2 \geq w^2_-(\theta)
        \ee
where $w^2_-(\theta)$ denotes the smallest solution to the equation
\begin{align}
\label{eq:boundaryIc(theta)}
    w^2_-\sin{(\sqrt{w^2_--1})}&={u}(\theta) \left( \sin(2\pi \theta) - \cos(2\pi \theta) \sqrt{w^2_- - 1} \right) \\&\hspace*{2.5em}-\sqrt{w^2_- - {u}^2(\theta)} \left( \sin(2\pi \theta) \sqrt{w^2_- - 1} + \cos(2\pi \theta) \right)\!.\nonumber
\end{align}
The function $w^2_-(\theta)$ is strictly increasing for $\theta\in[\hat{\theta},\tfrac12]$, with 
\be
\label{eq:w^-2boundary}
    w^2_-(\hat{\theta}) = \frac{\pi^2}{4}, \quad w^2_-\left(\frac{1}{2}\right) = \frac{\pi^2}{4} + 1. 
\ee
\item \label{enumerate:3} For $\theta\in[\tfrac14,\hat{\theta}]$: \be
        \label{eq:cP}
        c^2 \geq u^2(\theta),
\ee

\item For $\theta\in(0,\tfrac14)$,  replace \( \theta \) with \( \frac{1}{2} - \theta\in(\tfrac14,\tfrac12) \) in \ref{enumerate:1}, \ref{enumerate:2} and \ref{enumerate:3}.
    \item For \( \theta \in (\tfrac{1}{2}, 1) \):
                \be
                \label{eq:cQsin}
                c^{2} \geq {v}^2(\theta) + \left(\frac{\pi/2 + {v}(\theta)\cos(2\pi\theta)}{\sin(2\pi\theta)}\right)^2.
                \ee
%% CHANGE EEVERYTHING
\end{enumerate}
\end{defn}

\begin{figure}[t!]
    \centering
    \includegraphics[width=\textwidth]{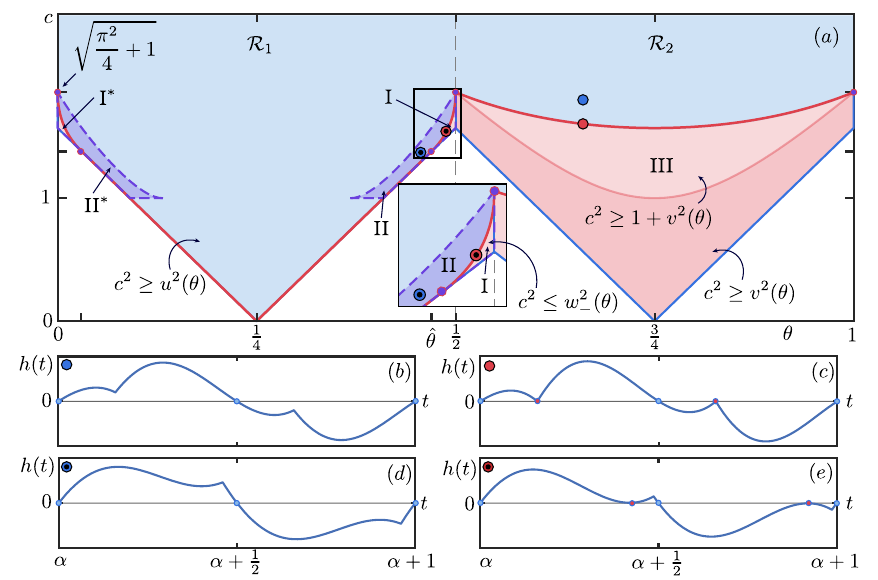} % Full text width
    \caption{
        Illustration of Definition~\ref{def:cR} and Theorem~\ref{thm:po}. 
        (a) The region $\mathcal{R}=\mathcal{R}_1\cup\mathcal{R}_2$ in light blue in the $(\theta, c)$-plane, with subregions: purple I and $\text{I}^*$ (smooth local extrema), II and $\text{II}^*$ (zero-line crossings of extrema), red III (non-smooth extrema at zero-line). 
        (b,d) Periodic orbits within $\mathcal{R}$; (c,e) periodic orbits at $\mathcal{R}$'s boundary.
    }
    \label{fig:RegEx}
\end{figure}

The following theorem states that 1:1-symmetric periodic
orbits exist within $\mathcal{R}$.

\begin{thm} \label{thm:po}
Let $\tau=k+\theta$ where $k\in\N_0$ and $\theta\in[0,1)$, and let $\cR$ be defined by
Definition~\ref{def:cR}.
If $(\theta,c)\in\mathcal{R}$, the psGZT model \eqref{eq:iGZT} admits a 1:1-symmetric periodic solution $h(t)$ 
given by \eqref{eq:h11},
with phase $\alpha\in[-\frac{1}{4},\frac{1}{4}]\pmod 1$ defined by
\eqref{eq:alpha0} for $\theta\in[0,\frac{1}{2})$,
and by \eqref{eq:alpha1} for $\theta\in[\frac{1}{2},1)$.
The function $p(t)$ satisfies  the relevant formula from
\eqref{eq:p0}-\eqref{eq:phalf1}, depending on the value of $\theta$.
\end{thm}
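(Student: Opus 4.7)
The strategy is to take the candidate orbit defined by \eqref{eq:h11}--\eqref{eq:phalf1} supplied by Theorem~\ref{thm:po_closedform}: it is already continuous, $1$-periodic, and satisfies the DDE on each subinterval by construction. What remains is to verify that it is a genuine 1:1-periodic orbit in the sense of \eqref{eq:halpha}--\eqref{eq:hzeros}, namely that $\alpha$ is an upward nondegenerate zero and that $h$ has the correct sign on each half-period with no additional zeros. By Proposition~\ref{prop:n:1symmetric}, any such orbit must be symmetric, so \eqref{eq:sym} allows one to reduce the sign check to showing that $h(t)>0$ throughout $(\alpha,\alpha+\tfrac12)$; the negativity on the other half-period then follows automatically. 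On each linear piece of $p$ one has $h'(t)=\pm 1+c\cos(2\pi t)$, so any interior extremum of $h$ solves $\cos(2\pi t)=\mp 1/c$ and can only exist once $c\geq 1$. The minimum of $h$ on $(\alpha,\alpha+\tfrac12)$ is therefore attained either at such an interior critical point on one of the pieces of $p$ or at a breakpoint of $p$, and the conditions defining $\mathcal{R}$ are precisely those that force this minimum to be nonnegative. Nondegeneracy of $\alpha$ in the interior of $\mathcal{R}$ follows from \eqref{eq:hdashalph} combined with \eqref{eq:c0proof}--\eqref{eq:c1proof}, while at the exceptional values $\theta\in\{0,\tfrac12\}$ the right-derivative/second-derivative argument used after \eqref{eq:hdashalph} applies.

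The proof is then a case split in $\theta$. For $\theta\in[\tfrac14,\hat\theta]$, direct inspection of \eqref{eq:p0half} shows that $h$ rises from $0$ at $\alpha$, attains a corner maximum at $\alpha+\theta$, and decreases back to $0$ at $\alpha+\tfrac12$, with no interior critical point dipping below zero, so positivity holds whenever \eqref{eq:alpha0} is solvable, yielding \eqref{eq:cP}. For $\theta\in(\tfrac12,1)$, the three-piece formula \eqref{eq:phalf1} produces on the positive half-period a slope-$(-1)$ piece followed by a slope-$(+1)$ piece; locating the interior critical point of the latter via $\cos(2\pi t)=-1/c$, substituting into \eqref{eq:h11}, and using \eqref{eq:c1proof} to eliminate $\alpha$ yields after standard trigonometric manipulation the explicit inequality \eqref{eq:cQsin}. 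The range $\theta\in(0,\tfrac14)$ follows from the involution $\theta\mapsto\tfrac12-\theta$ of Corollary~\ref{cor:sym2}(a), while the three exceptional values $\theta\in\{0,\tfrac14,\tfrac12\}$ are recovered as limits of the adjoining regimes together with the degenerate-zero check of \eqref{eq:hdashalph}.

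The main obstacle is the intermediate range $\theta\in(\hat\theta,\tfrac12)$, in which the slope-$(-1)$ piece of \eqref{eq:p0half} on $[\alpha+\theta,\alpha+\tfrac12]$ acquires an interior critical point that becomes the global minimum of $h$ on the positive half-period once $c$ is sufficiently large. Locating that critical point at $t^*$ where $\cos(2\pi t^*)=1/c$, evaluating \eqref{eq:h11} there, and using \eqref{eq:c0proof} to eliminate $\alpha$ produces, after nontrivial trigonometric simplification, the transcendental equation \eqref{eq:boundaryIc(theta)} whose smaller positive root is $w^2_-(\theta)$. Identifying the correct branch of this equation, establishing strict monotonicity of $w^2_-$ on $[\hat\theta,\tfrac12]$ by implicit differentiation, and checking the boundary values \eqref{eq:w^-2boundary}---which must match continuously with \eqref{eq:c2pi2} at $\theta=\tfrac12$ and with \eqref{eq:cP} at $\theta=\hat\theta$ through the defining equation \eqref{eq:thetahat} of $\hat\theta$---is the technically demanding part of the argument; everything else is essentially bookkeeping.
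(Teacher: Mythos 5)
Your overall strategy coincides with the paper's: take the candidate profile from Theorem~\ref{thm:po_closedform}, use symmetry to reduce everything to showing $h(t)>0$ on $(\alpha,\alpha+\tfrac12)$, split on $\theta$, and use Corollary~\ref{cor:sym2} to halve the case analysis. However, in both of the non-trivial regimes you have misidentified where the binding constraint actually comes from, and the computations you describe would not produce the inequalities you cite. For $\theta\in(\tfrac12,1)$ you claim \eqref{eq:cQsin} arises by evaluating $h$ at an interior critical point of the slope-$(+1)$ piece where $\cos(2\pi t)=-1/c$. But that critical point, when it lies in the positive half-period, satisfies $t_1\in(\tfrac14,\tfrac12)$ with $h''(t_1)=-2\pi c\sin(2\pi t_1)<0$: it is a local \emph{maximum}, and the companion minimum $t_2\in(\tfrac12,\tfrac34)$ lies beyond $\alpha+\tfrac12$. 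On each piece $h$ is monotone or unimodal with an interior maximum, so the minimum over the half-period is attained at the \emph{corner} $t=\alpha+\theta-\tfrac12$, and \eqref{eq:cQsin} is exactly the squared form of $h(\alpha+\theta-\tfrac12)\geq0$. The algebraic shape of \eqref{eq:cQsin} (rational in $\sqrt{c^2-v^2(\theta)}$, with no $\sin(\sqrt{c^2-1})$ term) already signals that it comes from evaluating $h$ at a point whose location is known exactly, not at a $c$-dependent critical point. You also omit the separate check that \eqref{eq:cQsin} implies $c^2\geq 1+v^2(\theta)$, i.e.\ $h'(\alpha)\geq0$, which the paper establishes via the auxiliary functions $\mu,\nu$.

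The error is mirrored in the range $\theta\in(\hat\theta,\tfrac12)$. There you place the dangerous minimum on the slope-$(-1)$ segment $[\alpha+\theta,\alpha+\tfrac12]$ at a point $t^*$ with $\cos(2\pi t^*)=1/c$. No such point exists: since $\alpha\in(0,\tfrac14]$, that segment lies in $(\tfrac14,\tfrac34)$, where $\cos(2\pi t)\leq 0$, so $h'(t)=-1+c\cos(2\pi t)\leq -1$ throughout and $h$ is strictly decreasing there. The minimum that generates \eqref{eq:boundaryIc(theta)} sits instead on the \emph{first}, slope-$(+1)$ piece $[\alpha,\alpha+\theta)$, at $t_2\in(\tfrac12,\tfrac34)$ with $\cos(2\pi t_2)=-1/c$; one must first delimit the subregion (region II, inequality \eqref{eq:boundaryII}) in which $t_2<\alpha+\theta$ at all, and only then impose $h(t_2)\geq0$ to obtain $c^2\geq w_-^2(\theta)$. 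Relatedly, your description of the orbit for $\theta\in[\tfrac14,\hat\theta]$ as monotone up to a corner maximum is not accurate for large $c$: the local max--min pair $t_1,t_2$ can occur inside $(\alpha,\alpha+\theta)$ there as well, and the content of the claim is that $h(t_2)\geq0$ whenever $\theta\leq\hat\theta$, not that the critical points are absent. Finally, treating $\theta\in\{0,\tfrac14,\tfrac12\}$ "as limits of the adjoining regimes" is not a proof for those parameter values; the paper argues $\theta=0$ and $\theta=\tfrac14$ directly and obtains $\theta=\tfrac12$ from $\theta=0$ via Corollary~\ref{cor:sym2}.
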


The proof of Theorem~\ref{thm:po} is quite technical and long, and involves considering many different cases. To avoid breaking up the flow of the presentation here, 
it is instead presented in Appendix~\ref{sec:pf:po}.

Figure~\ref{fig:RegEx}(a)
provides a visual representation of 
%$\mathcal{R}$. Panel (a) illustrates 
the existence region $\mathcal{R}$ in light blue, with the red curve as its boundary,
while Figure~\ref{fig:RegEx}(b)-(e) showcase four periodic orbits either within or at the boundary of $\mathcal{R}$, demonstrating how conditions \eqref{eq:halpha}-\eqref{eq:hzeros} are violated.  

The region $\cR$ can be split into a left-hand subregion 
\(\cR_1\) with \(\theta \in [0, \tfrac{1}{2})\) and a right-hand subregion
\(\cR_2\) with \(\theta \in [\tfrac{1}{2}, 1)\), with $\cR=\cR_1 \cup \cR_2$.
Because of Corollary~\ref{cor:sym2}, the closure of region $\cR_1$ is symmetric about
$\theta=\tfrac12$, and the closure of 
region $\cR_2$ is symmetric about
$\theta=\tfrac34$. The inequalities \eqref{eq:necc} define cones rooted at 
$(\theta,c) = (\tfrac{1}{4},0)$ and $(\theta,c) = (\tfrac{3}{4},0)$ which
contain $\cR_1$ and $\cR_2$. However, the necessary conditions \eqref{eq:necc} to obtain solutions are not sufficient everywhere. We see from 
Figure~\ref{fig:RegEx}(a) and \eqref{eq:c2pi2}--\eqref{eq:cQsin} that in $\cR_1$ the boundary of $\cR$ is given by \eqref{eq:cP} only when $\theta\in[\tfrac12-\hat{\theta},\hat{\theta}]$, while the boundary of $\cR_2$ does not touch the 
cone $c^2\geq v^2(\theta)$.  

Consider the right-hand subregion $\mathcal{R}_2$,
whose lower boundary is marked by the red curve
defined by \eqref{eq:cQsin}. Below this curve 
in region III, no valid symmetric 1:1 orbits exist: discontinuities at \(t = \alpha + \theta - \tfrac{1}{2}\) and \(t = \alpha + \theta\) intersect the zero-line, introducing four additional zeros per period. In Figure~\ref{fig:RegEx}(b), the discontinuities create sharp extrema which reach the zero-line in Figure~\ref{fig:RegEx}(c)
on the lower boundary of $\cR_2$. 

We now consider the more complicated left-hand subregion $\mathcal{R}_1$. The purple regions II and II* (defined by \eqref{eq:boundaryII} and via \(\theta \mapsto \tfrac{1}{2} - \theta\), respectively) mark where smooth segments of the orbit develop additional extrema. Within these lie smaller subregions I and I*. Subregion I is defined by $\theta\in[\hat\theta,\tfrac14]$
and $c^2\in[u^2(\theta),w^2_-(\theta)]$, and I* is its symmetric counterpart.
Within I and I* these extrema cross zero, invalidating \eqref{eq:hsigns},
and so I and I* are not contained in $\cR$.
Crucially, outside I and I* but within II and II*, symmetric 1:1 orbits persist.
Figure~\ref{fig:RegEx}(d) shows an example where extrema do not reach the zero-line, while (e) 
shows an example on the curve $c^2=w_-^2(\theta)$, the upper boundary of I, which has additional degenerate zeros. Below this curve, inside I, the condition \eqref{eq:hsigns} is violated and there is no 1:1-periodic orbit.
The curve $c^2=w_-^2(\theta)$ is
described in \ref{enumerate:2} 
and illustrated
in the enlargement box in Figure~\ref{fig:RegEx}(a), with its endpoints marked by purple dots at $(\theta,c)=(\hat{\theta},\tfrac\pi2)$ and $(\tfrac12,(\tfrac{\pi^2}{4}+1)^{\frac12})$.

\subsubsection{\texorpdfstring{Existence of symmetric 1:1-periodic orbits with $\alpha\in[\tfrac14,\tfrac34]$}{Existence of symmetric 1:1-periodic orbits with alpha in [1/4,3/4]}}
\label{section:existencesymmetric1434}

\begin{figure}
    \centering
    \includegraphics[width=\textwidth]{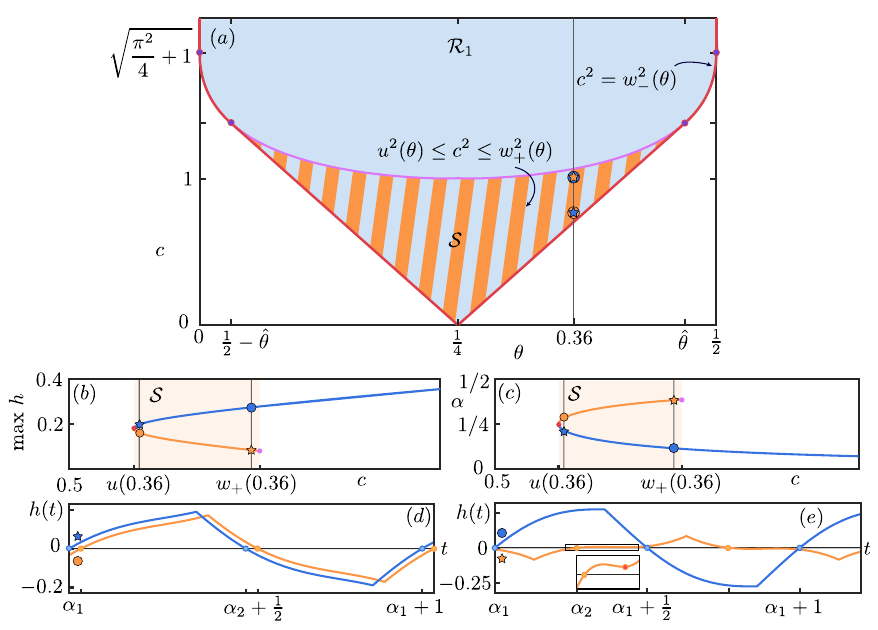}
    \caption{(a) The regions $\mathcal{R}_1$ (light blue) and $\mathcal{S}$ (hashed orange) in the $(\theta, c)$-plane for $\theta \in [0, \tfrac{1}{2}]$ where 1:1-periodic orbits exist with phase 
    $\alpha\in[-\tfrac14,\tfrac14](\bmod1)$ or $\alpha\in[\tfrac14,\tfrac34](\bmod1)$ respectively.
    (b) shows the maximum of the solution and (c) the phase, for
    continuation in $c$ for fixed $\theta = 0.36$ of the two 1:1-periodic orbits,
    showing a fold bifurcation between these periodic orbits when $u^2(\theta)=c^2$.
    (d)-(e) illustrate coexisting periodic orbits with phase $\alpha_1\in[-\tfrac14,\tfrac14]\pmod 1$ in blue, and phase $\alpha_2\in[\tfrac14,\tfrac34]\pmod 1$ in orange: (d) 
    at $c = 0.695$ near the fold point,  
    and (e) $c = 1.04$ near the point where the periodic orbit with phase $\alpha_2$ ceases to exist because 
    \eqref{eq:hsigns} is violated. 
    } 
    \label{fig:RegEx2}
\end{figure}

We now focus on symmetric 1:1-periodic orbits with phase $\alpha \in [\tfrac14,\tfrac34]$. As discussed in Section~\ref{section:parameterconstraintsforperiodicorbits}, the condition \( h'(\alpha) \geq 0 \) cannot be satisfied for \( \theta \in (\tfrac{1}{2}, 1) \); in other words, such orbits do not exist. Therefore, we restrict our attention to the case \( \theta \in [0, \tfrac{1}{2}] \).

\begin{defn} \label{def:cS}
    The region $\cS$ is
    defined 
    as the set of all \( (\theta, c) \in [\tfrac{1}{2}-\hat{\theta},\hat{\theta}] \times [0, \infty) \) satisfying:
\begin{enumerate}[label=\textbf{(\roman*')}]
\item \label{enumerate:1'} For $\theta\in[\tfrac14,\hat{\theta}]$, 
\be \label{eq:ucwp}
u^2(\theta)\leq c^2\leq w^2_+(\theta)
\ee
where $w^2_+(\theta)$ denote the smallest solution to the equation
\begin{align}
\label{eq:boundaryIc(theta)2}
    w^2_+\sin{(\sqrt{w^2_+-1})}&={u}(\theta) \left( \sin(2\pi \theta) - \cos(2\pi \theta) \sqrt{w^2_+ - 1} \right) \\&\hspace*{2em}+\sqrt{w^2_+ - {u}^2(\theta)} \left( \sin(2\pi \theta) \sqrt{w^2_+ - 1} + \cos(2\pi \theta) \right)\!.\nonumber
\end{align}
that is strictly increasing for $\theta\in[\tfrac14,\hat{\theta}]$, with 
\be
\label{eq:w+2boundary}
w^2_+(\tfrac14)=1,\quad w^2_+(\hat{\theta}) = \tfrac{\pi^2}{4}.
\ee  
\item For $\theta\in[\tfrac12-\hat{\theta},\tfrac14)$, replace $\theta$ with $\tfrac12-\theta$ in \ref{enumerate:1'}.
\end{enumerate}
\end{defn}

We now derive the region of existence for 1:1-periodic orbits with phase $\alpha\in[\tfrac 14,\tfrac 3 4]\pmod 1$.

\begin{thm} \label{thm:po2}
Let \( \tau = k + \theta \), where \( k \in \N_0 \) and \( \theta \in [0, \tfrac{1}{2}] \).  
Let the phase \( \alpha \in [\tfrac{1}{4}, \tfrac{3}{4}] \pmod 1\) be defined by \eqref{eq:alpha0}.  
Then, provided \( (\theta, c) \in \mathcal{S} \), where $\cS$ is defined in Definition~\ref{def:cS}, the psGZT model \eqref{eq:iGZT} admits a symmetric 1:1-periodic solution \( h(t) \), given by \eqref{eq:h11}, where $p(t)$ satisfies \eqref{eq:p0half}.
\end{thm}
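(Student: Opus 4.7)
My plan is to verify that the candidate solution provided by Theorem~\ref{thm:po_closedform}, namely \eqref{eq:h11} with $p(t)$ given by \eqref{eq:p0half}, genuinely satisfies the defining conditions \eqref{eq:halpha}--\eqref{eq:hzeros} of a valid symmetric 1:1-periodic orbit whenever $(\theta,c) \in \mathcal{S}$. Because Theorem~\ref{thm:po_closedform} already supplies an explicit closed-form candidate for any orbit with $\alpha \in [\tfrac{1}{4}, \tfrac{3}{4}] \pmod 1$ and $\theta \in [0, \tfrac{1}{2})$, no further construction is required; the entire proof is a verification that on this parameter region the candidate has the correct sign pattern and that $\alpha$ is a genuine upward crossing. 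I would first treat $\theta \in [\tfrac{1}{4}, \hat{\theta}]$ in detail, and then invoke Corollary~\ref{cor:sym2} with $\theta^* = \tfrac{1}{2} - \theta$ to transfer the result to $\theta \in [\tfrac{1}{2} - \hat{\theta}, \tfrac{1}{4})$.

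For the setup, $\alpha \in [\tfrac{1}{4}, \tfrac{3}{4}]\pmod 1$ forces $\cos(2\pi\alpha) \le 0$, so the relevant branch is \eqref{eq:c0proof3}, giving $c\cos(2\pi\alpha) = -\sqrt{c^2 - u^2(\theta)}$. The lower bound $c^2 \ge u^2(\theta)$ in \eqref{eq:ucwp} ensures that $\alpha$ exists, and from \eqref{eq:hdashalph} we see that $\alpha$ is a nondegenerate upward zero precisely when $1 - \sqrt{c^2 - u^2(\theta)} \ge 0$, i.e.\ $c^2 \le 1 + u^2(\theta)$. A short check, using $w^2_+(\tfrac{1}{4}) = 1 = 1 + u^2(\tfrac{1}{4})$, the endpoint value $w^2_+(\hat{\theta}) = \tfrac{\pi^2}{4} < 1 + u^2(\hat{\theta})$, and the monotonicity of $w^2_+$, shows that $w^2_+(\theta) \le 1 + u^2(\theta)$ throughout $[\tfrac{1}{4}, \hat{\theta}]$, so this upper constraint is automatically enforced within $\mathcal{S}$.

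The core of the proof is verifying the sign pattern \eqref{eq:hsigns}. By Proposition~\ref{prop:n:1symmetric} and the symmetry \eqref{eq:sym}, it suffices to show that the candidate $h(t)$ is strictly positive on $(\alpha, \alpha + \tfrac{1}{2})$. The candidate consists of three smooth arcs corresponding to the three pieces of \eqref{eq:p0half}; on each arc I would locate the interior critical points by solving $h'(t) = 0$, substitute back into $h$, and simplify using $c\sin(2\pi\alpha) = u(\theta)$, $c\cos(2\pi\alpha) = -\sqrt{c^2 - u^2(\theta)}$ together with the standard angle-addition formulas to express the minimum value as an explicit function of $c$ and $\theta$. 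Setting that minimum equal to zero then yields precisely the implicit equation \eqref{eq:boundaryIc(theta)2}, with the ``$+$'' sign arising from the sign choice $c\cos(2\pi\alpha) = -\sqrt{c^2 - u^2(\theta)}$ (in contrast to the ``$-$'' sign appearing in \eqref{eq:boundaryIc(theta)} for Theorem~\ref{thm:po}, which uses the opposite branch). For $(\theta,c)$ strictly inside $\mathcal{S}$ the extrema remain strictly positive, establishing \eqref{eq:hsigns}. The boundary data \eqref{eq:w+2boundary} and the monotonicity of $w^2_+$ would be read off from the same implicit relation by specialisation and an implicit function calculation.

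The main obstacle I anticipate is identifying, as $c$ increases through $\mathcal{S}$, which of the three smooth arcs of $h(t)$ first develops an extremum that reaches zero, and then organising the algebra cleanly enough to recover \eqref{eq:boundaryIc(theta)2} in the stated form. Structurally the calculation is parallel to the corresponding step in the proof of Theorem~\ref{thm:po}, with the only substantive change being the sign of the square root in $c\cos(2\pi\alpha)$; this parallelism gives a concrete template to follow. Once this identification is in place, the remaining items --- absence of extraneous degenerate zeros strictly inside $\mathcal{S}$ and the symmetric extension via Corollary~\ref{cor:sym2} --- are routine.
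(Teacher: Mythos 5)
Your proposal follows essentially the same route as the paper's proof: reduce to $\theta\in[\tfrac14,\hat\theta]$ via Corollary~\ref{cor:sym2}, take the branch $c\cos(2\pi\alpha)=-\sqrt{c^2-u^2(\theta)}$ from \eqref{eq:c0proof3}, use the symmetry to reduce everything to positivity of the candidate on $(\alpha,\alpha+\tfrac12)$, check this arc by arc (the second arc is monotonically decreasing to zero, so only the interior minimum $t_2$ of the first arc matters), and recover \eqref{eq:boundaryIc(theta)2} as the condition that $h(t_2)$ touches zero, with the sign flip relative to \eqref{eq:boundaryIc(theta)} coming exactly from the branch choice for $c\cos(2\pi\alpha)$. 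That is the paper's argument.

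One step as you describe it would not go through. You infer $w_+^2(\theta)\le 1+u^2(\theta)$ on $[\tfrac14,\hat\theta]$ from the two endpoint values together with the monotonicity of $w_+^2$; but $1+u^2(\theta)$ is also increasing on this interval, and two increasing functions that are correctly ordered at both endpoints need not be ordered in between, so this deduction is invalid as stated. The paper obtains the fact by a dynamical observation instead: $h(t_2)\ge 0$ forces $h(t_1)>0$ and hence $h'(\alpha)>0$, whereas $c^2=1+u^2(\theta)$ gives $h'(\alpha)\le 0$, so the boundary curve $c^2=w_+^2(\theta)$ necessarily lies strictly below $c^2=1+u^2(\theta)$. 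A second, smaller point you gloss over is that placing the minimum $t_2$ inside the first arc requires showing $t_2<\alpha+\theta$, which in the paper rests on the elementary but non-obvious bound $u(\theta)\sin(2\pi\theta)<1$ for $\theta\in[\tfrac14,\tfrac12]$; you flag this localisation as the ``main obstacle'' but do not indicate how to resolve it.
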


The proof of Theorem~\ref{thm:po2} is similar to that of Theorem~\ref{thm:po}, and for the same reasons is 
provided in Appendix~\ref{sec:pf:po2}.

Figure~\ref{fig:RegEx2}(a) shows the region \( \mathcal{S} \) (hashed orange), overlaid on the region \( \mathcal{R} \) (light blue) for \( \theta \in [0, \tfrac{1}{2}] \). The upper boundary of \( \mathcal{S} \) (pink) is determined by the equation described in \ref{enumerate:1'}. Hence, the region \( \mathcal{S} \) is bounded, as expected from \eqref{eq:necc2}. This contrasts with orbits of phase \( \alpha \in [-\tfrac{1}{4}, \tfrac{1}{4}] \), which exist for all sufficiently large values of \( c \).
%, as shown in Corollary~\ref{cor:Rlargec}.

Within region \( \mathcal{S} \) for $c^2>u^2(\theta)$,
two distinct symmetric 1:1-periodic orbits coexist, one with phase \( \alpha_1 \in (-\tfrac{1}{4}, \tfrac{1}{4}) \), and the other with \( \alpha_2 \in (\tfrac{1}{4}, \tfrac{3}{4}) \).
Along the straight lines defined by $u^2(\theta) = c^2$, both orbits share the same phase: $\alpha = -\tfrac{1}{4}$ for $\theta \in (0, \tfrac{1}{4})$, and $\alpha = \tfrac{1}{4}$ for $\theta \in (\tfrac{1}{4}, \tfrac{1}{2})$; hence
by Theorems~\ref{thm:po} and~\ref{thm:po2} they are identical.
This behaviour is illustrated further in 
Figure~\ref{fig:RegEx2}(b-c), which display 
a one-parameter continuation in $c$ of these orbits for fixed $\theta=0.36$.
A fold
bifurcation of periodic orbits
is observed
at the curve defined by \( u^2(\theta) = c^2 \).
Following the upper branch in (c), we see that the 1:1-periodic orbit ceases to exist at the upper boundary of region \( \mathcal{S} \), defined by \eqref{eq:boundaryIc(theta)2}. At this boundary, two additional zeros of the orbit with phase \( \alpha \in [\tfrac{1}{4}, \tfrac{3}{4}] \) collide with the zero line. In contrast, the orbit with phase \( \alpha \in [-\tfrac{1}{4}, \tfrac{1}{4}] \) continues to exist for $c$ arbitrarily large
(see Theorem~\ref{thm:stabc}).

Figure~\ref{fig:RegEx2}(d) shows the two coexisting 1:1-periodic orbits 
which are nearly identical near the fold bifurcation. In contrast, Figure~\ref{fig:RegEx2}(e)
shows the two coexisting orbits near the upper boundary of \( \mathcal{S} \), 
which have very different profiles
because
two local extrema of the orbit with phase \( \alpha \in [-\tfrac{1}{4}, \tfrac{3}{4}] \) approach the zero line, as depicted in the enlargement box.

\section{Stability of 1:1-periodic orbits}
\label{sec:stab11}

In this section, we first study the stability of 1:1-periodic orbits of the psGZT model \eqref{eq:iGZT}, and then compare the stability domains in the $(\tau,c)$-plane for the 
GZT model  \eqref{eq:GZT}, the psGZT model \eqref{eq:iGZT} and for the signum forced psGZT model \eqref{eq:RKA}.

For smooth DDEs with constant delays, stability and local bifurcation theory is well established 
\cite{guo2013bifurcation,HaleLunel93,Smith11}, but that
theory is not directly applicable to the psGZT model \eqref{eq:iGZT} because of the discontinuity in the $\sign$ function. We will investigate the bifurcations that occur in the model indirectly by studying the stability of the 1:1-periodic orbits.

\subsection{Stability of 1:1-periodic orbits of the psGZT model}
\label{sec:stab11igzt}

We first examine the stability of 1:1-periodic orbits of the psGZT model \eqref{eq:iGZT}, focusing on identifying the precise points in the $(\tau,c)$-plane where these orbits lose stability.

The stability of a periodic orbit can be determined by analyzing the evolution of perturbations using the time-dependent linearization of the system along the orbit. The key to doing this is to realise that solutions of the psGZT model \eqref{eq:iGZT} are completely characterised by the location of their nondegenerate zeros. Thus we will study how the zeros of a perturbation 
of a 1:1-periodic orbit evolve over successive periods. This will result
in a linear map, whose solutions yield the Floquet multipliers, which indicate how perturbations grow or decay over time. The periodic orbit is stable if all the Floquet multipliers lie within the unit circle (aside from the trivial multiplier always located at 1), and it is unstable if there is at least one Floquet multiplier outside the unit circle.

\begin{figure}
    \centering
    \includegraphics[width=\textwidth]{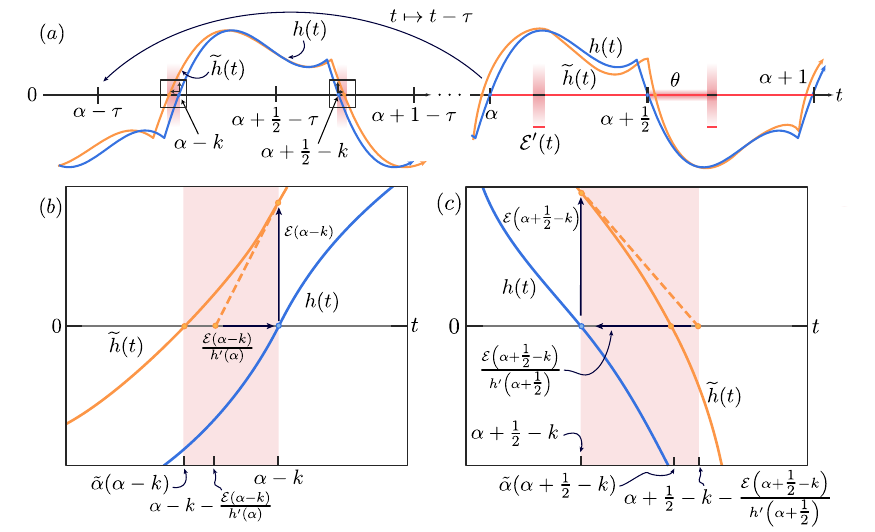}
    \caption{(a) Integration of $\mathcal{E}'(t)$ over $(\alpha, \alpha+1)$ with symmetric 1:1-periodic orbit $h(t)$ (blue), perturbed $\widetilde{h}(t)$ (orange), and $\mathcal{E}'(t) $ (red), for the case $\theta\in(0,\frac{1}{2})$. Differences of sign between $h(t-\tau)$ and $\widetilde{h}(t-\tau)$ imply that $|\mathcal{E}'(t)|=2$. (b) and (c) Linearized $h(t)$ and $\widetilde{h}(t)$ near $t = \alpha-k$ and $t = \alpha + \frac{1}{2}-k$.  
   }
    \label{fig:pert}
\end{figure}

Let $h(t)$ be a 1:1-periodic orbit which satisfies \eqref{eq:iGZT} and has no degenerate zeros, and let $\widetilde{h}(t)$ be a small perturbation of this orbit which satisfies
\be \label{eq:iGZTt}
\widetilde{h}'(t)=-\sign(\widetilde{h}(t-\tau))+c\cos(2\pi t).
\ee
Let
\be \label{eq:Et}
\cE(t)=\widetilde{h}(t)-h(t)
\ee
denote the difference between these solutions. To study the stability of the periodic orbit it is sufficient to study the
evolution of $\cE(t)$. But from \eqref{eq:iGZT} and \eqref{eq:iGZTt} we have
$$
\cE'(t)=\sign(h(t-\tau))-\sign(\widetilde{h}(t-\tau)),
$$ and thus $\cE'(t)=0$ unless
$\sign(h(t-\tau))\ne\sign(\widetilde{h}(t-\tau))$.
But for $\widetilde{h}(t)\approx h(t)$ the signs of the two functions will only differ near to the zeros of $h(t)$, which occur at $t=\alpha+k/2$ for $k\in\Z$. 

Assuming that $\tau\ne j/2$ for $j\in\N$ 
we obtain that
$h(\alpha)=0$ implies $h(\alpha-\tau)\ne0$. Then
$|h(t)-\widetilde{h}(t)|\ll1$
implies that $\sign(\widetilde{h}(\alpha-\tau))=\sign(h(\alpha-\tau))$.
Thus
$\widetilde{h}'(\alpha)=h'(\alpha)$ and, moreover, 
$\widetilde{h}'(t)=h'(t)$ in a neighbourhood of $t=\alpha$.

Let $\widetilde{\alpha}(\alpha+\tfrac{k}{2})$ denote the zero of $\widetilde{h}(t)$ close to $t=\alpha+\tfrac{k}{2}$. 
Applying one-step of Newton-Raphson to find $\widetilde{\alpha}(\alpha)$ starting from $x_0=\alpha$, we obtain
$$x_1=x_0-\frac{\widetilde{h}(x_0)}{\widetilde{h}'(x_0)}
=\alpha-\frac{\widetilde{h}(\alpha)}{{h}'(\alpha)}
$$
with (see for example Equation (23) in Section 3.1.2 of \cite{IK66}) error bound
%$$\ssout{|x_1-\widetilde{\alpha}(\alpha)|\leq \left|\frac{h''(\xi)}{2h'(\xi)}\right|(x_1-x_0)^2,}$$
$$|x_1-\widetilde{\alpha}(\alpha)|=
\cO\big((x_1-x_0)^2\big).$$
Thus
$$
\left|\widetilde{\alpha}(\alpha)
-\left(\alpha-\frac{\widetilde{h}(\alpha)}{h'(\alpha)} \right) \right| 
=  \cO\left(\Big(\alpha
-\big(\alpha-\frac{\widetilde{h}(\alpha)}{h'(\alpha)} \Big) \Big)^2\right)
=\cO\left(\Big|\frac{\widetilde{h}(\alpha)}{h'(\alpha)}\Big|^2\right).$$
But, since $\widetilde{h}(\alpha)=\mathcal{E}(\alpha)$, this implies that
\be \label{eq:thalpha=0}
\widetilde{\alpha}(\alpha)
= \alpha-\frac{\widetilde{h}(\alpha)}{h'(\alpha)} 
+\cO(\mathcal{E}(\alpha)^2).
\ee

Similarly, for $\tau\ne k/2$ for $k\in\N$  we obtain that
$h(\alpha+\tfrac12)=0$ implies $h(\alpha+\tfrac12-\tau)\ne0$. 
Then
$\sign(\widetilde{h}(t))=\sign(h(t))$
when $|h(t)-\widetilde{h}(t)|\ll1$ and $t$ is in a neighbourhood of $\alpha+\tfrac12-\tau$.
Then
\be \label{eq:thalphahalf=0}
\widetilde{\alpha}(\alpha+\tfrac12)
= \alpha+\frac12-\frac{\widetilde{h}(\alpha+\tfrac12)}{h'(\alpha+\tfrac12)}
+\cO(\cE(\alpha+\tfrac12)^2)
= \alpha+\frac12+\frac{\widetilde{h}(\alpha+\tfrac12)}{h'(\alpha)}
+\cO(\cE(\alpha+\tfrac12)^2)
\ee
by using the symmetry of the solution.
Thus, for a small perturbation $|\cE(t)|\ll1$, to leading order, the positions of the zeros of $\widetilde{h}(t)$ are directly related to the value of $\widetilde{h}(t)$ at the nearby zero of $h(t)$,
as shown in Figure~\ref{fig:pert}.
This allows us to determine the intervals on which the signs of $h(t)$ and $\widetilde{h}(t)$ differ, just from the values of $h'$ and $\widetilde{h}$ at the zero.
With this we can calculate the evolution of $\cE(t)$ over one period. Only the values of $\cE(t)$ at the two zeros per period are of interest, as these determine the intervals on which $h(t-\tau)$ and $\widetilde{h}(t-\tau)$ have different signs.

\begin{prop} \label{prop:cEformulation}
Let $h(t)$  be a 1:1-periodic orbit of \eqref{eq:iGZT} with no degenerate 
zeros, and let $\widetilde{h}(t)$ and $\cE(t)$ be defined by \eqref{eq:iGZTt} and \eqref{eq:Et}. Let $m\in{\Z}$ 
and let $\tau=k+\theta$ where $k\in\N_0$. 
For $\theta\in(0, \frac{1}{2})$, to leading order,
\be
\label{eq:cE_0_12_1}
\cE(m+1+\alpha)=\cE(m+\alpha)-\frac{2\cE(m-k+\alpha)}{h'(\alpha)}-\frac{2\cE(m-k+\alpha+\tfrac12)}{h'(\alpha)},
\ee
and
\be
\label{eq:cE_0_12_2}
\cE(m+\tfrac32+\alpha)=\cE(m+\tfrac12+\alpha)
-\frac{2\cE(m-k+\alpha+\tfrac12)}{h'(\alpha)} -\frac{2\cE(m-k+\alpha+1)}{h'(\alpha)}.
\ee
  For $\theta\in( \frac{1}{2},1)$, to leading order, 
    \be
\label{eq:cE_12_1_1}
\cE(m+1+\alpha)=\cE(m+\alpha)-\frac{2\cE(m-k-\tfrac12+\alpha)}{h'(\alpha)}
-\frac{2\cE(m-k+\alpha)}{h'(\alpha)},
\ee
and
\be
\label{eq:cE_12_1_2}
\cE(m+\tfrac32+\alpha)=\cE(m+\tfrac12+\alpha)
-\frac{2\cE(m-k+\alpha)}{h'(\alpha)} -\frac{2\cE(m-k+\alpha+\tfrac12)}{h'(\alpha)}.
\ee
\end{prop}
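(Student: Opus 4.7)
The plan is to integrate the relation $\cE'(t)=\sign(h(t-\tau))-\sign(\widetilde h(t-\tau))$ over one period at a time, using the fact, already observed in the text, that this integrand vanishes except on short intervals between zeros of $h(\cdot-\tau)$ and nearby zeros of $\widetilde h(\cdot-\tau)$. Since $h$ has nondegenerate zeros only at the points $\alpha+j/2$, $j\in\Z$, the zeros of $h(\cdot-\tau)$ in a given unit interval are also known explicitly once $\tau=k+\theta$ is fixed, and so the contributing subintervals can be located exactly.

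First I would locate the relevant zeros. For $\theta\in(0,\tfrac12)$ and $t\in[m+\alpha,m+1+\alpha]$, solving $t-\tau\in\{\alpha+j/2\}$ yields exactly two crossings of $h(\cdot-\tau)$, namely at the preimages of $z_1=m-k+\alpha$ and $z_2=m-k+\alpha+\tfrac12$. Similarly, on $[m+\tfrac12+\alpha,m+\tfrac32+\alpha]$ the two preimages sit at $z_2$ and $z_3=m-k+\alpha+1$. For $\theta\in(\tfrac12,1)$ the two crossings shift by a half-period and the preimages become $m-k-\tfrac12+\alpha,\,m-k+\alpha$ on the first interval and $m-k+\alpha,\,m-k+\alpha+\tfrac12$ on the second, giving \eqref{eq:cE_12_1_1}–\eqref{eq:cE_12_1_2}.

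Next I would compute the contribution of each crossing. Near a zero $z$ of $h$ the Newton estimates \eqref{eq:thalpha=0}–\eqref{eq:thalphahalf=0} give the nearby zero of $\widetilde h$ as
\begin{equation*}
\widetilde z = z-\frac{\cE(z)}{h'(z)}+\cO(\cE(z)^2),
\end{equation*}
where the symmetry \eqref{eq:sym} gives $h'(z)=\pm h'(\alpha)$ (plus sign at upward crossings, minus at downward ones). On the small interval between $z+\tau$ and $\widetilde z+\tau$ the integrand is $\pm2$. A short case analysis on the signs of $\cE(z)$ and of $h'(z)$ then shows that in all four cases the signed contribution to $\int \cE'(t)\,dt$ equals
\begin{equation*}
-\frac{2\cE(z)}{h'(\alpha)},
\end{equation*}
regardless of whether the crossing is upward or downward and regardless of the sign of the perturbation. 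Summing the two contributions on each period-interval then produces \eqref{eq:cE_0_12_1}–\eqref{eq:cE_12_1_2}.

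The main obstacle I expect is this sign verification: one must keep careful track of which of $h(t-\tau)$ and $\widetilde h(t-\tau)$ is positive on each small subinterval, so that both the sign of the integrand and the sign of the interval length (induced by the sign of $\cE(z)/h'(z)$) are handled consistently. Once the single identity above is established for a generic transversal crossing, the remainder is just bookkeeping: identifying the two relevant zeros of $h$ in each period (which differs between the $\theta\in(0,\tfrac12)$ and $\theta\in(\tfrac12,1)$ cases) and substituting. The hypotheses $\tau\ne j/2$ and the absence of degenerate zeros of $h$, together with the bound $|x_1-\widetilde\alpha|=\cO((x_1-x_0)^2)$ in the text, justify retaining only the leading-order terms.
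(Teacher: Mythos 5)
Your proposal is correct and follows essentially the same route as the paper's proof: integrate $\cE'(t)=\sign(h(t-\tau))-\sign(\widetilde h(t-\tau))$ over each half-period-shifted unit interval, locate the two sign changes of $h(\cdot-\tau)$ in that interval (which is where the $\theta\in(0,\tfrac12)$ and $\theta\in(\tfrac12,1)$ cases differ), and use the Newton estimates \eqref{eq:thalpha=0}--\eqref{eq:thalphahalf=0} together with $h'(\alpha+\tfrac12)=-h'(\alpha)$ to show each crossing contributes $-2\cE(z)/h'(\alpha)$ to leading order. The sign bookkeeping you flag as the main obstacle does work out uniformly across all four cases, exactly as the paper's computation confirms.
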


\begin{proof}
 Let $m\in{\Z}$ and let $\tau=k+\theta$ where $k\in\N_0$. Then
\begin{align*}
\cE(m+1+\alpha)&=\cE(m+\alpha)+\int_{m+\alpha}^{m+\alpha+1}\cE'(t)dt=\cE(m+\alpha)+\int_{m+\alpha}^{m+\alpha+1}\widetilde{h}'(t)-h'(t)dt\\
&=\cE(m+\alpha)+\int_{m-k+\alpha-\theta}^{m-k+\alpha-\theta+1}\sign(h(t))-\sign(\widetilde{h}(t))dt.
\end{align*} 

If $\theta\in(0, \frac{1}{2})$, then $h(t)$ changes sign on the integration interval at
$t=m-k+\alpha$ and $t=m-k+\alpha+\tfrac12$. Applying \eqref{eq:thalpha=0} and \eqref{eq:thalphahalf=0} for these points (and noting that $h(t)$ is periodic, but $\widetilde{h}(t)$ is not)
we obtain
\begin{align} \notag
\cE(m+1&+\alpha)\\ \notag
& = \cE(m+\alpha)
+\hspace*{-1.5em}\int\displaylimits_{\widetilde{\alpha}(m-k+\alpha)}^{m-k+\alpha}\hspace*{-1.3em}\sign(h(t))-\sign(\widetilde{h}(t))dt
+\hspace*{-2em}\int\displaylimits_{m-k+\alpha+\tfrac12}^{\widetilde{\alpha}(m-k+\alpha+\tfrac12)}\hspace*{-1.5em}\sign(h(t))-\sign(\widetilde{h}(t))dt\\   \notag
&=\cE(m+\alpha)
+\hspace*{-2em}\int\displaylimits_{m-k+\alpha-\frac{\cE(m-k+\alpha)}{h'(\alpha)}}^{m-k+\alpha}\hspace*{-3em}\sign(h(t))-\sign(\widetilde{h}(t))dt
+\hspace*{-3em}\int\displaylimits_{m-k+\alpha+\tfrac12}^{m-k+\alpha+\tfrac12-\frac{\cE(m-k+\alpha+1/2)}{h'(\alpha+1/2)}}\hspace*{-4.5em}\sign(h(t))-\sign(\widetilde{h}(t))dt\\ \notag
& \qquad +\cO({\cE(m-k+\alpha)^2})+\cO(\cE(m-k+\alpha+\tfrac12)^2)\\ \notag
& = \cE(m+\alpha)-\frac{2\cE(m-k+\alpha)}{h'(\alpha)}-\frac{2\cE(m-k+\alpha+\tfrac12)}{h'(\alpha)}\\ 
& \qquad +\cO(\cE(m-k+\alpha)^2)+\cO(\cE(m-k+\alpha+\tfrac12)^2),
\quad \theta\in(0, \tfrac{1}{2}).
\label{eq:recalphazero}
\end{align}

On the other hand, if $\theta\in(\frac{1}{2},1)$, then $h(t)$ changes sign at
$t=m-k-\tfrac12+\alpha$ and $t=m-k+\alpha$
on the integration interval.  Hence,
\begin{align} \notag
\cE(m+1+\alpha)& =\cE(m+\alpha)-\frac{2\cE(m-k-\tfrac12+\alpha)}{h'(\alpha)}
-\frac{2\cE(m-k+\alpha)}{h'(\alpha)}\\  \label{eq:recalphaone} 
& \qquad  +\cO(\cE(m-k-\tfrac12+\alpha)^2)+\cO(\cE(m-k+\alpha)^2),
\quad \theta\in(\tfrac{1}{2},1).
\end{align}

Similarly, for the other zero we obtain
\begin{align*}
\cE(m+\alpha+\tfrac32)&=\cE(m+\alpha+\tfrac12)+\int_{m+\alpha+\tfrac12}^{m+\alpha+\tfrac32}\cE'(t)dt\\
&=\cE(m+\alpha+\tfrac12)+\int_{m+\alpha+\tfrac12}^{m+\alpha+\tfrac32}\widetilde{h}'(t)-h'(t)dt\\
&=\cE(m+\alpha+\tfrac12)+\int_{m-k+\alpha+\tfrac12-\theta}^{m-k+\alpha-\theta+\tfrac32}\sign(h(t))-\sign(\widetilde{h}(t))dt.
\end{align*}
Then, for $\theta\in(0, \frac{1}{2})$ the solution $h(t)$ changes sign on the integration interval
at
$t=m-k+\alpha+\tfrac12$ and $t=m-k+\alpha+1$, while for $\theta\in(\frac{1}{2},1)$ the sign changes occur at $t=m-k+\alpha$ and $t=m-k+\alpha+\tfrac12$.

Hence, we find that
\begin{align} \notag
\cE(m+\tfrac32+\alpha)&=\cE(m+\tfrac12+\alpha)
-\frac{2\cE(m-k+\alpha+\tfrac12)}{h'(\alpha)} -\frac{2\cE(m-k+\alpha+1)}{h'(\alpha)}
\\ \label{eq:rechalfzero}
& \qquad +\cO(\cE(m-k+\alpha+\tfrac12)^2)+\cO(\cE(m-k+\alpha+1)^2),
\quad \theta\in(0, \tfrac{1}{2}).
\end{align}
and
\begin{align} \notag 
\cE(m+\tfrac32+\alpha)&=\cE(m+\tfrac12+\alpha)
-\frac{2\cE(m-k+\alpha)}{h'(\alpha)} -\frac{2\cE(m-k+\alpha+\tfrac12)}{h'(\alpha)}
\\  \label{eq:rechalfone}
& \qquad +\cO(\cE(m-k+\alpha)^2)+\cO(\cE(m-k+\alpha+\tfrac12)^2),
\quad \theta\in(\tfrac{1}{2},1).
\end{align}
The result follows upon dropping the higher-order nonlinear terms.
\end{proof}

In both cases we have a pair of coupled linear difference equations, which can be solved by making the ansatz
\be \label{eq:ansatz}
\left(\begin{array}{c}
\cE(m+\alpha)\\
\cE(m+\alpha+\tfrac{1}{2})
\end{array}\right)=\lambda^m\left(\begin{array}{c}
v_1\\
v_2
\end{array}\right).
\ee

We remark that since $\cE'(t)=0$ whenever $\sign(h(t-\tau))=\sign(\widetilde h(t-\tau))$, if $\lambda$ satisfies
\eqref{eq:ansatz} then $\cE(t+1)=\lambda \cE(t)$ for any $t>0$, provided $|\cE(t)|\ll1$ so that the linear approximation is valid.
Any $\lambda$ that satisfies \eqref{eq:ansatz} is then a Floquet multiplier of the periodic orbit, and these determine its stability.

\begin{thm}
    \label{thm:Poly}
    Let $\tau=k+\theta$, $k\in\mathbb{N}_0$. The characteristic polynomial $\Delta(\lambda)$ associated with \eqref{eq:ansatz} satisfies
    $\Delta(\lambda)=[\lambda-1]\Delta_{2k+1}(\lambda)$ for $\theta\in(0, \frac{1}{2})$
    and $\Delta(\lambda)=[\lambda-1]\Delta_{2k+2}(\lambda)$ for $\theta\in(\frac{1}{2},1)$,
    where
\begin{align} \label{eq:polynomiallambda2kp1}
\Delta_{2k+1}(\lambda)&=
(\lambda-1)\lambda^{2k}+\frac{4}{h'(\alpha)}\lambda^k-\frac{4}{[h'(\alpha)]^2}, \\ \label{eq:polynomiallambda2kp2}
\Delta_{2k+2}(\lambda)&=
(\lambda-1)\lambda^{2k+1}+\frac{4}{h'(\alpha)}\lambda^{k+1}+\frac{4}{[h'(\alpha)]^2},
\end{align}
given that $h'(\alpha)>0$. 
\end{thm}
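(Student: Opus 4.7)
The plan is to substitute the ansatz \eqref{eq:ansatz} directly into the pair of linear difference equations obtained in Proposition~\ref{prop:cEformulation}, reducing the question to a $2\times 2$ homogeneous linear system in $(v_1,v_2)$ whose determinant gives $\Delta(\lambda)$, and then to factor out the trivial Floquet multiplier $\lambda=1$.

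First I would treat the case $\theta\in(0,\tfrac12)$. Note that the ansatz \eqref{eq:ansatz} gives $\mathcal{E}(m-k+\alpha)=\lambda^{m-k}v_1$, $\mathcal{E}(m-k+\alpha+\tfrac12)=\lambda^{m-k}v_2$ and $\mathcal{E}(m-k+\alpha+1)=\lambda^{m-k+1}v_1$. Substituting these into \eqref{eq:cE_0_12_1}–\eqref{eq:cE_0_12_2} and dividing through by $\lambda^{m-k}$ yields a linear system $M(\lambda)(v_1,v_2)^T=0$ with
\[
M(\lambda)=\begin{pmatrix}
\lambda^k(\lambda-1)+\tfrac{2}{h'(\alpha)} & \tfrac{2}{h'(\alpha)} \\[2pt]
\tfrac{2\lambda}{h'(\alpha)} & \lambda^k(\lambda-1)+\tfrac{2}{h'(\alpha)}
\end{pmatrix},
\]
where the asymmetry between the off-diagonal entries comes from the factor of $\lambda$ picked up by $\mathcal{E}(m-k+\alpha+1)$. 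A nontrivial solution exists precisely when $\Delta(\lambda):=\det M(\lambda)=0$. Expanding the determinant and collecting terms gives
\[
\Delta(\lambda)=\lambda^{2k}(\lambda-1)^2+\tfrac{4\lambda^k(\lambda-1)}{h'(\alpha)}-\tfrac{4(\lambda-1)}{[h'(\alpha)]^2},
\]
from which the factor $(\lambda-1)$ can be pulled out, delivering exactly $\Delta_{2k+1}(\lambda)$ in \eqref{eq:polynomiallambda2kp1}.

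The case $\theta\in(\tfrac12,1)$ proceeds along the same lines, but with one bookkeeping subtlety that I expect to be the main (albeit minor) obstacle: $\mathcal{E}(m-k-\tfrac12+\alpha)$ must be rewritten as $\mathcal{E}((m-k-1)+\alpha+\tfrac12)=\lambda^{m-k-1}v_2$, so that the factor of $\lambda$ shifts from the off-diagonal to the diagonal. Substituting into \eqref{eq:cE_12_1_1}–\eqref{eq:cE_12_1_2} and clearing the common factor of $\lambda^{m-k}$ yields
\[
M(\lambda)=\begin{pmatrix}
\lambda^{k+1}(\lambda-1)+\tfrac{2\lambda}{h'(\alpha)} & \tfrac{2}{h'(\alpha)} \\[2pt]
\tfrac{2}{h'(\alpha)} & \lambda^k(\lambda-1)+\tfrac{2}{h'(\alpha)}
\end{pmatrix},
\]
and a direct expansion gives
\[
\Delta(\lambda)=\lambda^{2k+1}(\lambda-1)^2+\tfrac{4\lambda^{k+1}(\lambda-1)}{h'(\alpha)}+\tfrac{4(\lambda-1)}{[h'(\alpha)]^2},
\]
again factoring as $(\lambda-1)\Delta_{2k+2}(\lambda)$ with $\Delta_{2k+2}$ as in \eqref{eq:polynomiallambda2kp2}.

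I would close the proof by remarking that the factor $(\lambda-1)$ is not fortuitous: it corresponds to the trivial Floquet multiplier arising from time-translation invariance of the nonautonomous DDE along the periodic orbit (equivalently, a uniform time-shift of the nondegenerate zeros is a solution of the linearized zero-evolution map at $\lambda=1$). The hypothesis $h'(\alpha)>0$ is used only to ensure that the linearizations underpinning \eqref{eq:thalpha=0}–\eqref{eq:thalphahalf=0} in Proposition~\ref{prop:cEformulation} are well defined, so no additional work is needed here.
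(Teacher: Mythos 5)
Your proposal is correct and follows essentially the same route as the paper: substituting the ansatz into the difference equations of Proposition~\ref{prop:cEformulation} yields exactly the matrices $A_1$ and $A_2$ of the paper's proof (your $M(\lambda)$ in each case), and the determinant expansion and factoring of $(\lambda-1)$ match. The closing remark about the trivial multiplier $\lambda=1$ corresponding to time translation is also consistent with the paper's discussion following the theorem.
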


\begin{proof}
In the case that $\tau=k+\theta$ with $\theta\in(0, \frac{1}{2})$, substituting
\eqref{eq:cE_0_12_1} and \eqref{eq:cE_0_12_2} into
\eqref{eq:ansatz} results in
$$A_1\left(\!\begin{array}{c}
v_1\\
v_2
\end{array}\!\right)=\left(\begin{array}{cc}
\lambda^{k+1}-\lambda^k+\tfrac{2}{h'(\alpha)}  & \frac{2}{h'(\alpha)} \\
\frac{2\lambda}{h'(\alpha)} & \lambda^{k+1}-\lambda^k+\tfrac{2}{h'(\alpha)}
\end{array}\right)\!\left(\!\begin{array}{c}
v_1\\
v_2
\end{array}\!\right)=\left(\begin{array}{c}
0\\
0
\end{array}\right).$$
For non-trivial solutions we require $\det(A_1)=0$. This leads to
$$0=\det(A_1)=[\lambda-1]\Bigl[(\lambda-1)\lambda^{2k}+\frac{4}{h'(\alpha)}\lambda^k-\frac{4}{[h'(\alpha)]^2}
\Bigr].$$

In the case that $\tau=k+\theta$ with $\theta\in(\frac{1}{2},1)$, substituting
\eqref{eq:cE_12_1_1} and \eqref{eq:cE_12_1_2} into
\eqref{eq:ansatz} results in
$$A_2\left(\!\begin{array}{c}
v_1\\
v_2
\end{array}\!\right)=\left(\begin{array}{cc}
\lambda^{k+2}-\lambda^{k+1}+\tfrac{2}{h'(\alpha)}\lambda  & \frac{2}{h'(\alpha)} \\
\frac{2}{h'(\alpha)} & \lambda^{k+1}-\lambda^k+\tfrac{2}{h'(\alpha)}
\end{array}\right)\!\left(\!\begin{array}{c}
v_1\\
v_2
\end{array}\!\right)=\left(\begin{array}{c}
0\\
0
\end{array}\right),$$
and for non-trivial solutions we require
\[0=\det(A_2)=[\lambda-1]\Bigl[(\lambda-1)\lambda^{2k+1}+\frac{4}{h'(\alpha)}\lambda^{k+1}+\frac{4}{[h'(\alpha)]^2}
\Bigr].\]

\vspace*{-3ex}
\qedhere
\end{proof}

Thus there is one root $\lambda=1$ of $\Delta$ (which corresponds to translation along the orbit for the linearised equation), with remaining
Floquet multipliers satisfying  $\Delta_{2k+1}(\lambda)=0$  for $\theta\in(0,\tfrac12)$
where $\Delta_{2k+1}$ defined by \eqref{eq:polynomiallambda2kp1}
is a degree $2k+1$ polynomial, 
and  $\Delta_{2k+2}(\lambda)=0$  for $\theta\in(\tfrac12,1)$
where $\Delta_{2k+2}$ defined by \eqref{eq:polynomiallambda2kp2}
is a degree $2k+2$ polynomial.

We will not attempt to find all the roots of the characteristic polynomial $\Delta$. Rather we are interested in changes of stability and bifurcations. These occur when Floquet multipliers cross the unit circle where $|\lambda|=1$. The classical Floquet theory
is well established for bifurcations of periodic orbits in finite-dimensional dynamical systems, including for periodically forced systems \cite{kuznetsov1998elements}, and the extension to DDEs is also well studied
\cite{guo2013bifurcation}. The Floquet theory framework also extends to discontinuous bifurcations of periodic orbits, where Floquet multipliers may jump abruptly across stability boundaries \cite{LEINE2002259}.

There are three types of codimension-1 bifurcations of periodic orbits that we briefly introduce. When a single real Floquet multiplier crosses at $+1$, any of the basic bifurcations (saddle-node, transcritical, pitchfork) are possible, depending on the underlying symmetries. For
the psGZT model 
we generally observe a saddle-node bifurcation of orbits, where two orbits of different stability collide and annihilate each other. 
When a real Floquet multiplier crosses $-1$, a period-doubling bifurcation occurs and a new periodic orbit is created with twice the period of the original one.
Lastly, when a complex pair of Floquet multipliers 
$\lambda = e^{\pm 2\pi i \rho}$ with $\rho\in(0,\tfrac12)$, crosses the unit circle
an invariant torus is created in a torus bifurcation. 
(If $\rho=\tfrac13$ or $\rho=\tfrac14$ a strong resonance occurs, and more complicated bifurcation structures arise, but we will not be concerned with these).

We begin by showing that the symmetric 1:1-periodic orbits, shown to exist 
in Theorem~\ref{thm:po2}, are always unstable,
while the
symmetric 1:1-periodic orbits, shown to exist 
in Theorem~\ref{thm:po}, are stable for all sufficiently large $c$.

\begin{thm} \label{thm:unstab11}
A symmetric 1:1-periodic orbit with phase $ \alpha \in \left(\tfrac{1}{4}, \tfrac{3}{4}\right) \pmod 1$, no degenerate zeros, and delay $ \tau > 0$, but $\tau$ not an integer or half integer (so $\tau\neq j/2$, $ j \in \mathbb{N} $)
is unstable.
\end{thm}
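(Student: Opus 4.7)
My plan is to exploit the explicit characteristic polynomial from Theorem~\ref{thm:Poly} to locate a real Floquet multiplier strictly greater than $1$, which immediately gives instability.

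First I would pin down the delay regime. A symmetric 1:1-periodic orbit with phase $\alpha\in(\tfrac14,\tfrac34)\pmod 1$ can only arise for $\theta\in[0,\tfrac12]$: Theorem~\ref{thm:po2} gives the existence region $\cS$, and the argument in Section~\ref{section:parameterconstraintsforperiodicorbits} shows that for $\theta\in(\tfrac12,1)$ the condition $h'(\alpha)\geq 0$ forces $\alpha\in[-\tfrac14,\tfrac14]\pmod 1$. Since the hypothesis $\tau\neq j/2$ rules out $\theta\in\{0,\tfrac12\}$, we must have $\theta\in(0,\tfrac12)$. In this regime, the parameter analysis in Section~\ref{section:parameterconstraintsforperiodicorbits} established that $c\cos(2\pi\alpha)\in[-1,0]$, so by \eqref{eq:hdashalph} we have $h'(\alpha)=1+c\cos(2\pi\alpha)\in[0,1]$. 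The strict inclusion $\alpha\in(\tfrac14,\tfrac34)$ forces $\cos(2\pi\alpha)<0$, while the no-degenerate-zeros assumption gives $h'(\alpha)>0$. Combined, these yield $h'(\alpha)\in(0,1)$ strictly.

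Next I would apply Theorem~\ref{thm:Poly}: the nontrivial Floquet multipliers are the roots of
\[
\Delta_{2k+1}(\lambda)=(\lambda-1)\lambda^{2k}+\frac{4}{h'(\alpha)}\lambda^{k}-\frac{4}{[h'(\alpha)]^2},
\]
a polynomial of degree $2k+1$ with positive leading coefficient. Evaluating at $\lambda=1$,
\[
\Delta_{2k+1}(1)=\frac{4}{h'(\alpha)}-\frac{4}{[h'(\alpha)]^2}=\frac{4(h'(\alpha)-1)}{[h'(\alpha)]^2}<0
\]
since $h'(\alpha)<1$. Because $\Delta_{2k+1}(\lambda)\to+\infty$ as $\lambda\to+\infty$, the intermediate value theorem produces a real root $\lambda_{*}>1$. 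This $\lambda_{*}$ is a Floquet multiplier lying strictly outside the unit circle, so the orbit is unstable.

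The computation itself is elementary once the characteristic polynomial is in hand; the only delicate point is establishing $h'(\alpha)\in(0,1)$ \emph{strictly}, which requires simultaneously invoking the constraint $h'(\alpha)\leq 1$ from Section~\ref{section:parameterconstraintsforperiodicorbits}, the openness of the interval $(\tfrac14,\tfrac34)$ for $\alpha$, and the nondegeneracy hypothesis. Once that is in place, sign-counting at $\lambda=1$ and at $\lambda=+\infty$ plus the IVT closes the argument with no further work.
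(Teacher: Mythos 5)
Your proposal is correct and follows essentially the same route as the paper's proof: restrict to $\theta\in(0,\tfrac12)$, deduce $h'(\alpha)\in(0,1)$ from $\alpha\in(\tfrac14,\tfrac34)$, observe $\Delta_{2k+1}(1)<0$ while $\Delta_{2k+1}(\lambda)\to+\infty$, and conclude by the intermediate value theorem that there is a real Floquet multiplier greater than $1$. The extra care you take in justifying the strict bounds on $h'(\alpha)$ is a welcome refinement but does not change the argument.
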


\begin{proof}
These orbits only exist for  $\tau=k+\theta$ where $k\in\N_0$ and $\theta\in(0,\tfrac12)$ and so their stability is governed by $\Delta(\lambda)=[\lambda-1]\Delta_{2k+1}(\lambda)$ where $\Delta_{2k+1}(\lambda)$ is defined by \eqref{eq:polynomiallambda2kp1}. Notice that
$\Delta_{2k+1}(\lambda)\to+\infty$ as $\lambda\to+\infty$, while
$$\Delta_{2k+1}(1)=\frac{4}{h'(\alpha)}\left(1-\frac{1}{h'(\alpha)}\right).$$
But $ \alpha \in (\tfrac{1}{4}, \tfrac{3}{4})$ implies that
$c\cos(2\pi\alpha)<0$ and, hence, $h'(\alpha)\in(0,1)$. Thus, 
$\Delta_{2k+1}(1)<0$ and it follows that $\Delta_{2k+1}(\lambda)$ has a real root $\lambda>1$ and, hence, the periodic orbit is unstable.
\end{proof}

\begin{thm} \label{thm:stabc}
Symmetric 1:1-periodic orbits with phase $\alpha \in[-\tfrac14,\tfrac14] \pmod 1$ exist for all $c$ sufficiently large for any delay $\tau>0$.
%, no degenerate zeros,  and delay 
For $\tau$ not an integer or half integer (so $\tau\neq j/2$, $ j \in \mathbb{N} $)
they are also asymptotically stable for all $c$ sufficiently large. 
\end{thm}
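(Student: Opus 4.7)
The plan is to establish existence and asymptotic stability separately as $c\to\infty$, treating them as largely independent problems.

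For existence, I will inspect Definition~\ref{def:cR} case by case and verify that every defining inequality has the form $c^2\geq g(\theta)$ with $g$ uniformly bounded on the relevant interval in $\theta$. Items \ref{enumerate:1}--\ref{enumerate:3} are immediate: the thresholds $\pi^2/4+1$, $u^2(\theta)$, and $w^2_-(\theta)$ are all bounded (the last by \eqref{eq:w^-2boundary}). For item (vi), the subtlety is that the quotient appearing in \eqref{eq:cQsin} is of the form $0/0$ at both $\theta=1/2$ and $\theta=1$, but a short Taylor expansion at each endpoint shows the ratio extends continuously with value $-1$, so the right-hand side of \eqref{eq:cQsin} is bounded on $[1/2,1]$. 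The subregion $\theta\in(0,1/4)$ reduces to a previously treated interval via the substitution $\theta\mapsto 1/2-\theta$. Thus for any $\tau=k+\theta$ and all sufficiently large $c$, the point $(\theta,c)$ lies in $\cR$, and Theorem~\ref{thm:po} supplies the desired 1:1-periodic orbit.

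For asymptotic stability I will apply Theorem~\ref{thm:Poly}, which is available because the hypothesis $\tau\neq j/2$ forces $\theta\in(0,1/2)\cup(1/2,1)$. Setting $\epsilon:=1/h'(\alpha)$, I first show $\epsilon\to 0^+$ as $c\to\infty$: combining \eqref{eq:hdashalph} with \eqref{eq:c0proof} or \eqref{eq:c1proof} gives $h'(\alpha)=\pm1+\sqrt{c^2-u^2(\theta)}$ (respectively with $v(\theta)$), which grows linearly in $c$. As $\epsilon\to 0^+$, both $\Delta_{2k+1}(\lambda)$ and $\Delta_{2k+2}(\lambda)$ reduce to $(\lambda-1)\lambda^N$ with $N=2k$ or $2k+1$, exhibiting a simple root at $\lambda=1$ and a root of multiplicity $N$ at $\lambda=0$. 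By continuity of polynomial roots in coefficients (or a Rouché argument on small circles around $0$ and $1$), the perturbed non-trivial Floquet multipliers split into two groups. For the simple root near $\lambda=1$, the implicit function theorem, using $\partial_\lambda\Delta_{2k+1}(1,0)=1$ and $\partial_\epsilon\Delta_{2k+1}(1,0)=4$, yields $\lambda(\epsilon)=1-4\epsilon+O(\epsilon^2)$, strictly inside the unit disk for small $\epsilon>0$; the same conclusion holds for $\Delta_{2k+2}$. For the cluster near $\lambda=0$, a dominant-balance argument gives $\lambda^k\approx 2\epsilon$ for $\Delta_{2k+1}$ (so $|\lambda|=O(\epsilon^{1/k})$) and $\lambda^{2k+1}\approx -4\epsilon^2$ for $\Delta_{2k+2}$ (so $|\lambda|=O(\epsilon^{2/(2k+1)})$), both tending to zero with $\epsilon$.

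The main obstacle I anticipate is ensuring that higher-order corrections do not push the perturbed root near $\lambda=1$ back outside the unit disk. This is resolved by the implicit function theorem, which guarantees that $\lambda(\epsilon)$ is smooth in $\epsilon$ with strictly negative derivative $-4$ at $\epsilon=0$, forcing $\lambda(\epsilon)<1$ for all sufficiently small $\epsilon>0$. The endpoint analysis in item (vi) of Definition~\ref{def:cR} is the other technical but essentially routine point.
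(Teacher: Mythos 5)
Your proof is correct and follows essentially the same route as the paper: existence by verifying that $(\theta,c)\in\cR$ for all sufficiently large $c$, and stability by setting $\epsilon=1/h'(\alpha)\to0^+$ and perturbing from the limiting polynomial $(\lambda-1)\lambda^N$ (the paper handles the root near $\lambda=1$ by showing $\Delta_{2k+1}(1)>0$ and $\Delta_{2k+1}'(\lambda)>0$ for $\lambda\geq1$ rather than via the implicit function theorem, but the two arguments are equivalent). Two trivial points: you should also note, as the paper does, that the constructed orbits have no degenerate zeros for large $c$, since that is a hypothesis of Proposition~\ref{prop:cEformulation} and hence of Theorem~\ref{thm:Poly}; and your dominant balance for $\Delta_{2k+2}$ should read $\lambda^{2k+1}\approx+4\epsilon^2$, a sign slip that does not affect the conclusion $|\lambda|\to0$.
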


\begin{proof}
As established in Theorem~\ref{thm:po} and illustrated in Figure~\ref{fig:RegEx}, the boundary of $\cR$ is contained in the region where $c^2\leq \tfrac{\pi^2}{4} + 1$. Thus, symmetric 1:1-periodic orbits with phase $\alpha\in[-\tfrac 14,\tfrac 14]\pmod 1$ exist for sufficiently large $c$. Moreover, it is shown in the proof of Theorem~\ref{thm:po}
that the constructed 1:1-periodic orbits do not
have any degenerate zeros for $c^2>\tfrac{\pi^2}{4} + 1$. Thus Proposition~\ref{prop:cEformulation} and
Theorem~\ref{thm:Poly} apply, and the stability of the periodic orbit is governed by 
\eqref{eq:polynomiallambda2kp1} or
\eqref{eq:polynomiallambda2kp2}.

Let $\tau=k+\theta$ where $k\in\mathbb{N}_0$. If $\theta\in(0,\tfrac12)$ then from \eqref{eq:hdashalph} and \eqref{eq:c0proof} we have
\be \label{eq:hdashalpcu}
h'(\alpha)=1+\sqrt{c^2-u^2(\theta)},
\ee
while for $\theta\in(\tfrac12,1)$ from \eqref{eq:hdashalph} and \eqref{eq:c1proof} we have
$$h'(\alpha)=-1+\sqrt{c^2-v^2(\theta)}.$$
In both cases $h'(\alpha)\to+\infty$ as $c\to+\infty$.
Hence, for $\theta\in(0,\tfrac12)$ we rewrite $\Delta_{2k+1}$ as
$$\Delta_{2k+1}(\lambda)=
(\lambda-1)\lambda^{2k}+4\epsilon\lambda^k-4\epsilon^2,$$
where $\epsilon=1/h'(\alpha)>0$ and $\epsilon\to0^+$ as $c\to\infty$. When $\epsilon=0$ this polynomial has a root of multiplicity $2k$ at zero and a simple root at $\lambda=1$. By continuity, for $c\gg0$ and so $0<\epsilon\ll1$, $\Delta_{2k+1}$  will have $2k$ roots close to zero. It remains only to consider how the root at $\lambda=1$ perturbs for $\epsilon>0$.  But 
$$\Delta_{2k+1}(1)=4\epsilon(1-\epsilon)>0,$$
when $\epsilon\in(0,1)$, while for all $\lambda\geq1$ and $k\geq0$ we have
$$\Delta^{'}_{2k+1}(\lambda)=\lambda^{k-1}[
(2k+1)\lambda^{k+1}-2k\lambda^k+4\epsilon k]>0.$$
Hence $\Delta(\lambda)>0$ for all $\lambda\geq1$ when $\epsilon\in(0,1)$. It follows that the simple root $\lambda=1$ when $\epsilon=0$ perturbs to a real root $\lambda\in(0,1)$ for $0<\epsilon\ll1$. Hence, for $\epsilon$ sufficiently small, or equivalently for $c$ sufficiently large, all of the non-trivial Floquet multipliers are smaller than $1$ in modulus and the periodic orbit is stable.

For $\theta\in(\tfrac12,1)$ we rewrite $\Delta_{2k+2}$ as
$$\Delta_{2k+2}(\lambda)=
(\lambda-1)\lambda^{2k+1}+4\epsilon\lambda^k+4\epsilon^2,$$
and the remaining steps are analogous to the case 
 $\theta\in(0,\tfrac12)$.
\end{proof}

In the following theorem, we use \eqref{eq:polynomiallambda2kp1},\eqref{eq:polynomiallambda2kp2} to determine the exact coordinates in the $(\tau,c)$-plane at which 
%the first stable pairs of Floquet multipliers cross the unit circle for 
a 1:1-periodic orbit with phase $\alpha\in[-\tfrac14,\frac14]\pmod 1$ becomes unstable in a torus bifurcation or a saddle-node bifurcation of periodic orbits. In the case of the torus bifurcations, we find the associated closed-form expression for the rotation number $\rho$ as parametrized by $\tau=k+\theta$. We determine that the rotation number is constant over intervals where $\theta\in(0, \frac{1}{2})$ and $\theta\in(\frac{1}{2},1)$, and there are jump discontinuities between such plateaus. 

\begin{thm} \label{thm:w}
Consider symmetric 1:1-periodic orbits with no degenerate zeros and phase $ \alpha \in \left[-\tfrac{1}{4}, \tfrac{1}{4}\right] \pmod 1$ and delay $ \tau = k + \theta $, where $k\in\N_0$. For $ \tau > \tfrac12$, but $\tau$ not an integer or half integer (so $\tau\neq j/2$, $ j \in \mathbb{N} $), these orbits lose stability   along a torus bifurcation curve $\textup{\textbf{T}}$, where a complex conjugate pair of Floquet multipliers crosses the unit circle, i.e., $ \lambda = e^{\pm 2\pi i\rho} $, with $\rho \in (0,\tfrac12)$.

The torus bifurcation curve $\textup{\textbf{T}}$ is given by
\begin{align} \label{eq:T}
\begin{dcases*}
c^2=\left(\csc\left({\frac{\pi}{2(4k+1)}}\right)-1\right)^2+u^2(\theta), & \textrm{for}\; $\theta\in(0, \frac{1}{2})$\quad\text{and}\quad $k>0$\\
c^2=\left(\csc\left({\frac{\pi}{2(4k+3)}}\right)+1\right)^2+v^2(\theta), & \textrm{for}\;  $\theta\in(\frac{1}{2},1)$\quad\text{and}\quad $k\geq 0$.
\end{dcases*}\end{align}
Along the curve $\textup{\textbf{T}}$, the rotation number $\rho$ is given by 
\begin{align} \label{eq:w}
\rho&= \begin{dcases*}
\frac{1}{4k+1}, & \textrm{for}\; $\theta\in(0, \frac{1}{2})$\quad\text{and}\quad $k> 0$\\
\frac{1}{4k+3} & \textrm{for}\;  $\theta\in(\frac{1}{2},1)$\quad\text{and}\quad $k\geq 0$.
\end{dcases*}\end{align}

For $ \tau \in \left[\tfrac{1}{2} - \hat{\theta}, \hat{\theta} \right] $, where $ \hat{\theta} \approx 0.4695 $ satisfies \eqref{eq:thetahat}, the orbits lose stability via a saddle-node bifurcation of periodic orbits, denoted $\textup{\textbf{SN}}$. %, as a Floquet multiplier satisfies $ \lambda = 1 $. 
The saddle-node bifurcation curve 
$\textup{\textbf{SN}}$ is given by:
\begin{equation}
    \label{eq:SN}
    {u}^2(\theta)=c^2\,\,\textrm{for}\,\, \theta\in\left[\tfrac{1}{2}-\hat{\theta},\hat{\theta}\right].
\end{equation}
\end{thm}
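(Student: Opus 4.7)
The plan is to locate the first loss of stability as $c$ decreases from infinity by identifying the smallest $\epsilon := 1/h'(\alpha) > 0$ at which the characteristic polynomial of Theorem~\ref{thm:Poly} acquires a non-trivial root on the unit circle. Theorem~\ref{thm:stabc} guarantees stability for $c$ large (equivalently $\epsilon$ small), so the first such $\epsilon$ identifies the bifurcation curve. I would treat $\Delta_{2k+1}$ (respectively $\Delta_{2k+2}$) as a quadratic in $z = \lambda^k$ with coefficients in $\lambda$. The discriminant is a perfect square in $\sqrt{\lambda}$, giving explicit roots which simplify to the two factored conditions
\begin{equation*}
\lambda^{k+\tfrac12} + \lambda^k = 2\epsilon \quad \text{or} \quad \lambda^{k+\tfrac12} - \lambda^k = -2\epsilon \qquad (\theta\in(0,\tfrac12)),
\end{equation*}
with analogous conditions $\lambda^{k+1}\pm\lambda^{k+\tfrac12} = -2\epsilon$ in the case $\theta\in(\tfrac12,1)$.

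Next I would parametrise $\lambda = e^{i\omega}$ with $\omega = 2\pi\rho \in (0,\pi)$ and convert each condition to trigonometric form. For instance, $\lambda^{k+\tfrac12} - \lambda^k = -2\epsilon$ becomes $2\sin(\omega/4) e^{i[(k+\tfrac14)\omega - \pi/2]} = 2\epsilon$, which requires $(k+\tfrac14)\omega \equiv \pi/2 \pmod{2\pi}$ together with $\sin(\omega/4)>0$. The smallest admissible $\omega$ is $\omega = 2\pi/(4k+1)$, yielding $\rho = 1/(4k+1)$ and $\epsilon = \sin(\pi/(2(4k+1)))$. Substituting $h'(\alpha) = 1/\epsilon = \csc(\pi/(2(4k+1)))$ into the closed-form \eqref{eq:hdashalpcu} recovers the first case of \eqref{eq:T}. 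The same strategy applied to the $\Delta_{2k+2}$ factorisation delivers $\rho = 1/(4k+3)$ and $\epsilon = \sin(\pi/(2(4k+3)))$, and combining with $h'(\alpha) = -1 + \sqrt{c^2 - v^2(\theta)}$ produces the second case of \eqref{eq:T}.

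For the saddle-node branch I would simply evaluate the characteristic polynomials at $\lambda = 1$: one finds $\Delta_{2k+1}(1) = 4\epsilon(1-\epsilon)$, giving a non-trivial unit root exactly when $\epsilon = 1$, i.e.\ $h'(\alpha) = 1$, which by \eqref{eq:hdashalpcu} means $c^2 = u^2(\theta)$, establishing \eqref{eq:SN}. For $\theta\in(\tfrac12,1)$, however, $\Delta_{2k+2}(1) = 4\epsilon(1+\epsilon) > 0$, ruling out a saddle-node in that range. The restriction $\tau\in[\tfrac12 - \hat\theta,\hat\theta]$ is then explained by invoking Theorem~\ref{thm:po}: only inside this $\theta$-interval does the orbit continue to exist all the way down to the line $c^2 = u^2(\theta)$; for $\theta$ outside this interval the orbit is destroyed earlier by the mechanism of \eqref{eq:boundaryIc(theta)}, so $c^2 = u^2(\theta)$ is never attained. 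A quick comparison of magnitudes — $\sin(\pi/(2(4k+1)))<1$ for $k\geq 1$ and $\sin(\pi/(2(4k+3)))<1$ for all $k\geq 0$ — confirms that the torus branch is the first to destabilise whenever $\tau > \tfrac12$ and that the disjoint parameter ranges for the two bifurcation types agree with the statement.

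The main obstacle is verifying that the chosen $\epsilon$ is actually the smallest one producing a unit-circle root, since each of the two factored equations admits several families of solutions parametrised by integers, and one must also eliminate period-doubling ($\omega = \pi$) and the residual branches (e.g.\ $\lambda^{k+\tfrac12} + \lambda^k = 2\epsilon$) as giving strictly larger $\epsilon$. I would dispose of this by checking the trigonometric magnitudes $|\cos(\omega/4)|$ versus $|\sin(\omega/4)|$ at the smallest admissible $\omega$ of each branch and showing that the $\sin$-branch always dominates; ruling out $\omega = \pi$ is immediate because $(k+\tfrac14)\pi$ and $(k+\tfrac34)\pi$ are never half-integer multiples of $\pi$. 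This enumeration, though tedious, is what turns the candidate formulae \eqref{eq:T}--\eqref{eq:SN} into genuine stability boundaries.
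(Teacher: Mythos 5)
Your proposal is correct and reaches all of \eqref{eq:T}, \eqref{eq:w} and \eqref{eq:SN}, but the central computation goes by a genuinely different route than the paper's. The paper substitutes $\lambda=e^{i\omega}$ directly into $\Delta_{2k+1}$ and $\Delta_{2k+2}$, separates real and imaginary parts as in \eqref{eq:realpartzero}--\eqref{eq:imagpartzero}, eliminates $h'(\alpha)$, and then grinds through trigonometric identities to reduce the resulting expression to $\sin((4k+1)\omega/2)=0$ (resp.\ $\sin((4k+3)\omega/2)=0$). You instead treat $\Delta_{2k+1}$ as a quadratic in $z=\lambda^k$, observe that its discriminant $16\epsilon^2\lambda$ is a perfect square in $\sqrt{\lambda}$, and factor the unit-circle condition into $\lambda^{k+\frac12}\pm\lambda^k=\pm2\epsilon$; the phase congruence $(k+\tfrac14)\omega\equiv\tfrac{\pi}{2}$ or $\equiv 0\pmod{2\pi}$ then reproduces the paper's admissible frequencies $\omega=2j\pi/(4k+1)$ split into the families $j=4m+1$ (your sine branch, $\epsilon=\sin(\omega/4)$) and $j=4m$ (cosine branch, $\epsilon=\cos(\omega/4)$), which is exactly the paper's set of $j$ with $h'(\alpha)>0$. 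Your magnitude comparison ($\cos(\omega/4)>1/\sqrt2>\sin(\pi/(2(4k+1)))$ for $\omega\in(0,\pi)$) is the same content as the paper's inequality \eqref{eq:cscsec} and correctly singles out $j=1$ as the first crossing; your treatment of $\lambda=\pm1$ and of the restriction of \textbf{SN} to $\tau\in[\tfrac12-\hat\theta,\hat\theta]$ also matches the paper. The factorisation buys a shorter and more transparent derivation of the frequency quantisation; the paper's elimination argument avoids introducing $\sqrt{\lambda}$ and the attendant branch bookkeeping.

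One step you omit that the paper does carry out: having located the candidate torus curve, you must check that it actually lies inside the existence region $\mathcal{R}$ of Theorem~\ref{thm:po} (the paper verifies $c^2\geq 4>1+\pi^2/4$ along \textbf{T} for $k\geq1$), since otherwise the orbit could be destroyed by a border collision before the Floquet pair reaches the unit circle and the curve would not be a stability boundary at all. This is a short but necessary verification, not a formality, given that exactly this mechanism (loss of existence before loss of linear stability) governs the \textbf{BC} portion of the boundary for $\tau<\tfrac12$.
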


The proof of Theorem~\ref{thm:w} is quite technical, and involves evaluating the real and imaginary parts of the polynomials defined in
\eqref{eq:polynomiallambda2kp1} 
and
\eqref{eq:polynomiallambda2kp2}
with $\lambda=e^{i\omega}$,
combined with some trigonometric identities to figure out where the bifurcation curves are in parameter space. It
is provided in the Appendix~\ref{sec:pf:w}.

\begin{figure}
    \centering
    \includegraphics[width=\textwidth]{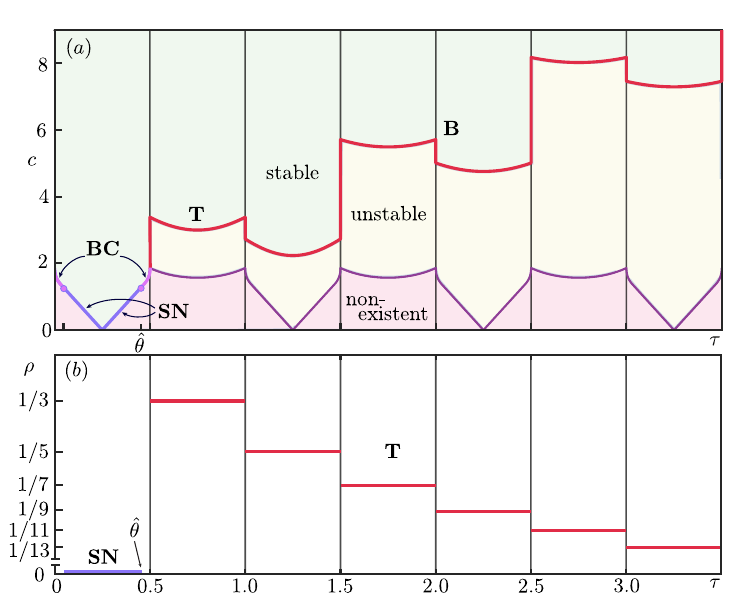}
    \caption{(a) Stability boundary $\mathbf{B}$ for symmetric 
    1:1-periodic orbits with phase $\alpha \in [-\frac{1}{4}, \frac{1}{4}]$. For $\tau\geq\tfrac12$ the red curve $\mathbf{T}$ separates the stability (green) and instability (yellow) regions in the $(\tau, c)$-plane.    
    Purple curves mark where symmetric 1:1 orbits cease to exist; no such orbits exist in the purple region. 
    For $\tau\in(\tfrac12-\hat\theta,\hat\theta)$ there is a saddle-node bifurcation of 1:1-periodic orbits along the part of the curve $c^2=u^2(\theta)$ labelled $\mathbf{SN}$. At the pink curves labelled $\mathbf{BC}$ the 1:1-periodic orbits cease to exist, 
    as already illustrated in Figure~\ref{fig:RegEx}(e).
    (b) Rotation number $\rho$, parameterized by $\tau$, with a logarithmic $y$-axis. 
    }
    
    \label{fig:T}
\end{figure}

Theorem~\ref{thm:w} provides an almost complete explanation for the loss of stability of the symmetric 1:1-periodic orbits.
The bifurcation curves derived in the theorem define a stability boundary, denoted as 
$\mathbf{B}$, in the $(\tau,c)$-plane,
which is depicted in Figure~\ref{fig:T}(a).
Above the stability curve  $\textbf{B}$, symmetric 1:1-periodic orbits are stable, and periodic (seasonal) forcing is dominant. The purple curves mark the 
boundary of the region where these periodic orbits exist;
below this curve, there are no symmetric 1:1-periodic orbits. 
The stability boundary \( \textbf{B} \) 
is composed of three main parts:
the torus bifurcation curve \( \textbf{T} \) for $\tau>\tfrac12$
and the saddle-node bifurcation curve \( \textbf{SN} \)
for $\tau\in(\tfrac12-\hat\theta,\hat\theta)$, 
as described in Theorem~\ref{thm:w}, along with a border collision  \( \textbf{BC} \) 
for $\tau\in(0,\tfrac12-\hat\theta)\cup(\hat\theta,\tfrac12)$, which is discussed below. 

We first describe the torus bifurcation curve \( \textbf{T} \).
From  Figure~\ref{fig:T}(a) and Theorem~\ref{thm:w} we see that  \( \textbf{T} \) is smooth on the open intervals $\tau\in(k,k+\tfrac12)$ 
and
$\tau\in(k+\tfrac12,k+1)$ 
for $k\in\N$. Figure~\ref{fig:T}(b) shows that the rotation number $\rho$ remains constant and rational over each of these intervals  
%$\tau\in(\tfrac{k}{2},\tfrac{k}{2}+\tfrac12)$
but decreases as $\tau$ increases, creating a descending staircase-like pattern. 

That the rotation number is a piecewise constant function of $\tau$ follows from \eqref{eq:elimzero} and \eqref{eq:elimone}
in the proof of Theorem~\ref{thm:w}.
The only variables in these equations
are $k$ and $\omega$; an immediate consequence of this
is that for
$\tau=k+\theta$ 
the value of any Floquet multiplier $\lambda=e^{i\omega}$ 
depends on $\tau$ only
through the value of its integer part $k$,
and whether $\theta\in(0,\tfrac12)$ or $\theta\in(\tfrac12,1)$.
Once a value of $\omega$ is found, the value of $h'(\alpha)$ is defined through \eqref{eq:hdashzero} or \eqref{eq:hdashone}.
Thus, $\omega$ being a piecewise constant function of $\tau$ implies that $h'(\alpha)$ is also constant over each subinterval 
 $\tau\in(k,k+\tfrac12)$ 
or
$\tau\in(k+\tfrac12,k+1)$.
Since, due to equations \eqref{eq:alpha0} and \eqref{eq:alpha1},
the phase $\alpha$ and hence also the value of $h'(\alpha)$ are non-constant functions of $c$ and $\tau$, the curvature of  \( \textbf{T} \)  seen
in Figure~\ref{fig:T}(a) is required 
to obtain a constant $h'(\alpha)$
and, hence, a constant rotation number over each subinterval 
 $\tau\in(k,k+\tfrac12)$ 
or
$\tau\in(k+\tfrac12,k+1)$; note that
the values of $c$ and $\theta$ only enter into the computation to ensure that $h'(\alpha)$ has the required value.

The stability of symmetric 1:1-periodic orbits is delicate when 
 \( \tau = j/2 \) for \( j \in \mathbb{N} \), because then $\alpha$ a nondegenerate zero, which implies that $\alpha-\tau$ is also a nondegenerate zero, and so $h'(t)$ is discontinuous at $t=\alpha$. The behaviour observed will then depend on one or both of $h'(\alpha^\pm)$ depending on how $\widetilde{h}$ is perturbed from $h$. We will not analyse this case in detail, but note that, since our previous arguments apply when \( \tau = j/2\pm\epsilon \), the vertical segments of \( \textbf{B} \) for 
  \( \tau = j/2 \) shown in Figure~\ref{fig:T}(a) 
 are certainly part of the stability boundary.

While the rotation number of the torus curve  \( \textbf{T} \)  in Figure~\ref{fig:T}(b) forms a descending staircase, the pattern of the
segments of \( \textbf{T} \) in Figure~\ref{fig:T}(a) is altogether more interesting.
On the torus curve  \( \textbf{T} \), the value of $c$ is generally increasing with the delay $\tau$,
which accords with the concept of delay-induced instability,  
but it does so in a two steps forward, one step backward pattern. This manifests as a large increase in $c$ as increasing $\tau$ crosses a half integer
$\tau=n+\tfrac12$, $n\in\N$, and a small decrease when $\tau$ crosses an integer value. 
While it may be surprising to see a periodic orbit become more stable by increasing the delay,
Theorem~\ref{thm:Poly} suggests an explanation for this behaviour.
For $\tau=k+\theta$ with $k\in\N$  the nontrivial Floquet multipliers are defined by the polynomial $\Delta_{2k+1}$ of odd degree for $\theta\in(0,\tfrac12)$, while for $\theta\in(\tfrac12,1)$ they are defined by the polynomial $\Delta_{2k+2}$ of even degree. It is well-known from recurrence relations (see for example the first Dahlquist barrier for zero-stable linear multistep methods \cite{HNWI}) that even-degree polynomials tend to be more stable than odd-degree polynomials, and we suspect that to play a role here. 

For $\tau<\tfrac12$,
the situation, as illustrated in Figure~\ref{fig:T}, is more complicated. 
For $\tau \in[\frac{1}{2} - \hat{\theta}, \hat{\theta}]$, 
the stability boundary for symmetric 1:1-periodic orbits with $\alpha \in[-\frac{1}{4},\frac{1}{4}]$ is the curve 
\textbf{SN},  or equivalently $c^2=u^2(\tau)$, 
which by Theorem~\ref{thm:po} is also the boundary of the parameter domain on which they exist.
On the curve 
\textbf{SN}, 
there is 
fold bifurcation of periodic orbits between the stable 1:1-periodic orbit
with phase $\alpha\in[-\tfrac14,\tfrac14]$ and the unstable 1:1-periodic orbit
(established in Theorem~\ref{thm:po2} and shown to be unstable in Theorem~\ref{thm:unstab11}). This fold bifurcation \textbf{SN} was already illustrated in Figure~\ref{fig:RegEx2} and discussed after Theorem~\ref{thm:po2}.

The \textbf{SN} and \textbf{T} curves define all of the stability boundary except for the intervals  \( \tau \in [0, \tfrac{1}{2} - \hat{\theta}) \) and \( \tau \in (\hat{\theta}, \tfrac{1}{2}] \),
labelled \textbf{BC} in Figure~\ref{fig:T}(a).
For $\tau$ in this interval as $c$ is decreased, as we already saw in Figure~\ref{fig:RegEx}(e) in Section~\ref{sec:existsym11pos14}, 
on the curve $c^2=w_-^2(\tau)$ where $w_-$ is defined by \eqref{eq:boundaryIc(theta)}, the symmetric 1:1 periodic solution gains two additional degenerate zeros.
Below this curve, in the region labelled I in Figure~\ref{fig:RegEx}(a),
symmetric 1:1-periodic orbits no longer exist. 

%\Sam{Proposed LOCATION:
In discontinuous delayed systems, such as the psGZT model \eqref{eq:iGZT}, the additional complexities that arise due to the presence of discontinuities can lead to border-collision bifurcations, which fall outside the scope of standard Floquet theory. In such systems, the Floquet multipliers may have discontinuities and do not necessarily move continuously around the unit circle \cite{banerjee1999border,LEINE2002259,RKA20}. 

It is thus not clear what type of bifurcation occurs on the curve \textbf{BC} for $\tau<\tfrac12$, or even if there is a bifurcation at all. The non-trivial Floquet multipliers given by \eqref{eq:polynomiallambda2kp1} all remain inside the unit circle above the straight lines defined by 
%\( c^2 = u^2(\theta) \),
\( c^2 = u^2(\tau)\), 
but Proposition~\ref{prop:cEformulation} does not apply 
for $u^2(\tau)\leq c^2\leq w_-^2(\tau)$, 
because of the non-existence of the 1:1-periodic orbits for 
%$c^2< w_-^2(\theta)$, 
$c^2< w_-^2(\tau)$, 
and the degenerate zeros of the solution for 
$c^2= w_-^2({\tau})$. 
For a smooth dynamical system, a periodic orbit acquiring additional zeros in its profile does not, in general, change its stability and therefore, this is not normally considered to be a bifurcation.
However, for the non-smooth system \eqref{eq:iGZT}, the transition along $c^2 = w_-^2(\tau)$, where additional degenerate zeros are created, may be defined as a border-collision bifurcation \cite{banerjee1999border}, potentially creating a 1:3 orbit. Our constructions and analysis of periodic orbits and their stability from Section~\ref{section:sym11} onwards all require that the periodic orbit have only one upward zero per period, and so they cannot be directly applied to study 1:3 periodic orbits. 
Theorem~\ref{thm:ivpsol} can be used to numerically explore the existence of 
1:3 periodic orbits, but we were not able to identify initial conditions in region I that lead to them. Moreover, simulations suggest the existence of seemingly non-periodic or extremely high-period orbits for $c^2<w^2_-(\tau)$.

\begin{figure}
    \centering
    \includegraphics[width=\textwidth]{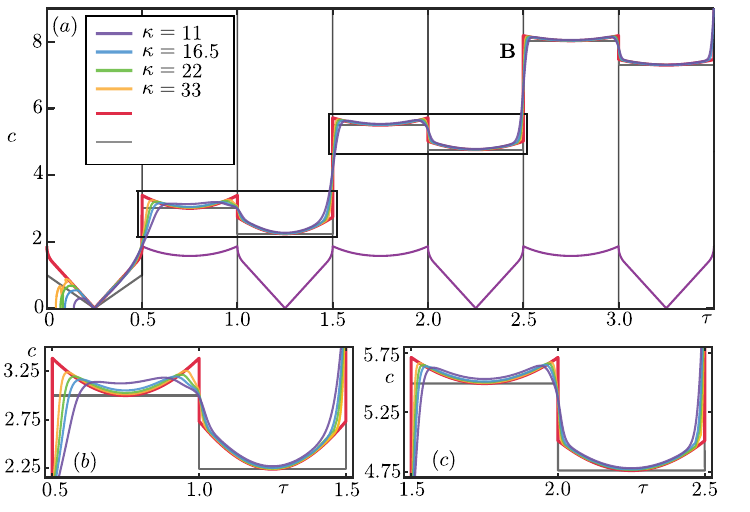}
    \put(-300,195){\small{\eqref{eq:iGZT}}}
    \put(-300,183){\small{\eqref{eq:RKA}}}
    \caption{(a) Comparison in the $(\tau,c)$-plane of the stability boundary $\mathbf{B}$ for 1:1-periodic solutions of the psGZT model \eqref{eq:iGZT} in red, the bifurcation curves of period-one orbits in the smooth GZT model \eqref{eq:GZT} shown for increasing coupling strengths $\kappa = \{11, 16.5, 22, 33\}$, and the bifurcation curve in gray of the signum forced psGZT model \eqref{eq:RKA}. (b) and (c) show enlargements of regions of the main figure. }
    
    %\Sam{I changed the color and added the RKA curve. Is the yellow still to light?}}
    \label{fig:Tddebiftool}
\end{figure}

\subsection{Comparison of Stability Domains for Different Variants of the GZT Model}
\label{sec:compare}

In this section, we show that, 
for increasing values of the slope $\kappa$,
the psGZT model \eqref{eq:iGZT} provides a significantly more accurate limiting description of the dynamics of the smooth GZT model~\eqref{eq:GZT} 
than the signum-forced system~\eqref{eq:RKA}
does.
Figure~\ref{fig:Tddebiftool} provides a global comparison in the $(\tau,c)$-plane between the stability boundary~\textbf{B} of the psGZT model~\eqref{eq:iGZT}, the corresponding bifurcation curves of the signum-forced system~\eqref{eq:RKA}, and those of the smooth GZT model~\eqref{eq:GZT} for increasing values of $\kappa \in\{ 11,16.5,22,33\}$.

Figure~\ref{fig:Tddebiftool} shows that 
the stability boundary of the psGZT model
agrees closely with the
bifurcation curves of the smooth GZT model~\eqref{eq:GZT}  
for $\tau>\tfrac14$; here we used \texttt{DDE-BifTool} \cite{sieber2014dde} within \texttt{Matlab} \cite{MATLAB2024b} to compute those bifurcation curves.
The torus bifurcation curve of the GZT model follows the convex segments of the stability boundary~$\mathbf{B}$ of the psGZT system well over intervals of~$\tau$ that do not contain half-integer values, and the agreement improves with growing~$\kappa$.  
Small deviations persist only near the endpoints of these intervals, where $\mathbf{B}$ is vertical.
As highlighted in Figures~\ref{fig:Tddebiftool}(b--c), this agreement becomes even more pronounced for larger delays~$\tau$ and, correspondingly, for larger forcing amplitudes~$c$.  
In these regimes, trajectories cross the zero-line with steeper slopes, which enhances the accuracy of the limiting approximation~\eqref{eq:limit}.

We already explained the convex shape of the segments of $\mathbf{T}$ between half integers for the psGZT model in Section~\ref{sec:stab11igzt} when discussing Figure~\ref{fig:T}. 
Now, the very close alignment of the bifurcation curves of the smooth GZT model \eqref{eq:GZT} and the psGZT model \eqref{eq:iGZT} on these segments suggests that the torus bifurcation curve for the original smooth GZT model arises through the same mechanism. 
In particular, the close agreement between the stability boundaries of both models demonstrates that the psGZT system serves as a powerful analytical proxy for the original GZT model.

The bifurcation curve of the signum-forced system~\eqref{eq:RKA},
in contrast to the GZT and psGZT models, consists of flat segments
with a discontinuities when $\tau$ is a half-integer. 
The stability curves of the 
signum-forced GZT model~\eqref{eq:RKA}
and the psGZT model \eqref{eq:iGZT}
coincide exactly at $\tau = k + \tfrac{1}{4}$ and $\tau = k + \tfrac{3}{4}$ for $k \in \mathbb{N}$.
This contrast arises because, in the signum-forced system~\eqref{eq:RKA}, the smooth forcing term $\cos(2\pi t)$ in~\eqref{eq:iGZT} is replaced by its sign, $\sign(\cos(2\pi t))$.  
Consequently, the phase of the solution remains constant over each open interval $\tau \in (k, k+\tfrac{1}{2})$ and $\tau \in (k+\tfrac{1}{2}, k+1)$ for $k \in \mathbb{N}_0$, producing the characteristic flat segments of the torus curve for the signum-forced GZT model~\eqref{eq:RKA} in Figure~\ref{fig:Tddebiftool}.  
In particular, in the special cases where $u(\theta)=0$ and $v(\theta)=0$, occurring when $\tau=k+\tfrac{1}{4}$ and $\tau=k+\tfrac{3}{4}$, equation~\eqref{eq:T} of the psGZT model \eqref{eq:iGZT} reduces to equation~(19) of~\cite{RKA20} for the signum forced model \eqref{eq:RKA}.  
Hence, when the forcing is replaced by its sign, the smooth dependence of the stability on the phase terms $u(\theta)$ and $v(\theta)$ is lost; the stability properties of the 1:1 periodic orbits then reduce to those of the piecewise-constant (signum-forced) system.

\begin{figure}
    \centering
    \includegraphics[width=\textwidth]{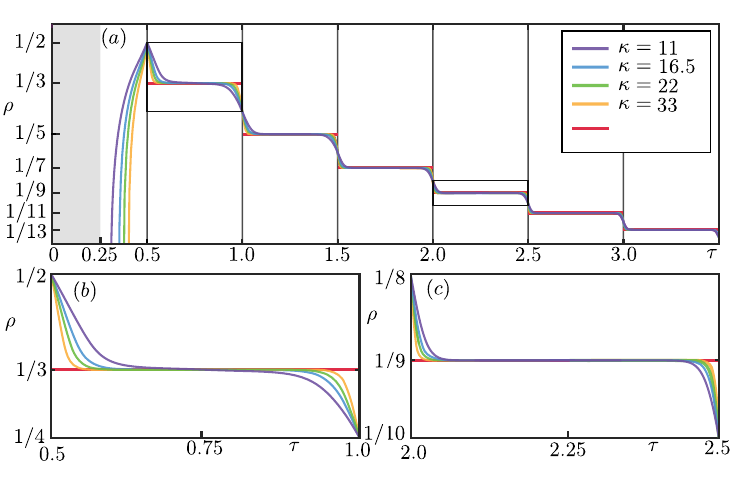}
    \put(-66,171){\small{\eqref{eq:iGZT}}}
    \caption{(a) Comparison for $\tau>\tfrac14$ between
    the rotation number $\rho$ 
    on the stability boundary $\mathbf{B}$ and {on} the bifurcation curves of period-one orbits in the smooth GZT model \eqref{eq:GZT}  shown for increasing coupling strengths $\kappa \in \{11, 16.5, 22, 33\}$. 
    (b) and (c) show enlargements of parts of the main figure (a). }
    \label{fig:Tddebiftoolrho}
\end{figure}

Figure~\ref{fig:Tddebiftoolrho} compares the rotation number~$\rho$ along the stability boundary curve~\textbf{B} for the psGZT model~\eqref{eq:iGZT} with the numerically computed rotation number along the torus bifurcation curve of the smooth GZT model~\eqref{eq:GZT}, as computed with \texttt{DDE-BifTool}.  
The two curves show excellent agreement over the displayed range of panel (a), often being indistinguishable in the figure.  
Both exhibit a staircase-like descent, with each step corresponding to an interval where~$\rho$ remains nearly constant.  
This correspondence becomes increasingly close as either~$\tau$ or~$\kappa$ grows, as illustrated by the enlargement panels~(b) and (c). Overall, Figure~\ref{fig:Tddebiftool}
confirms that the psGZT model also captures the limiting dynamics of the smooth system in its rotation number.

\begin{figure}
    \centering
    \includegraphics[width=\textwidth]{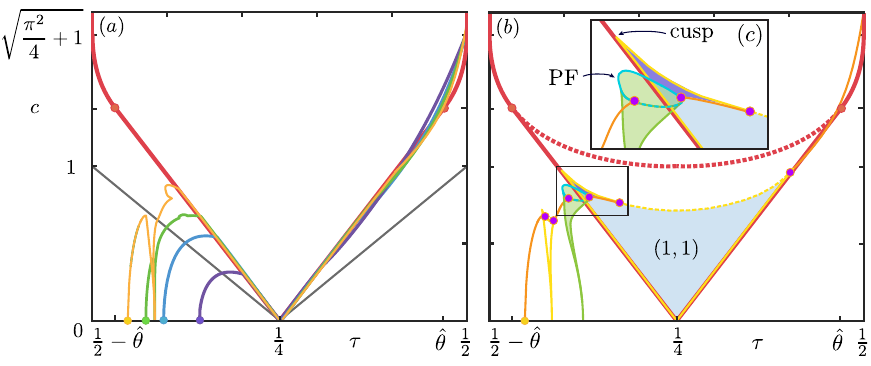}
    \caption{
     (a) Stability boundary \textup{\textbf{B}} for 1:1-periodic solutions of the psGZT model \eqref{eq:iGZT} in the \((\tau,c)\)-plane (red), compared with bifurcation curves of period-one orbits in the smooth GZT model  \eqref{eq:GZT} for increasing \(\kappa = \{11, 16.5, 22, 33\}\) (colors match Figure~\ref{fig:Tddebiftool}), restricted to \(\tau < \tfrac{1}{2}\). 
     The light grey curves denote 
     the stability boundary from \cite{RKA20} for the signum forced psGZT model \eqref{eq:RKA}.
     (b) Comparison between the bifurcations of the psGZT model \eqref{eq:iGZT} (red curves) and 
     the full bifurcation set of the smooth GZT model \eqref{eq:GZT} for \(\kappa=33\), with enlargement (c). Folds of periodic orbits (yellow),  torus bifurcations (orange), pitchfork of periodic orbits (light blue), folds of symmetrically related periodic orbits (green) are shown, along with the (1,1) resonance tongue (light grey region).
     Dashed curves indicate bifurcations involving saddle-type orbits; solid curves if one orbit is stable. 
     Purple dots indicate $(1,1)$ resonance points; codimension-2 bifurcations in which a pair of conjugate Floquet multipliers collide at $+1$.
     The long dashed yellow curve at the top of the (1,1) resonance tongue,
      between two $(1,1)$ resonance points of the GZT model, corresponds to
      the red dashed curve of the psGZT model 
     defined by $c^2=w_+^2(\theta)$ which denotes the upper bound on the region where 1:1-periodic orbits of the psGZT model coexist.
     }
    \label{fig:RT11}
\end{figure}

As discussed after Theorem~\ref{thm:w}, the curve~\textbf{B} for the psGZT model represents a degenerate case where $\rho$ is constant on the open intervals $\theta \in (0, \tfrac{1}{2})$ and $\theta \in (\tfrac{1}{2},1)$.  
In contrast, $\rho$ varies continuously with~$\tau$ in the smooth GZT model, taking the values $\tfrac{1}{4k+1}$ and $\tfrac{1}{4k+3}$ at $\tau = k + \tfrac{1}{4}$ and $\tau = k + \tfrac{3}{4}$, respectively.  
As~$\tau$ increases, $\rho(\tau)$ flattens near these points but transitions sharply near half-integer values of $\tau$, where the psGZT curve~\textbf{B} is discontinuous.  
In particular, for the smooth GZT model, $\rho = \tfrac{1}{2k}$ whenever $\tau = \tfrac{k}{2}$, illustrating how the continuous dependence of~$\rho$ on~$\tau$ smooths out the staircase pattern observed in the piecewise-smooth limit.

We now turn to the comparison of the three models for small delays $\tau < \tfrac{1}{2}$, where the dynamics are more complicated.  
Figure~\ref{fig:RT11}(a) shows an enlargement of the $(\tau,c)$-plane from Figure~\ref{fig:Tddebiftool}(a), revealing that the three models differ significantly in this $\tau$-range.  
The stability boundaries of the psGZT model~\eqref{eq:iGZT} and the signum-forced model~\eqref{eq:RKA} contrast sharply: for the latter, the gray curve representing its stability boundary has a smaller slope than that of the psGZT model.  
As shown in~\cite{RKA20}, this gray curve corresponds to a fold bifurcation of the 1:1-periodic orbit in the signum-forced GZT system~\eqref{eq:RKA}.  
By contrast, as discussed at the end of Section~\ref{sec:stab11igzt}, for the psGZT model the stability boundary is determined by a fold occurring for $\tau\in(\tfrac12-\hat{\theta},\hat{\theta})$, and by the appearance of new zeros along the curve $c^2=w_-^2(\tau)$ for other values of $\tau$.

For the smooth GZT model~\eqref{eq:GZT}, the bifurcation curves that define the stability boundary approach those of the psGZT model as $\kappa$ increases.
However, we observe that the smooth model \eqref{eq:GZT} exhibits considerably more structure than the psGZT model~\eqref{eq:iGZT} when $\tau<\tfrac14$.

To explain this intricate structure, Figure~\ref{fig:RT11}(b) displays the complete bifurcation diagram for $\kappa = 33$ and contrasts it with the corresponding bifurcation set of the psGZT model.  
Following the torus bifurcation curve from $\tau=\tfrac12$ towards smaller $\tau$-values, we find that near $(\tau,c)\approx(0.4,1)$ the curve reaches a $(1,1)$ resonance point.  
From this point, two fold bifurcation curves emerge: one between two unstable periodic orbits (the dashed curve bounding the top of the light gray region labeled $(1,1)$), and another between a stable and an unstable orbit (the solid curve forming the lower boundary of the same region).  
The latter curve traces the stability boundary~\textbf{B}; it touches the zero-forcing line at $(\tau,c)=(\tfrac14,0)$, and then bends upward along~\textbf{B}, reaching the cusp point marked in Figure~\ref{fig:RT11}(c).  
This fold curve ultimately reconnects with the second fold of unstable periodic orbits at another $(1,1)$ resonance point (the rightmost one in Figure~\ref{fig:RT11}(c)), from which a new torus bifurcation curve emerges.  
The fold and torus bifurcation curves thus enclose a region—shown in dark blue in the inset of Figure~\ref{fig:RT11}(c)— with bistability between two period-one orbits.  
This phenomenon contrasts sharply with the psGZT model~\eqref{eq:iGZT}, for which bistability of 1:1-periodic orbits is ruled out by Theorem~\ref{thm:unstab11}.

Returning to Figure~\ref{fig:RT11}(b), 
the two fold curves form a $(1,1)$ resonance tongue, depicted in light gray.  
Within this region, trajectories 
of the GZT model \eqref{eq:GZT} converge to periodic orbits which 
are $(1,1)$-locked: these periodic orbits lie on an invariant torus and complete one revolution in each torus direction before returning to their initial position.  
Importantly, these $(1,1)$-locked orbits are not necessarily 1:1-periodic.  
The upper boundary of this resonance tongue  
lies just below
the curve $c^2=w_+^2(\tau)$ (and its reflection about $\tau=\tfrac14$), which  defines the upper boundary of the set~$\mathcal{S}$,
for the psGZT model \eqref{eq:iGZT} from Section~\ref{section:existencesymmetric1434}.
Inside~$\mathcal{S}$, 1:1-periodic orbits with phase $\alpha\in[\tfrac14,\tfrac34]\ (\mathrm{mod}\ 1)$ and $\alpha\in[-\tfrac14,\tfrac14]\ (\mathrm{mod}\ 1)$ coexist in the psGZT model.
In the smooth GZT model~\eqref{eq:GZT}, the upper boundary of the $(1,1)$ resonance tongue corresponds to a fold of 1:1-periodic orbits.  
At the upper edge of~$\mathcal{S}$, the corresponding orbits of the psGZT model develop degenerate zeros.
The nature of this transition is not yet clear but, as discussed at the end of Section~\ref{sec:stab11igzt} for the curve $c^2=w_-^2(\tau)$, 
a border-collision bifurcation may also occur along the curve $c^2=w_+^2(\tau)$.
A companion paper~\cite{GZTn} will explore these resonance regions associated with higher-period orbits of the psGZT model~\eqref{eq:iGZT}, focusing on transitions between periodic orbits as the number of zeros changes.

Finally, the torus bifurcation curve emerging to the left of the main resonance tongue in Figure~\ref{fig:RT11}(b) follows a sequence of $(1,1)$ resonance points, from which two additional resonance tongues arise.  
The rightmost of these, shaded green, contains symmetrically related $(1,1)$ periodic orbits—that is, pairs of periodic orbits connected by the symmetry~\eqref{eq:sym}.  
Such symmetry-related orbits do not exist in the psGZT model~\eqref{eq:iGZT}.  
As seen in Figure~\ref{fig:RT11}(a), these intricate bifurcation structures in the smooth GZT model for $\tau<\tfrac14$ shift progressively to smaller $\tau$ values as $\kappa$ increases.  
This behaviour is consistent with the fact that the torus bifurcation curves of the GZT model are anchored to the zero-forcing line $c=0$ at $\tau=\tfrac{\pi}{2\kappa}$ (see Section~2.2 of~\cite{SamMSc}).  
Since the psGZT model corresponds to the $\kappa\to\infty$ limit of the GZT system, this likely explains why such additional resonance structures are absent in the psGZT model~\eqref{eq:iGZT}.

These observations demonstrate that the psGZT model \eqref{eq:iGZT} provides a far better limiting description of the dynamics of the smooth GZT system \eqref{eq:GZT} than the signum-forced model~\eqref{eq:RKA} does.  
The close alignment in terms of both the bifurcation and rotation-number structure across a wide range of parameters shows that the piecewise-smooth approximation~\eqref{eq:iGZT} accurately represents the 
 geometry of the stability boundary of the smooth GZT model \eqref{eq:GZT}.

\section{Summary}

We introduced and studied the psGZT model \eqref{eq:iGZT}, which is a DDE with smooth periodic forcing and piecewise-constant time-delayed feedback. The psGZT model is %designed as 
a simplification of the smooth GZT model \eqref{eq:GZT}, which is 
a conceptual 
model for
ENSO.
The dynamics and bifurcations of our psGZT model \eqref{eq:iGZT}
can be analyzed mathematically
rather than relying solely on
numerical solutions, as is the case for the GZT model \eqref{eq:GZT}.
For the psGZT model, we constructed explicit solutions for initial value problems and derived closed-form expressions for period-one orbits.

By exploiting the properties and symmetries of the model, 
we determined the regions of parameter space where period-one orbits exist
as well as
the precise conditions under which they lose stability. We showed that stability
is characterized by a linear mapping that tracks the zeros of perturbations, yielding the Floquet multipliers of periodic solutions. 
This analysis revealed that stable period-one orbits mainly lose stability through a torus bifurcation, for which we obtained a closed-form expression. For small delays, however, stability is lost through a fold bifurcation, giving rise to coexisting period-one orbits.

We compared the bifurcation curves of the psGZT model with those of the signum-forced approximation \eqref{eq:RKA} and those of the smooth GZT model \eqref{eq:GZT}.
The psGZT model \eqref{eq:iGZT}, by incorporating smooth periodic forcing, is
the large $\kappa$ limit of
the GZT model \eqref{eq:GZT}.
The 
signum forced psGZT model \eqref{eq:RKA}, in contrast only admits a discrete set of velocities, and cannot be obtained by taking a parameter limit in the GZT model \eqref{eq:GZT}.
We
showed that the psGZT model reproduces the bifurcation structures of the GZT model \eqref{eq:GZT} much more faithfully
for sufficiently large $\kappa$. 
Namely, we found close agreement between the bifurcation curves of the psGZT and GZT models over wide parameter ranges, which improved as the coupling strength $\kappa$ and the delay increased. 
This demonstrates that the psGZT model provides a powerful analytical proxy for the GZT model, preserving the essential ENSO-like dynamics while remaining tractable to rigorous study.
Our explicit characterization of the bifurcation curves for the psGZT model \eqref{eq:iGZT} provides insight into the bifurcations of the smooth GZT model \eqref{eq:GZT},
providing a better understanding of the delay-driven oscillations relevant to ENSO dynamics, as we demonstrated by our comparison.

The psGZT model \eqref{eq:iGZT} reveals a rich landscape of dynamics, of which we only considered period-one orbits in this work. In a companion paper
\cite{GZTn}, we extend this analysis to higher odd-period orbits and study frequency-locking mechanisms emerging from the interaction of delayed feedback and seasonal forcing. 

For future work, it would be interesting to incorporate additional effects, such as asymmetry of the coupling function, positive feedback, and even state-dependent delays. Studies of the smooth GZT model suggest that such extensions lead to significantly more complex and chaotic dynamics, which are closer to ENSO observations \cite{KKD19,KKP16}.

\section*{Acknowledgments}
The authors would like to thank Bernd Krauskopf for introducing us to the GZT model, and for his generous feedback
throughout this project. We also grateful to Stefan Ruschel for his comments on an early draft of this paper.

%% BioMed_Central_Bib_Style_v1.01

\appendix
\section*{Appendices}
\renewcommand{\theequation}{\thesection.\arabic{equation}}
\renewcommand\thefigure{\thesection.\arabic{figure}} 

%%%%%%%%%%%%%%%%%%%%%%%%%%%%%%%%%%%%%%%%%%%%%%%%%%%%%%%%%%%%%%%%

\section{Proofs of main results}\label{sec-proofs}

\subsection{Proof of Theorem~\ref{thm:po}}
\label{sec:pf:po}
If $\alpha\in[-\frac{1}{4},\frac{1}{4}] \pmod 1$ then translating the solution by an integer number of periods we obtain $\alpha\in[-\frac{1}{4},\frac{1}{4}]$; hence without loss of generality we always assume that $\alpha$ is chosen so that 
 $\alpha\in[-\frac{1}{4},\frac{1}{4}]$ in this proof.

In Theorem~\ref{thm:po_closedform} we already derived the profiles of candidate 1:1-periodic orbits. The profiles in \eqref{eq:h11}-\eqref{eq:phalf1} were constructed so that $h(\alpha)=h(\alpha+\tfrac12)=0$, but this used  the assumption that \eqref{eq:hsigns} holds. Now, we need to determine for which parameter values 
the profiles defined in \eqref{eq:h11}-\eqref{eq:phalf1} satisfy \eqref{eq:hsigns}  to ensure that the constructed profiles are actually solutions of the psGZT model \eqref{eq:iGZT}. We will proceed by first showing that the profiles defined in 
\eqref{eq:h11}-\eqref{eq:phalf1} all satisfy the symmetry property
\eqref{eq:sym}. After this we will show that 
%$h(t)>0$ for almost all $t\in(\alpha,\alpha+\tfrac12)$. 
\be \label{eq:hleftsign}
h(t)>0, \textit{ a.e.~on } t\in(\alpha,\alpha+\tfrac12).
\ee
From \eqref{eq:hleftsign} and the symmetry \eqref{eq:sym} it then follows that 
$h(t)<0$ for almost all $t\in(\alpha+\frac12,\alpha+1)$, and the profile does indeed define a symmetric 1:1-periodic orbit.

First, we show that the profile $h(t)$ 
defined by \eqref{eq:h11}-\eqref{eq:phalf1}
satisfies \eqref{eq:sym} and hence is symmetric. It suffices to show that 
\be \label{eq:hsym}
h(t) + h(t + \tfrac{1}{2}) = 0, \quad\text{for all }t \in [\alpha, \alpha + 1).
\ee
From \eqref{eq:h11} we compute
$$
h(t) + h(t + \tfrac{1}{2}) = p(t) + p(t + \tfrac{1}{2}) - \frac{c}{\pi} \sin(2\pi \alpha).
$$
From \eqref{eq:p0}-\eqref{eq:phalf1}
it is easy verify that for all 
$t\in[\alpha,\alpha+1)$
$$
p(t) + p(t + \tfrac{1}{2}) =
\begin{cases}
2\theta - \tfrac{1}{2}, & \text{for } \theta \in [0, \tfrac{1}{2}), \\
\tfrac{3}{2} - 2\theta, & \text{for } \theta \in [\tfrac{1}{2}, 1).
\end{cases}
$$
Then the phase condition \eqref{eq:alpha0}
ensures that \eqref{eq:hsym} is satisfied for all $\theta\in[0,\tfrac12)$, while the 
phase condition \eqref{eq:alpha1}
ensures that \eqref{eq:hsym} is also satisfied for all $\theta\in[\tfrac12,1)$.
Thus, the profile is symmetric whenever such a value of $\alpha$ exists, which it does, if and only if,
the constraint \eqref{eq:necc} holds.
Therefore, in the remainder of the analysis, we assume $\theta$ and $c$ satisfy \eqref{eq:necc}.

We now can use the symmetry of $h(t)$ to confirm that it satisfies the system \eqref{eq:iGZTth}. Specifically, it suffices to check that there exists an $\alpha$ satisfying \eqref{eq:alpha0} or \eqref{eq:alpha1}, such that $h(\alpha) = 0$, and that \eqref{eq:hleftsign} is satisfied.
If these conditions hold, then by symmetry, $h(t)$ changes sign exactly at $t = \alpha$ and $t = \alpha + \tfrac{1}{2}$. Then, by Theorem~\ref{thm:po_closedform}, $h(t)$ satisfies \eqref{eq:iGZT} and defines a 1:1-periodic orbit. Because we assume \eqref{eq:necc}, a suitable phase $\alpha$ always exists, and the condition $h(\alpha) = 0$ is automatically satisfied. Thus, to complete the proof, it remains only to determine necessary and sufficient 
conditions on the parameters $\theta$ and $c$ such that \eqref{eq:hleftsign} holds.

The proof distinguishes seven cases, depending on the value of the parameter $\theta$, 
namely
$\theta = 0$, $\theta = \tfrac{1}{2}$, $\theta \in (\tfrac{1}{4}, \tfrac{1}{2})$, $\theta \in (0, \tfrac{1}{4})$, $\theta = \tfrac{1}{4}$, $\theta \in (\tfrac{1}{2}, \tfrac34]$, and $\theta \in (\tfrac{3}{4}, 1)$. 
Fortunately, due to the symmetry relation developed in Corollary~\ref{cor:sym2}, 
we only need to consider four of these cases.

The result for $\theta\in(0,\tfrac14)$ and for $\theta=\tfrac12$
follows from the results for $\theta\in(\tfrac14,\tfrac12)$ and for $\theta=0$, respectively, by Corollary~\ref{cor:sym2}(1),
while the result for $\theta \in [\tfrac{3}{4}, 1)$  follows from
$\theta \in (\tfrac{1}{2}, \tfrac34]$ by Corollary~\ref{cor:sym2}(2). We now consider the remaining four cases.

\textbf{Case \boldmath$\theta=0$:} 
We show that $h(t)>0$ for all $t\in(\alpha,\alpha+\tfrac{1}{2})$. 

The orbit $h(t)$ has discontinuous derivative at $t=\alpha$ and $t=\alpha+\tfrac{1}{2}$, so we consider a one-sided derivative, $h'(\alpha^+)$, where $\alpha^+$ defines the right hand limit as $t$ approaches $\alpha$ from above. 
For $t\in(\alpha,\alpha+\tfrac12)$,
from \eqref{eq:h11} and \eqref{eq:p0}, it follows that
$h'(t)=-1+c\cos(2\pi t)$. 
Then
$$
h'(\alpha^+)=
\lim_{t\to\alpha^+}h'(t)=
-1+c\cos(2\pi\alpha).$$
But $h'(\alpha^+)\geq0$ is a necessary condition for \eqref{eq:hleftsign} to hold, and 
$h'(\alpha^+)\geq0$
is equivalent to
\be \label{eq:th0cgt1}
c\cos(2\pi\alpha)\geq1.
\ee
But, by \eqref{eq:alpha0}, for $\theta=0$ we have
$$
c^2=c^2\cos(2\pi\alpha)+c^2\sin(2\pi\alpha)=c^2\cos(2\pi\alpha) + {u}^2(0)
=c^2\cos(2\pi\alpha)+\frac{\pi}{4}^2.$$
Therefore, in the case $\theta=0$, equation \eqref{eq:c2pi2}
is equivalent to $h'(\alpha^+)\geq0$.

To satisfy \eqref{eq:alpha0} with $\theta=0$ we require $\sin(2\pi\alpha)<0$ and, along with \eqref{eq:th0cgt1},
this implies that
$\alpha\in(-\tfrac14,0)$, 
for which \eqref{eq:alpha0} then defines a unique value of $\alpha$. 
But then, for all $t\in(\alpha,-\alpha)$,
we have 
$c\cos(2\pi t)>c\cos(2\pi\alpha)\geq1$,
which implies that $h'(t)>0$ for all 
$t\in(\alpha,-\alpha)$.

Because of the properties of the 
cosine function there exists a
$\beta$ with $0<-\alpha<\beta<\tfrac12$ such that $c\cos(2\pi\beta)=1$
and $c\cos(2\pi t)<1$ for $t\in(\beta,1-\beta)$. It follows that $h'(t)>0$ for $t\in(\alpha,\beta)$ and $h'(t)<0$ for $t\in(\beta,1-\beta)$.
But $1-\beta>1/2>\alpha+1/2$, so $h(t)$ is a unimodal function on the interval $(\alpha,\alpha+\tfrac12)$, with $h(\alpha)=h(\alpha+\tfrac12)=0$, and the result for this case follows.

\textbf{Case \boldmath$\theta\in(\frac{1}{4},\tfrac12)$:}  
The solution $h(t)$ defined by \eqref{eq:h11} and \eqref{eq:p0half}
has discontinuous derivative at
$t=\alpha+\theta$.

First, we establish that $h(t)>0$ for all $t\in[\alpha+\theta,\alpha+\tfrac{1}{2})$. 
We observe that for $\theta\in(\tfrac14,\tfrac12)$
the right-hand side of the phase condition \eqref{eq:alpha0}
is positive, and it follows that $\alpha\in(0,\tfrac{1}{4}]$.
Then, because $t\in(\alpha+\theta,\alpha+\tfrac{1}{2})\subset(\tfrac{1}{4},\tfrac{3}{4})$ and $\cos{(2\pi t)}\leq 0$ for all $t\in(\tfrac{1}{4},\tfrac{3}{4})$, we obtain
$h'(t)=-1+c\cos(2\pi t)\leq-1<0$ for all
 $t\in(\alpha+\theta,\alpha+\tfrac{1}{2})$. Together with
$h(\alpha+\tfrac{1}{2})=0$, this establishes that
$h(t)>0$ for all $t\in[\alpha+\theta,\alpha+\tfrac{1}{2})$.

It remains to consider the behaviour of $h(t)$ for 
$t\in(\alpha,\alpha+\theta)$. 
As noted above, %$\theta\in(\tfrac14,\tfrac12)$ implies
$\alpha\in(0,\tfrac{1}{4}]$ and, hence, $\cos(2\pi\alpha)>0$.
The orbit $h(t)$ is smooth around $t=\alpha$, and
from \eqref{eq:hdashalph}, 
$h'(\alpha)=1+c\cos(2\pi\alpha)>1$. 
It also follows from \eqref{eq:h11} and
\eqref{eq:p0half} that $h''(t)=-2\pi c\sin(2\pi t)$. Thus $h''(t)$ has at most one zero for $t\in(\alpha,\alpha+\theta)\subset(\alpha,\alpha+\tfrac12)$ and, by the generalised Rolle's theorem, $h'(t)$ has at most two zeros on this interval. 

If $h'(t)$ has no zeros in the interval  $(\alpha,\alpha+\theta)$ then $h(t)$ is strictly monotonically increasing for $t\in(\alpha,\alpha+\theta)$ and $h(t)>h(\alpha)=0$ for all  $t\in(\alpha,\alpha+\theta)$.
If $h'(t)$ has one zero, $t_1$, in the interval  $(\alpha,\alpha+\theta)$
then $h(t)$ is unimodal on $(\alpha,\alpha+\theta)$ with a local maxima at $t=t_1\in(\alpha,\alpha+\theta)$. Since we already established that $h(\alpha)=0<h(\alpha+\theta)$ this again establishes that
$h(t)>0$ for all  $t\in(\alpha,\alpha+\theta)$.

The only remaining and only interesting case is when there exist $t_1$, $t_2$ with 
$\alpha<t_1<t_2<\alpha+\theta$ such that $h(t)$ has a local maxima at $t=t_1$, followed by a local minima at $t=t_2$.
If this occurs we require $h(t_2)\geq0$ to ensure that
$h(t)>0$ almost everywhere on $(\alpha,\alpha+\theta)$ and, hence, on
$(\alpha,\alpha+\tfrac12)$. Since $c\leq1$ ensures that $h'(t)\geq0$ for $t\in(\alpha,\alpha+\tfrac12)$, we only need to consider the case where $c>1$.

When $c>1$, there exists $t_1\in(\frac{1}{4},\frac{1}{2})$
and $t_2\in(\tfrac{1}{2},\tfrac{3}{4})$ such that $\cos(2\pi t_1)=\cos(2\pi t_2)=-1/c$. 
Now, $h'(t)>0$ for
$t\in(0,t_1)$ and $h'(t)<0$ for $t\in(t_1,t_2)$. 
We already showed that $\alpha\in(0,\tfrac14]$ which
implies $t_1>\alpha$.
The only way for the profile not to be a valid solution is if
\be\label{eq:nonvalidt2}
 t_2 < \alpha+\theta\quad\textnormal{and}\quad h(t_2)< 0.
\ee
To complete the proof we first identify the region in $(\theta,c)$-parameter space where $t_2<\alpha+\theta$, and then identify the subset of this region on which $h(t_2)<0$.

We first find two conditions on the parameters which are both equivalent to $t_2<\alpha+\theta$. The region in $(\theta,c)$-space where this holds is labelled II
in Figure~\ref{fig:RegEx}(a). 
Since we are considering $\theta\in(\tfrac14,\tfrac12)$ for which $\alpha\in(0,\tfrac14]$, the condition $t_2<\alpha+\theta$
is equivalent to $\tfrac12<t_2<\alpha+\theta<\tfrac34$.
Because $\sin{(2\pi t)}$ is strictly decreasing for
$t\in(\tfrac{1}{2},\tfrac{3}{4})$, 
we have that $t_2<\alpha+\theta$ if and only if
\be \label{eq:unimod0sin}
\sin(2\pi(\alpha+\theta))< \sin(2\pi t_2)=-\sqrt{1-1/c^2}.
\ee
The phase condition \eqref{eq:alpha0} and $\alpha\in(0,\tfrac14]$ imply that \eqref{eq:c0proof} holds. Then, 
\eqref{eq:unimod0sin} is equivalent to
\begin{align}\label{eq:unimod0sin2}-\sqrt{c^2-1}  > c\sin(2\pi(\alpha+\theta)) &=c\cos(2\pi\alpha)\sin(2\pi\theta)+c\sin(2\pi\alpha)\cos(2\pi\theta)\nonumber\\&=\sqrt{c^2-u^2(\theta)}\sin(2\pi\theta)+u(\theta)\cos(2\pi\theta).
\end{align}

Alternatively, because
$\cos{(2\pi t)}$ is increasing for $t\in(\tfrac{1}{2},\tfrac{3}{4})$, the condition $t_2<\alpha+\theta$ is also 
equivalent to
\begin{equation}
    \label{eq:unimod0cos}
    \cos(2\pi(\alpha+\theta))>\cos(2\pi t_2)=-\frac1c,
\end{equation}
or equivalently
\begin{align}\label{eq:unimod0cos2}
-1  < c\cos(2\pi(\alpha+\theta)) &=c\cos(2\pi\alpha)\cos(2\pi\theta)-c\sin(2\pi\alpha)\sin(2\pi\theta) \notag\\
&=\sqrt{c^2-u^2(\theta)}\cos(2\pi\theta)-u(\theta)\sin(2\pi\theta).
\end{align}
Rearranging we see that
$$0<-\sqrt{c^2-u^2(\theta)}\cos(2\pi\theta)<
1-u(\theta)\sin(2\pi\theta),$$
and, hence,
\be \label{eq:boundaryII}
c^2<u^2(\theta)+\left(\frac{1-u(\theta)\sin(2\pi\theta)}{\cos(2\pi\theta)}\right)^2
= 1 + \left(\frac{\sin(2\pi\theta)-u(\theta)}{\cos(2\pi\theta)}\right)^2.
\ee

Equation~\eqref{eq:boundaryII} along with the constraints $c>1$ and $c^2\geq u^2(\theta)$ defines the region II of the $(\theta,c)$-plane 
where the periodic orbit has a local minimum $t_2$ with $t_2\in(\alpha, \alpha + \theta)$,
which is illustrated in Figure~\ref{fig:RegEx}(a).
At the left end of this region, the right-hand side of \eqref{eq:boundaryII} is equal to $1$, so $\sin(2\pi\theta)=u(\theta)$. Since $\sin(2\pi\theta)$ is decreasing and $u(\theta)$ is increasing for $t\in[\tfrac14,\tfrac12]$ this point is uniquely defined, with $\theta\approx0.3676$.

Next, let us investigate the sign of $h(t)$ when $t_2<\alpha+\theta$.
The value of 
$\sin(2\pi\alpha)$ is defined by the phase condition \eqref{eq:alpha0}, while
$\cos(2\pi t_2)=-1/c$ and $t_2\in(\tfrac12,\tfrac34)$ implies
$\sin(2\pi t_2)=-\sqrt{1-1/c^2}$.
When $t_2<\alpha+\theta$, the
solution $h(t_2)$ defined by \eqref{eq:p0half} can be written as
\be
\label{eq:h(t_2)}
h(t_2)=t_2-\alpha+\frac{c}{2\pi}(\sin(2\pi t_2)-\sin(2\pi\alpha))
=(t_2-\alpha-\theta)+\frac14-\frac{1}{2\pi}\sqrt{c^2-1}.
\ee
Then
\be
\label{eq:h(t_2)<0 iff sin}
h(t_2)<0\quad\text{if and only if} \quad 2\pi(t_2-\theta-\alpha) < \sqrt{c^2-1}-\frac{\pi}{2}
\ee
and $\tfrac12<t_2<\alpha+\theta<\tfrac34$ 
implies that $-\tfrac\pi2<2\pi(t_2-\theta-\alpha)$.
With $\sin(2\pi\theta)>0>\cos(2\pi\theta)>-1$
and $u(\theta)<\tfrac\pi2$ for $\theta\in(\tfrac14,\tfrac12)$, it follows from 
\eqref{eq:unimod0sin2} that
\begin{align*}
\sqrt{c^2 - 1} - \frac{\pi}{2} & < -\sqrt{c^2 - u^2(\theta)} \sin(2\pi\theta) - u(\theta)\cos(2\pi\theta) - \frac{\pi}{2}
\leq
 - u(\theta)\cos(2\pi\theta) - \frac{\pi}{2}\\
& <u(\theta) - \frac{\pi}{2}
<0.
\end{align*}

Since $\cos(2\pi t)$ is increasing for $t\in(-\tfrac14,0)$, 
applying the cosine 
function to both sides of the second inequality in \eqref{eq:h(t_2)<0 iff sin} 
preserves the inequality;  so $h(t_2)<0$, if and only if, 
\begin{align*}
%c^2\sin{(\sqrt{c^2-1})}&=
    c^2\cos\big(\sqrt{c^2-1}-\tfrac\pi2\big)
    &>c^2\cos{(2\pi\left(t_2-\theta-\alpha\right))}\\
    &=c^2\cos{(2\pi t_2)}\cos{(2\pi(\alpha+\theta))+c^2\sin{(2\pi t_2)}\sin{(2\pi(\theta+\alpha))}}\\
    &=-c\cos{(2\pi(\alpha+\theta))-c\sqrt{c^2-1}\sin{(2\pi(\theta+\alpha))}}\\
    &=-\sqrt{c^2-{{u}^2(\theta)}}\cos(2\pi\theta)+{{u}(\theta)}\sin(2\pi\theta)\\
    &\hspace*{5em}-\sqrt{c^2-1}\left(\sqrt{c^2-{u}^2(\theta)}\sin{(2\pi\theta)}+{{u}(\theta)}\cos{(2\pi\theta)}\right).
\end{align*}
Since $\sin{(\sqrt{c^2-1})}=\cos{\left(\sqrt{c^2-1}-\tfrac\pi2\right)}$, we obtain that $h(t_2)<0$, if and only if,
\begin{align}
\label{eq:boundaryI}
c^2\sin{(\sqrt{c^2-1})}&>{u}(\theta) \left( \sin(2\pi \theta) - \cos(2\pi \theta) \sqrt{c^2 - 1} \right) \\
&\hspace*{5em}-\sqrt{c^2 - {u}^2(\theta)} \left( \sin(2\pi \theta) \sqrt{c^2 - 1} + \cos(2\pi \theta) \right).\notag
\end{align}

To analyze this condition, 
for $(\theta,c)$ in region I such that $t_2<\alpha+\theta$ we define 
\begin{align}
\label{eq:W-}
W_-(\theta,c)&={u}(\theta) \left( \sin(2\pi \theta) - \cos(2\pi \theta) \sqrt{c^2 - 1} \right) \\
&\hspace*{3em}-\sqrt{c^2 - {u}^2(\theta)} \left( \sin(2\pi \theta) \sqrt{c^2 - 1} + \cos(2\pi \theta) \right)-c^2\sin{(\sqrt{c^2-1})}.\notag
\end{align}
Then, from \eqref{eq:boundaryI}, it follows that $W_-(\theta,c)$ has the same sign as $h(t_2)$, and the zero level set $W_-(\theta,c) = 0$ 
defines the boundary curve of the region I where $h(t_2)<0$
within the region II where $t_2<\alpha+\theta$. Both of these regions are illustrated in Figure~\ref{fig:RegEx}(a).
(Note that, while the curve $c^2=u^2(\theta)+1$ formally satisfies $W_-(\theta,c)=0$, this curve lies completely outside region II \eqref{eq:boundaryII} and is not relevant to our analysis.)

We define the curve $c^2=w_-^2(\theta)$ implicitly by $W_-(\theta,w_-^2(\theta)) = 0$. 
The function $w_-^2(\theta)$ is then given by \eqref{eq:boundaryIc(theta)} for $\theta \in [\hat{\theta},\tfrac12]$. The threshold value $\hat\theta$ is defined in \eqref{eq:thetahat} as the value of $\theta$ such that $w_-^2(\hat\theta)=u^2(\hat\theta)$. The function $w_-^2(\theta)$ is easily seen to satisfy the conditions \eqref{eq:w^-2boundary} for the boundary. 

To determine which side of the boundary curve $w_-^2(\theta)$ corresponds to $h(t_2) < 0$, it is sufficient to evaluate $W_-$ at two points on either side of this curve. Doing this at two points on the lower boundary of region II where $c^2=u^2(\theta)$ we find 
$W_-(\tfrac12,\tfrac\pi2)<0<W_-(\tfrac{1}{2\pi}+\tfrac14,1)$.
Thus, by continuity we conclude that within region II (where $t_2<\alpha+\theta$),
the solution satisfies $h(t_2)<0$ for $c^2<w_-^2(\theta)$ and $h(t_2)\geq0$ for $c^2 \geq w_-^2(\theta)$. Hence, the candidate 
periodic solution defined by \eqref{eq:h11} and \eqref{eq:p0half}
is invalid for $u^2(\theta)\leq c^2<w_-^2(\theta)$ when 
$\theta\in(\hat\theta,\tfrac12)$,
but valid for all $c^2\geq u^2(\theta)$ for $\theta\in(\tfrac14,\hat\theta]$, and for all $c^2\geq w_-^2(\theta)$ for 
$\theta\in(\hat\theta,\tfrac12)$. This completes the proof for $\theta\in(\tfrac14,\tfrac12)$.

\textbf{Case \boldmath$\theta = \tfrac{1}{4}$:}
Since $u(\tfrac14)=0$, for all $c>0$ we obtain $\alpha=0$ in this case. 
Then, from
\eqref{eq:h11} and \eqref{eq:p0half} we have
$$
h(t)= \begin{dcases*}
t+\frac{c}{2\pi}\sin(2\pi t), & \textrm{for}\; $ t\in[\alpha,\alpha+\theta]=[0,\frac14],$\\
\frac12-t+\frac{c}{2\pi}\sin(2\pi t), & \textrm{for}\; $ t\in[\alpha+\theta,\alpha+\tfrac{1}{2}]=[\frac14,\frac12]$,
\end{dcases*} 
$$
from which it follows immediately that $h(t)>0$ for all $t\in(0,\tfrac12)=(\alpha,\alpha+\tfrac12)$.

\textbf{Case \boldmath$\theta\in(\frac{1}{2},\tfrac34]$:} From \eqref{eq:necc},
we require $c^2\geq v^2(\theta)$ for the phase $\alpha$ to be defined.
We observe then that $v(\theta)\geq0$ for $\theta\in(\tfrac12,\tfrac34]$, and it follows from the phase condition \eqref{eq:alpha1}
that $\alpha\in[0,\tfrac{1}{4})$, and
$\sin(2\pi\alpha)$ and $\cos(2\pi\alpha)$
satisfy
\eqref{eq:c1proof}.

The candidate solution $h(t)$, defined by \eqref{eq:h11} and \eqref{eq:phalf1}
has discontinuous derivative at $t = \alpha + \theta - \tfrac{1}{2}$.
At $t=\alpha$ the derivative is continuous, and from 
\eqref{eq:necc1} a necessary condition for $h'(\alpha)\geq0$ is that $c^2\geq 1+v^2(\theta)$. 

We next derive conditions equivalent to $h(\alpha+\theta-\tfrac12)\geq0$. From the definition of the solution in \eqref{eq:h11} and \eqref{eq:phalf}, 
and using \eqref{eq:c1proof} we obtain
\begin{align*}
    h(\alpha+\theta-\tfrac{1}{2})
    &= 1-2\theta+\big(\alpha+\theta-\frac12\big)-\alpha+\frac{c}{2\pi}\big(\sin{(2\pi(\alpha+\theta-\tfrac{1}{2}))}-\sin{(2\pi\alpha)}\big)\\
   & = \frac12-\theta
    -\frac{1}{2\pi}\big(c\sin{(2\pi(\alpha+\theta))}+v(\theta)\big)\\
    & = \frac12-\theta-\frac12\Big(\frac32-\theta\Big)
   -\frac{1}{2\pi}\big(c\cos{(2\pi\alpha)}\sin{(2\pi\theta)}+c\sin{(2\pi\alpha)}\cos{(2\pi\theta)}\big)\\
    &=-\frac14
    -\frac{1}{2\pi}\big(v(\theta)\cos(2\pi\theta)+
    \sqrt{c^2- v^2}(\theta)\sin(2\pi\theta)\big).  
\end{align*}
Thus, $h(\alpha+\theta-\tfrac{1}{2})\geq0$ if and only if 
\be \label{eq:halpthetahalfpos}
-\sqrt{c^2-v^2(\theta)}\,\sin(2\pi\theta)\geq \frac\pi2+v(\theta)\cos(2\pi\theta).
\ee
The right-hand side is positive since
$\tfrac\pi2+v(\theta)\cos(2\pi\theta)\geq
\tfrac\pi2-v(\theta)=\pi\big(2\theta-\tfrac12\big)>0$ for $\theta\in(\tfrac12,\tfrac34]$.
Then, squaring both sides of the inequality allows us to isolate $c^2$, yielding \eqref{eq:cQsin}.

The region where \eqref{eq:cQsin} holds is labelled $\cR_2$ in Figure~\ref{fig:RegEx}(a). 
In this region equations
\eqref{eq:cQsin}
and \eqref{eq:necc1} must both hold
for the candidate solution $h(t)$ defined by
\eqref{eq:h11} and \eqref{eq:phalf1}
to be valid.
We will show below that 
\eqref{eq:cQsin} implies \eqref{eq:necc1},
so a 1:1-periodic orbit for $\theta\in(\tfrac12,\tfrac34]$ can only exist
if \eqref{eq:cQsin} holds.
Region III in Figure~\ref{fig:RegEx}(a)
denotes the parameter set on which
\eqref{eq:necc1} holds (so $h'(\alpha)\geq0$) but \eqref{eq:cQsin} 
does not hold (so $h(\alpha+\theta-\tfrac12)<0$) and $h(t)$ does not define a valid solution.

We will show that \eqref{eq:cQsin} is a stronger condition than \eqref{eq:necc1}
by showing that
\be
\label{eq:vQsinproof}
1 + v^2(\theta) < v^2(\theta) + \left( \frac{\tfrac{\pi}{2} + v(\theta)\cos(2\pi\theta)}{\sin(2\pi\theta)} \right)^2
\ee
for all $\theta\in(\tfrac12,\tfrac34]$. 
Equations \eqref{eq:vQsinproof} and
\eqref{eq:cQsin} 
then imply that 
$c^2>1+v^2(\theta)$ for $\theta\in(\tfrac12,\tfrac34]$.

To show that \eqref{eq:vQsinproof} holds, define  
\begin{gather*}
\mu(\theta) := \frac{\tfrac{\pi}{2} + v(\theta)\cos(2\pi\theta)}{\sin(2\pi\theta)},\\
\nu(\theta) := \left(\mu(\theta) + 1\right)\sin(2\pi\theta) = \frac{\pi}{2} + v(\theta)\cos(2\pi\theta) + \sin(2\pi\theta).
\end{gather*}
Now $\nu(\tfrac12)=0$, while
$\sin(2\pi\theta) < 0 < v(\theta)$ for all $t\in(\tfrac{1}{2}, \tfrac{3}{4})$ implies
\[
\nu'(\theta) = -2\pi v(\theta)\sin(2\pi\theta)>0
\]  
for all $t\in(\tfrac{1}{2}, \tfrac{3}{4})$ and, hence,
$\nu(\theta)>0$ for all  $t\in(\tfrac{1}{2}, \tfrac{3}{4}]$.
Then
$$\mu(\theta) + 1 = \frac{\nu(\theta)}{\sin(2\pi\theta)}<0$$
for all  $t\in(\tfrac{1}{2}, \tfrac{3}{4}]$. Thus
$\mu^2(\theta) - 1 = \left(\mu(\theta) + 1\right)\left(\mu(\theta) - 1\right)>0$,
since both factors in the product are negative. But $\mu^2(\theta)>1$ is equivalent to \eqref{eq:vQsinproof}, as required.

To conclude the proof, we now establish that $h(t) > 0$ for almost all $t \in (\alpha, \alpha + \tfrac{1}{2})$, when \eqref{eq:cQsin} is satisfied.
By construction we have $h(\alpha)=h(\alpha+\tfrac12)=0$, and  \eqref{eq:cQsin} implies that $h(\alpha+\theta-\tfrac12)\geq0$; that is, $h$ is non-negative at the point where $h'$ is discontinuous.

Since \eqref{eq:cQsin} implies that $c^2>1+v^2(\theta)$, we then have from
\eqref{eq:hdashalph} and \eqref{eq:c1proof} that $h'(\alpha)>0$.
But it also follows from \eqref{eq:h11}, \eqref{eq:phalf1} 
and \eqref{eq:hdashalph}
that
$$
h'(\alpha+\tfrac12) =1+c\cos(2\pi(\alpha+\tfrac12)) 
 =  1 - c\cos(2\pi\alpha) = -h'(\alpha) <0$$
for all $\theta\in(\tfrac12,\tfrac34]$.

Next, we show that $h(t)>0$ for all
$t \in (\alpha, \alpha + \theta - \tfrac12)$. 
We have $h(\alpha)=0\leq h(\alpha + \theta - \tfrac12)$, and $h'(\alpha)>0$. The function $h$ is smooth on the interval 
$t\in(\alpha,\alpha + \theta - \tfrac12)$, with
$h'(t)=-1+c\cos(2\pi t)$. Since
$(\alpha,\alpha + \theta - \tfrac12)\subset(0,\tfrac12)$,
it follows that there exists a unique point $t_1\in(0,\tfrac12)$
such that $\cos(2\pi t_1)=1/c$ and $h'(t_1)=0$. Then 
$h'(t)>0$ for $t<t_1$ and $h'(t)<0$ for $t>t_1$.
Since $h'(\alpha)>0$ it follows that $\alpha<t_1$. There are then two possibilities. Either $t_1\geq  \alpha + \theta - \tfrac12$ and $h(t)$ is
monotonically increasing for $t\in(\alpha,\alpha + \theta - \tfrac12)$,
or $t_1\in(\alpha,\alpha + \theta - \tfrac12)$ and $h(t)$ is unimodal
for $t\in(\alpha,\alpha + \theta - \tfrac12)$ with a local maximum at $t=t_1$. In both cases, $h(\alpha)=0\leq h(\alpha + \theta - \tfrac12)$
implies that $h(t)>0$ for all $t\in(\alpha,\alpha + \theta - \tfrac12)$.

It remains to consider $t \in (\alpha + \theta - \tfrac{1}{2},\alpha+\tfrac12)$, for which we have shown that  $h(\alpha + \theta - \tfrac{1}{2})\geq0=h(\alpha+\tfrac12)$ with $h'(\alpha+\tfrac12)<0$.
We have $h'(t)=1+c\cos(2\pi t)$, and
an argument analogous to the case $\theta \in (\tfrac{1}{4}, \tfrac{1}{2})$ shows that $h'(t)$ has at most two zeros on this interval.
These zeros will occur at $t_1<\tfrac12<t_2$ which satisfy
$\cos(2\pi t_1)=\cos(2\pi t_2)=-\tfrac1c$, with $h'(t)<0$ for $t\in(t_1,t_2)$ and $h'(t)>0$ for $t<t_1$ and for $t>t_2$. But since $h'(\alpha+\tfrac12)<0$, it follows that $t_1<\alpha+\tfrac12<t_2$. 
Thus, $h'(t)$ has at most one zero on the interval
$(\alpha + \theta - \tfrac{1}{2},\alpha+\tfrac12)$,
and it follows that $h(t)$ is either monotonically decreasing on this interval, or unimodal with a local maximum.
In either case $h(t)>0$ for all
$t \in (\alpha + \theta - \tfrac{1}{2},\alpha+\tfrac12)$ as required,
which completes the proof for this 
last case.~\hfill$\qed$

\subsection{Proof of Theorem~\ref{thm:po2}}
\label{sec:pf:po2}

This proof follows the same structure as that of Theorem~\ref{thm:po}, and we highlight the modifications required 
for the
phase $\alpha\in[\tfrac 14,\tfrac 3 4]$. 
Since we only need to consider $\theta\in(0,\tfrac12)$ reduces the number of subcases that we need to consider.

We consider the profile \eqref{eq:h11},\eqref{eq:p0half}, which is our candidate 1:1-periodic orbit. We already established the necessary condition \eqref{eq:necc2} for such a solution to exist. This condition ensures that there exists a unique phase \( \alpha\in[\tfrac14,\tfrac34] \) satisfying the phase condition \eqref{eq:alpha0} with \( h(\alpha) = h(\alpha + \tfrac{1}{2}) = 0 \).
In the 
proof of Theorem~\ref{thm:po}, we also showed that this profile \( h(t) \) satisfies the symmetry condition \eqref{eq:sym}, and so, as in the proof of 
Theorem~\ref{thm:po}, it suffices to verify the positivity condition \eqref{eq:hleftsign} to complete the proof.
By Corollary~\ref{cor:sym2}, we restrict our analysis to the interval \( \theta \in [\tfrac14,\tfrac12) \).

We begin by showing that $h'(t)<0$ for all $t \in (\alpha + \theta, \alpha + \tfrac12)$. Since $h(\alpha+\tfrac12)=0$, this establishes that
$h(t)>0$ for all $t \in [\alpha + \theta, \alpha + \tfrac12)$.

For $\theta \in [\tfrac14, \tfrac{1}{2})$ the right-hand side of 
\eqref{eq:alpha0} is non-negative and, hence, 
$\alpha \in [\tfrac14,\tfrac12]$. This implies that 
$c\cos(2\pi\alpha)\leq0$. From \eqref{eq:hdashalph} we have 
$h'(\alpha)=1+c\cos(2\pi\alpha)$, and so
a necessary condition for $h'(\alpha)\geq0$ is that 
$-c\cos(2\pi\alpha)\leq1$ (which is equivalent to the right-hand inequality in \eqref{eq:necc2}).

Because $\cos(2\pi t)$ is strictly increasing on $[\tfrac12,1]$, 
for all $t \in (\alpha + \theta, \alpha + \tfrac12)\subset(\tfrac12,1)$ 
we have 
$c\cos(2\pi t) < c\cos\left(2\pi (\alpha + \tfrac{1}{2})\right) = -c\cos(2\pi \alpha)\leq 1$. Then from 
\eqref{eq:h11}, \eqref{eq:p0half} for $t \in (\alpha + \theta, \alpha + \tfrac12)$ we conclude
$$
h'(t)=-1+c\cos(2\pi t)<0,
$$
as required.

It remains to show that $h(t)>0$ for almost every $t\in(\alpha,\alpha+\theta)$. This is trivially true if $u^2(\theta)\leq c <1$, since then $h'(t)=1+c\cos(2\pi t)>0$ for all $t\in(\alpha,\alpha+\theta)$. 

If $c=1$ and $u^2(\theta)\leq c$ 
then $h'(t)\geq0$ for all $t\in(\alpha,\alpha+\theta)$.
For $\theta\in(\tfrac14,\tfrac12)$ it follows that $c^2=1<1+u^2(\theta)$
and from \eqref{eq:hdashalph} and \eqref{eq:c0proof3} that 
$$h'(\alpha)=1+c\cos(2\pi\alpha)=1-\sqrt{c^2-u^2(\theta)}>0.$$
On the other hand, if $c=1$ and $\theta=\tfrac14$ then $\alpha=\tfrac12$ and
$h'(\alpha)=0$. Differentiating twice reveals that $h''(\alpha)=0$ and 
$h'''(\alpha)>0$. In both cases $\theta=\tfrac14$ and $\theta\in(\tfrac14,\tfrac12)$ it follows that
$h(t)>0$ for all $t\in(\alpha,\alpha+\theta)$
when $u^2(\theta)\leq c\leq1$. 

It remains only to consider $h(t)$ for $t\in(\alpha,\alpha+\theta)$ when $c>1$.
In this case, there exist $t_1\in(\frac{1}{4},\frac{1}{2})$
and $t_2\in(\tfrac{1}{2},\tfrac{3}{4})$ such that $\cos(2\pi t_1)=\cos(2\pi t_2)=-1/c$, with $h'(t)>0$ for
$t\in(0,t_1)$ and $t\in(t_2,1)$, and
$h'(t)<0$ for $t\in(t_1,t_2)$. The condition \eqref{eq:necc1.5}
ensures that $h'(\alpha)\geq0$, which implies that $\alpha\leq t_1$.
We will show below that $t_2<\alpha+\theta$. Then, the function
$h(t)$ has a unique local minimum at $t=t_2$ in the interval
$(\alpha,\alpha+\theta)$, and showing that $h(t)>0$ for almost every $t\in(\alpha,\alpha+\theta)$ is equivalent to showing that $h(t_2)\geq0$.

Next, we show that $t_2<\alpha+\theta$. The critical point $t_2$ satisfies \( \cos(2\pi t_2) = -1/c \) and $\sin{(2\pi t_2)}=-\sqrt{1-1/c^2}$, while
$\alpha\in[\tfrac14,\tfrac12]$ satisfies \eqref{eq:c0proof3}.
Since we consider $\theta\in[\tfrac14,\tfrac12)$, we have $\cos(2\pi\theta) \leq 0 < \sin(2\pi\theta)$. 
Note also that, since $u(\theta)$ and $\sin(2\pi\theta)$ are both monotonic for $\theta\in[\tfrac14,\tfrac12]$, 
it follows that 
$$u(\theta)\sin(2\pi\theta)\leq
\begin{cases}
    u(\tfrac38)\sin(\frac\pi2)=\frac{\pi}{4}, & \theta\in[\tfrac14,\tfrac38],\\
       u(\tfrac{5}{12})\sin(\frac{3\pi}{4})=\frac{\pi}{3\sqrt{2}}, & \theta\in[\tfrac38,\tfrac{5}{12}],\\
    u(\tfrac12)\sin(\frac{5\pi}{6})=\frac{\pi}{4}, & \theta\in[\tfrac{5}{12},\tfrac12],
\end{cases}$$
and hence $u(\theta)\sin(2\pi\theta)<1$ for all
$\theta\in[\tfrac14,\tfrac12]$. Thus for 
$\theta\in[\tfrac14,\tfrac12)$ we get
\begin{align*}
    -1 & < -u(\theta)\sin(2\pi\theta)
    \leq  -\sqrt{c^2-u^2(\theta)}\cos(2\pi\theta)
    -u(\theta)\sin(2\pi\theta)\\
     & = c\cos(2\pi\alpha)\cos(2\pi\theta)-c\sin(2\pi\alpha)\sin(2\pi\theta)
     = c\cos(2\pi(\alpha+\theta)).
\end{align*}
Hence,
$$\cos(2\pi(\alpha+\theta))>-\frac1c=\cos(2\pi t_2).$$
Since $t_2\in(\tfrac12,\tfrac34)$,
$\alpha+\theta\in[\tfrac12,1)$ and 
$\cos{(2\pi t)}$ is strictly increasing for $t\in[\tfrac12,1]$, it follows that $t_2<\alpha+\theta$.

It remains to show that $h(t_2)\geq0$. Since $t_2<\alpha+\theta$, the value of 
$h(t_2)$ defined by \eqref{eq:h11}, \eqref{eq:p0half}  is given by \eqref{eq:h(t_2)}
from which  \eqref{eq:h(t_2)<0 iff sin} follows and defines the condition for $h(t_2)<0$. Now $\tfrac12<t_2<\alpha+\theta<1$ implies that 
$2\pi(t_2-\theta-\alpha)\in(-\pi,0)$. From \eqref{eq:necc1.5}
we obtain $\sqrt{c^2-1}\leq u(\theta)<\frac\pi2$ for $\theta\in[\tfrac14,\tfrac12)$ and, hence, $\sqrt{c^2-1}-\frac\pi2\in[-\frac\pi2,0)$.
Then, because $\cos{(2\pi t)}$ is  increasing for $t\in(-\tfrac12,0)$, applying the cosine function to both sides of \eqref{eq:h(t_2)<0 iff sin} preserves the inequality; so $h(t_2)<0$, if and only if,
\[c^2\cos{\left(\sqrt{c^2-1}-\tfrac\pi2\right)}>c^2\cos{(2\pi\left(t_2-\theta-\alpha\right))}.\]
This expression can be simplified, similarly to how \eqref{eq:boundaryI} is obtained in the proof of Theorem~\ref{thm:po}, but this time using
equation \eqref{eq:c0proof3} in place of \eqref{eq:c0proof}, to arrive at the condition 
\begin{align}
\label{eq:boundaryIb}
    c^2\sin{(\sqrt{c^2-1})}&>{u}(\theta) \left( \sin(2\pi \theta) - \cos(2\pi \theta) \sqrt{c^2 - 1} \right) \\&\hspace*{5em}+\sqrt{c^2 - {u}^2(\theta)} \left( \sin(2\pi \theta) \sqrt{c^2 - 1} + \cos(2\pi \theta) \right).\nonumber
\end{align}
We define 
\begin{align}
\label{eq:W+}
W_+(\theta,c)&={u}(\theta) \left( \sin(2\pi \theta) - \cos(2\pi \theta) \sqrt{c^2 - 1} \right) \\
&\hspace*{3em}+\sqrt{c^2 - {u}^2(\theta)} \left( \sin(2\pi \theta) \sqrt{c^2 - 1} + \cos(2\pi \theta) \right)-c^2\sin{(\sqrt{c^2-1})}.\notag
\end{align}

Then, from \eqref{eq:boundaryIb}, it follows that $W_+(\theta,c)$ has the same sign as $h(t_2)$. We define the curve $c^2=w_+^2(\theta)$, on which $h(t_2)=0$, implicitly by $W_+(\theta,w_+^2(\theta)) = 0$. 
The function $w_+^2(\theta)$ is then given by \eqref{eq:boundaryIc(theta)2} for $\theta \in [\tfrac14,\hat{\theta}]$. The threshold value $\hat\theta$ is defined in \eqref{eq:thetahat} as the value of $\theta$ such that $w_+^2(\hat\theta)=u^2(\hat\theta)=w_-^2(\hat{\theta})$. The function $w_+^2(\theta)$ satisfies the boundary conditions in \eqref{eq:w+2boundary}. 

To determine which side of the boundary curve $w_+^2(\theta)$ corresponds to $h(t_2) < 0$, it is sufficient to evaluate $W_+$ at two points on either side of this curve.
Doing this at two points on the line $\theta=\tfrac 13$, with one on the lower boundary $c^2=u^2(\theta)$ and the other one on the upper boundary $c^2=u^2(\theta)+1$, we find 
$W_+\big(\tfrac13,(u^2(\frac13)+1)^\frac12\big) < 0 < W_+(\tfrac 13,u(\tfrac13))$.
Thus $h(t_2)\geq0$ for $c^2 \leq w_+^2(\theta)$.

Thus for $\theta\in(\tfrac14,\hat\theta]$ we require $c^2\leq w_+^2(\theta)$ and
from \eqref{eq:necc1.5} that $c^2\leq 1+u^2(\theta)$. But $h(t_2)\geq0$ requires $h(t_1)>0$ and, hence, $h'(\alpha)>0$, while $c^2=1+u^2(\theta)$ implies
$h'(\alpha)\leq0$. Thus, $w_+^2(\theta)< 1+u^2(\theta)$ for $\theta\in(\tfrac14,\hat\theta]$, and so the curve $c^2= w_+^2(\theta)$ 
defines the upper boundary curve of the region $\mathcal{S}$, 
which is illustrated in Figure~\ref{fig:RegEx2}(a).
Thus the candidate periodic solution defined by \eqref{eq:h11} and \eqref{eq:p0half} is 
invalid for any $c$ when  $\theta\in(\hat\theta,\tfrac12)$, but valid for all $u^2(\theta)\leq c^2\leq w^2_+(\theta)$ for $\theta\in[\tfrac14,\hat\theta]$.~\hfill$\qed$

%%%%%%%%%%%%%%%%%%%%%%%%%%%%%%%%%%%%%%%%%%%%%%%%%%%%%%%%%%%%%%%%%%%%%%%%%%%

\subsection{Proof of Theorem~\ref{thm:w}}
\label{sec:pf:w}

In Theorem~\ref{thm:po} it was shown that symmetric 1:1-periodic orbits with phase $\alpha\in[-\tfrac14,\tfrac14]$ exist in the region $(\theta,c)\in\cR$.
For $\tau$ not an integer or half integer, so $\tau\ne j/2$ for $j\in\N$, the characteristic polynomial $\Delta(\lambda)$ defining the Floquet multipliers of these periodic orbits was derived in Theorem~\ref{thm:Poly}; this was used in Theorem~\ref{thm:stabc} to show that these periodic orbits are stable for all sufficiently large $c$. 

The standard approach  to find where these orbits become unstable in a smooth system is to look for the first bifurcation where a Floquet multiplier crosses the unit circle. We will adopt the same approach here, but need to take some care, as our system is only piecewise smooth and the Floquet multipliers do not always vary continuously. 
In particular, it follows from Theorem~\ref{thm:Poly} that when  $\tau\ne j/2$ for $j\in\N$ the number of non-trivial Floquet multipliers is $\lceil2\tau\rceil$ with their locations governed  by $\Delta_{\lceil2\tau\rceil}$. 
Consequently, if $\tau$ crosses $j/2$ for $j\in\N$ the Floquet multipliers must vary discontinuously as the characteristic polynomial switches between \eqref{eq:polynomiallambda2kp1} and \eqref{eq:polynomiallambda2kp2}, and not even the number of multipliers is conserved.

To ensure that the Floquet multipliers vary continuously, we will consider delays $\tau=k+\theta$ for fixed $k\in\N_0$ and varying $\theta$ with either $\theta\in(0,\tfrac12)$ or $\theta\in(\tfrac12,1)$, so that we do not cross any of the lines $\tau=j/2$ for $j\in\N$. Furthermore, we will only consider variation of the two parameters $c$ and $\theta$ with $\theta\in(0,\tfrac12)$ and $(\theta,c)\in\cR_1$ or  $\theta\in(\tfrac12,1)$ and $(\theta,c)\in\cR_2$.

We first consider the case where 
$\theta\in(0,\tfrac12)$ and $(\theta,c)\in\cR_1$, and so the Floquet multipliers are defined by $\Delta(\lambda)=(\lambda-1)\Delta_{2k+1}(\lambda)$ where $\Delta_{2k+1}(\lambda)$ is defined by \eqref{eq:polynomiallambda2kp1}.
For $(\theta,c)\in\cR_1$,
the constraint \eqref{eq:necc} ensures that the phase $\alpha\in[-\tfrac14,\tfrac14]$, defined by \eqref{eq:alpha0}, is a smooth function of $\theta$ and $c$.
It then follows from \eqref{eq:hdashalph} that $h'(\alpha)$ is a smooth function of $\theta$ and $c$, with $h'(\alpha)\geq1$ since $\alpha\in[-\tfrac14,\tfrac14]$ implies $c\cos(2\pi\alpha)\geq0$.
Thus for $\theta\in(0,\tfrac12)$ and $(\theta,c)\in\cR_1$, the coefficients of
$\Delta_{2k+1}(\lambda)$ and, hence, the Floquet multipliers vary smoothly with $c$ and $\theta$. Thus to find the stability boundary $\textup{\textbf{B}}$
of symmetric 1:1-periodic orbits with phase $\alpha\in[-\tfrac14,\tfrac14]$
for any fixed value of $\theta\in(0,\tfrac12)$ it is sufficient to find the largest value of $c$ for which a bifurcation occurs. We will first identify the bifurcation points, and then consider which one causes the loss of stability of the 1:1-periodic orbit.

Bifurcations occur when $|\lambda|=1$, that is when a Floquet multiplier satisfies $\lambda=\pm1$, 
or a complex conjugate pair of Floquet multipliers satisfy
$\lambda=e^{\pm i\omega}$ with $\omega\in(0,\pi)$ and we will consider each of these cases separately for $\theta\in(0,\tfrac12)$.

Consider first the case of $\lambda=1$.
  see that $\Delta_{2k+1}(1)=0$, 
 if and only if, $h'(\alpha)=1$, in which case $\lambda=1$ is a simple root of
$\Delta_{2k+1}$.
But in this case \eqref{eq:hdashalph} for $\theta\in(0, \frac{1}{2})$ implies that
$\sin(2\pi\alpha)=\pm1$. It then follows from \eqref{eq:alpha0} that $c^2=u^2(\theta)$. For $\theta\in[\tfrac12-\hat\theta,\hat\theta]$, from Theorem~\ref{thm:po}, this corresponds to the lower boundary of $\cR_1$ where these 1:1-periodic orbits cease to exist. From Theorem~\ref{thm:po2} this is also the boundary of the existence region $\cS$ of unstable 1:1-periodic orbits with phase $\alpha\in[\tfrac14,\tfrac34]$. As already illustrated in Figure~\ref{fig:RegEx2} and discussed after Theorem~\ref{thm:po2},
these two periodic orbits undergo a fold bifurcation of periodic orbits along this curve. 

Although this bifurcation occurs for $\tau=k+\theta$ with $\theta\in[\tfrac12-\hat\theta,\hat\theta]$ for all $k\in\N_0$, we will see below that it only defines the stability-boundary for 1:1-periodic orbits when $k=0$. Note that there are no fold bifurcations of 1:1-periodic orbits for $\theta\in(0,\tfrac12-\hat\theta)$ or for $\theta\in(\hat\theta,\tfrac12)$, because for these values of $\theta$ the 1:1-periodic orbits do not exist for $u^2(\theta)\leq c^2<w_-(\theta)$.

When $k$ is even, $h'(\alpha)\geq1$ and
$$\Delta_{2k+1}(-1)=-2(-1)^{2k}+\frac{4}{h'(\alpha)}(-1)^k-\frac{4}{h'(\alpha)^2}=-2\bigg[\Big(1-\frac{1}{h'(\alpha)}\Big)^2+\frac{1}{h'(\alpha)^2}\bigg],$$
implies $\Delta_{2k+1}(-1)<0$, while when $k$ is odd,
$\Delta_{2k+1}(-1)\leq-2$ follows trivially.
So, it is not possible for $\lambda=-1$ to be a Floquet multiplier.

Thus, we consider the possibility of complex conjugate roots with modulus one, so that $\lambda=e^{\pm i \omega}$ with $\omega\in(0,\pi)$.
Substituting
$\lambda=e^{i\omega}$ into  
$\Delta_{2k+1}(\lambda)=0$
and collecting
real and imaginary parts results in
\begin{gather} \label{eq:realpartzero}
\cos((2k+1)\omega)-\cos(2 k\omega)
+\frac{4}{h'(\alpha)}\cos(k\omega)-\frac{4}{[h'(\alpha)]^2}=0,\\ \label{eq:imagpartzero}
\sin((2k+1)\omega)-\sin(2 k\omega)+\frac{4}{h'(\alpha)}\sin( k\omega)=0.
\end{gather}

Isolating $h'(\alpha)$ in \eqref{eq:imagpartzero} gives
\be \label{eq:hdashzero}
h'(\alpha)=\frac{4\sin(k\omega)}{\sin(2k\omega)-\sin((2k+1)\omega)}.
\ee
Then eliminating $h'(\alpha)$ from \eqref{eq:realpartzero} results in
\begin{align} \label{eq:elimzero}
0&=
\sin^2(k\omega)
\Bigl(\cos((2k+1)\omega)-\cos(2k\omega)\Bigr)\\ \notag
&\hspace*{1em} 
+\sin(k\omega)\cos(k\omega)\Bigl(\sin(2k\omega)-\sin((2k+1)\omega)\Bigr)
%\\ \notag &\hspace*{5em} 
-\frac14\Bigl(\sin(2k\omega)-\sin((2k+1)\omega)\Bigr)^2.
\end{align}

Using standard trigonometric identities, we can further
simplify equation \eqref{eq:elimzero} to
\begin{align*}
0&=
\sin^2(k\omega)
\Bigl(\cos((2k+1)\omega)-\cos(2k\omega)\Bigr)\\ \notag
&\hspace*{2em} 
+\frac12\sin(2k\omega)\Bigl(\sin(2k\omega)-\sin((2k+1)\omega)\Bigr)
-\frac14\Bigl(\sin(2k\omega)-\sin((2k+1)\omega)\Bigr)^2\\
&=\sin^2(k\omega)
\Bigl(\cos((2k+1)\omega)-\cos(2k\omega)\Bigr)
+\frac14\sin^2(2k\omega)-\frac14\sin^2((2k+1)\omega)\\
&=\sin^2(k\omega)
\Bigl(\cos((2k+1)\omega)-\cos(2k\omega)\Bigr)
-\frac18\cos(4k\omega)+\frac18\cos((4k+2)\omega)\\
&=-2\sin^2(k\omega)\sin((4k+1)\omega/2)\sin(\omega/2)
-\frac14\sin((4k+1)\omega)\sin(\omega)\\
&=-\sin(\omega/2)\sin((4k+1)\omega/2)\Bigl(
2\sin^2(k\omega)+\cos((4k+1)\omega/2)\cos(\omega/2)\Bigr)\\
&=-\sin(\omega/2)\sin((4k+1)\omega/2)\Bigl(
\frac32-\cos^2(k\omega)+\frac12\cos((2k+1)\omega)
\Bigr).
\end{align*}

The term in brackets and the $\sin(\omega/2)$ term 
are both strictly positive for $\omega\in(0,\pi)$; thus,
all the complex Floquet multipliers with modulus one are
given by the roots of
$$\sin((4k+1)\omega/2)=0, \qquad \omega\in(0,\pi).$$
Hence, a necessary condition for a pair of complex conjugate Floquet multipliers 
of modulus one
is that
\be \label{eq:omegazero}
\omega=\frac{2j\pi}{4k+1}, \qquad j=\{1,2,\ldots,2k\}.
\ee

Equation~\eqref{eq:omegazero} shows that there are no Floquet multipliers $\lambda=e^{\pm i\omega}$ when $k=0$; that is when $\tau=\theta\in(0,\tfrac12)$. 

For $k>0$ and $j$ satisfying \eqref{eq:omegazero}, we
substitute \eqref{eq:omegazero} into \eqref{eq:hdashzero} to obtain
\begin{align} \notag
h'(\alpha) &=
\frac{-4\sin\big(\frac{2jk\pi}{4k+1}\big)}{\sin\big(\frac{2j(2k+1)\pi}{4k+1}\big)-\sin\big(\frac{4kj\pi}{4k+1}\big)} 
= \frac{-2\sin\big(\frac{2jk\pi}{4k+1}\big)}{
\sin\big(\frac{j\pi}{4k+1}\big)\cos\big(j\pi\big)} \\ \notag
&
= 2(-1)^{j+1}\sin\Big(\frac{2jk\pi}{4k+1}\Big)
\csc\Big(\frac{j\pi}{4k+1}\Big) 
\\ \notag &
= 2(-1)^{j+1}\csc\Big(\frac{j\pi}{4k+1}\Big) 
\sin\Big(\frac{j\pi}{2}-\frac{j\pi}{2(4k+1)}\Big).
\end{align}
Thus, when $j$ is odd 
\be \label{eq:hdashjodd}
h'(\alpha) 
= 2(-1)^{(j-1)/2}\csc\Big(\frac{j\pi}{4k+1}\Big) 
\cos\Big(\frac{j\pi}{2(4k+1)}\Big) 
= (-1)^{(j-1)/2}\csc\Big(\frac{j\pi}{2(4k+1)}\Big),
\ee
while for $j$ even
\be \label{eq:hdashjeven}
h'(\alpha) 
= 2(-1)^{j/2}\csc\Big(\frac{j\pi}{4k+1}\Big) 
\sin\Big(\frac{j\pi}{2(4k+1)}\Big)
= (-1)^{j/2}\sec\Big(\frac{j\pi}{2(4k+1)}\Big).
\ee

Notice that for $j\in\{2,3,\ldots,2k\}$,
\be \label{eq:cscsec}
\csc\Big(\frac{\pi}{2(4k+1)}\Big)>
\csc\Big(\frac{j\pi}{2(4k+1)}\Big)>\sqrt{2}>
\sec\Big(\frac{j\pi}{2(4k+1)}\Big)>1>0.
\ee
Thus, since we require $h'(\alpha)\geq0$ to satisfy the phase condition \eqref{eq:halpha}, 
only values of $j$ that result in even powers of $-1$ in
\eqref{eq:hdashjodd} and \eqref{eq:hdashjeven} are valid. 
Hence $j=1,4,5,8,9,\ldots \leq 2k$. From
\eqref{eq:hdashjodd}, $j=1$ implies that $h'(\alpha)>0$ for all $k\in\N$, and then it follows from
\eqref{eq:cscsec} that out of all the eligible values, $j=1$ results in the largest value of $h'(\alpha)$.  
Furthermore, from \eqref{eq:hdashalpcu} we know that
the largest value of $h'(\alpha)$ at a torus bifurcation
corresponds to the largest value of $c$ at such a bifurcation, 
and equation \eqref{eq:T} for $\theta\in(0,\tfrac12)$ follows from \eqref{eq:hdashalpcu} and \eqref{eq:hdashjodd} with $j=1$.
From \eqref{eq:T} it follows that 
for all $k\geq1$,
\[
c^2 = {u}^2(\theta) + \left( \csc{\left( \frac{\pi}{2(4k+1)} \right)} - 1 \right)^2 \geq 
0^2+ (3 - 1)^2 = 4 > 1+\frac{\pi^2}{4\phantom{^2}}.
\]
Thus, for each $(\tau,c)\in\textbf{T}$, where $\tau=k+\theta$, we have $(\theta,c)\in\mathcal{R}_1$; that is, the torus bifurcation curve
is contained in the interior of the region where the 1:1-periodic orbits exist, and so defines their stability boundary.
Segments of the curves for $j=4$ and $j=5$ are shown in Figure~\ref{fig:Tunst}, while the segments with $j=1$ are all contained in \textbf{B}.

Equation~\eqref{eq:w} for $\theta\in(0,\tfrac12)$ also follows immediately from \eqref{eq:omegazero} with $j=1$,
since the rotation number is $\rho=\omega/2\pi$.

Now consider the case of $\tau=k+\theta$ for fixed $k\in\N_0$ 
with $\theta\in(\tfrac12,1)$
and $(\theta,c)\in\cR_2$. The 
Floquet multipliers are then defined by $\Delta(\lambda)=(\lambda-1)\Delta_{2k+2}(\lambda)$, where $\Delta_{2k+2}(\lambda)$ is defined by \eqref{eq:polynomiallambda2kp2}. The remaining steps in the proof are similar to the previous case, but 
$\Delta_{2k+2}(\lambda)$ is a polynomial of degree $2k+2$, whereas $\Delta_{2k+1}(\lambda)$ was an odd degree polynomial, so the algebra will play out differently, as we will see below.

The constraint \eqref{eq:necc} ensures that the phase $\alpha\in[-\tfrac14,\tfrac14]$, defined by \eqref{eq:alpha1}, is a smooth function of $\theta$ and $c$, and from  \eqref{eq:hdashalph}
that $h'(\alpha)$ and the coefficients of
$\Delta_{2k+2}(\lambda)$ 
are smooth function of $\theta$ and $c$;
 hence the Floquet multipliers vary smoothly with $c$ and $\theta$.

It is easy show that $\Delta_{2k+2}(\lambda)$ 
does not have any roots $\lambda=\pm1$ with $h'(\alpha)>0$. 
So we again look for roots $\lambda=e^{\pm i\omega}$ with 
$\omega\in(0,\pi)$.
Substituting
$\lambda=e^{i\omega}$ into  $\Delta_{2k+2}(\lambda)=0$
and separating real and imaginary parts
results in
\begin{gather} \label{eq:realpartone}
\cos((2k+2)\omega)-\cos((2k+1)\omega)
+\frac{4}{h'(\alpha)}\cos((k+1)\omega)+\frac{4}{[h'(\alpha)]^2}=0,\\ \label{eq:imagpartone}
\sin((2k+2)\omega)-\sin((2k+1)\omega)+\frac{4}{h'(\alpha)}\sin((k+1)\omega)=0.
\end{gather}
Isolating $h'(\alpha)$ in \eqref{eq:imagpartone} gives
\be \label{eq:hdashone}
h'(\alpha)=\frac{-4\sin((k+1)\omega)}{\sin((2k+2)\omega)-\sin((2k+1)\omega)}.
\ee
Then eliminating $h'(\alpha)$ from \eqref{eq:realpartone} and applying trigonometric identities yields
\begin{align} \label{eq:elimone}
&0 =4\sin((k+1)\omega)\Bigl(\sin(k\omega)-\sin((k+1)\omega)\Bigr)
+\Bigl(\sin((2k+2)\omega)-\sin((2k+1)\omega)\Bigr)^2 \\ \notag
& =-8\sin((k+1)\omega)\sin(\omega/2)\cos((2k+1)\omega/2)
+4\sin^2(\omega/2)\cos^2((4k+3)\omega/2)\\ \notag
&=-4\sin(\omega/2)\bigg[2\sin((k+1)\omega)\cos((2k+1)\omega/2)
-\sin(\omega/2)\Big(1-\sin^2((4k+3)\omega/2)\!\Big)\!\bigg]\\ \notag
&=-4\sin(\omega/2)\bigg[\sin((4k+3)\omega/2)+\sin(\omega/2)
-\sin(\omega/2)\Big(1-\sin^2((4k+3)\omega/2)\bigg]\\
&=-4\sin(\omega/2)\sin((4k+3)\omega/2)\bigg[1+\sin(\omega/2)\sin((4k+3)\omega/2)\bigg]. \notag
\end{align}

The other terms are again strictly positive for $\omega\in(0,\pi)$, thus
all the complex conjugate Floquet multipliers 
with modulus one
are
given by the roots of
$$\sin((4k+3)\omega/2)=0, \qquad \omega\in(0,\pi).$$
Hence a necessary condition for modulus one complex Floquet multipliers is that
\be \label{eq:omegaone}
\omega=\frac{2j\pi}{4k+3}, \qquad j=\{1,2,\ldots,2k+1\}.
\ee

The rest of the proof is similar to the $\theta\in(0,\tfrac12)$ case. 
In particular, substituting
\eqref{eq:omegaone} into \eqref{eq:hdashone} results in
$$
h'(\alpha) = 
\begin{cases}
(-1)^{(j-1)/2}\csc\Big(\frac{j\pi}{2(4k+3)}\Big), &
\text{for $j$ odd,}\\
(-1)^{(j-2)/2}\sec\Big(\frac{j\pi}{2(4k+3)}\Big), &
\text{for $j$ even.}
\end{cases}
$$
\sloppy{The trigonometric terms are again positive for $j$ satisfying
\eqref{eq:omegaone}, so we require $j=1,2,5,6,9,10,\ldots \leq 2k+1$ for $h'(\alpha)>0$. The case $j=1$, this time for all $k\in\N_0$,
again results in the largest value of $h'(\alpha)$ and $c$ and so defines the stability boundary.} Equations~\eqref{eq:T} and \eqref{eq:w} follow for $\theta\in(\tfrac12,1)$.
Segments of the curves for $j=2$ and $j=5$ are shown in Figure~\ref{fig:Tunst}, while the $j=1$ segments are all contained 
in \textbf{B}.~\hfill$\qed$
\vspace*{1.15ex}

\begin{figure}
    \centering
    \includegraphics[width=\textwidth]{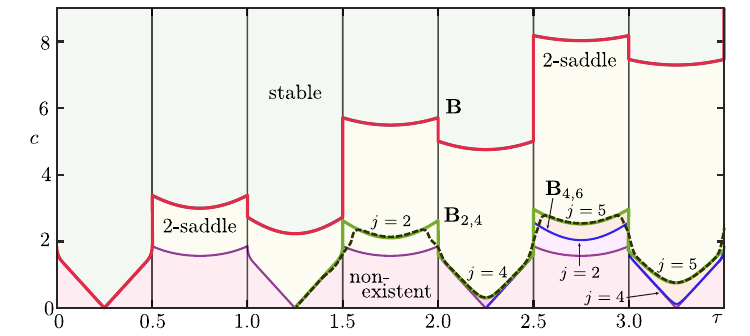}
    \caption{Stability boundary \textup{\textbf{B}} for symmetric $1{:}1$-periodic orbits with phase $\alpha\in[-\tfrac{1}{4},\tfrac{1}{4}]$, as in Figure~\ref{fig:T}(a). The green curve 
    $\textup{\textbf{B}}_{2,4}$ 
    marks the transition from $2$-saddle to $4$-saddle orbits, and the blue curve 
    $\textup{\textbf{B}}_{4,6}$ the transition from $4$-saddle to $6$-saddle orbits. The dashed black curve shows the corresponding bifurcation of period-one orbits in the smooth GZT model~\eqref{eq:GZT} for $\kappa=33$, where the orbit changes from $2$-saddle to $4$-saddle. The numbers $j$ label the unitary complex Floquet multipliers, indexed as in~\eqref{eq:omegazero} and~\eqref{eq:omegaone}.}
    \label{fig:Tunst}
\end{figure}

The stability boundary \textbf{B} for the psGZT model \eqref{eq:iGZT} is rooted at $(\tau,c)=(\tfrac14,0)$,
and follows the fold bifurcation curve 
$c=u(\tau)$ for $\tau\in[\tfrac12,\hat\theta]$. 
But the proof of Theorem~\ref{thm:w} shows that there
are fold bifurcation curves for 
$c=u(\theta)$ for $\tau=k+\theta\in[k+\tfrac12,k+\hat\theta]$ for each $k\in\N_0$, which give rise to additional bifurcation curves for the 1:1-periodic orbits. 

In the proof of Theorem~\ref{thm:w} we needed to identify which of these bifurcations resulted in the loss of stability. Here, we will briefly investigate the additional bifurcations from the unstable 1:1-periodic orbit for the psGZT model, as well as their homologues in the GZT model.

Two of these bifurcation curves, labelled 
$\textup{\textbf{B}}_{2,4}$ and 
$\textup{\textbf{B}}_{4,6}$
are shown in Figure~\ref{fig:Tunst}. For 
$\tau\in[\tfrac54,\tfrac32]$ the $\textup{\textbf{B}}_{2,4}$ is just a translation of the \textbf{B} curve
from $\tau\in[\tfrac14,\tfrac12]$. 
For $\tau\geq\tfrac32$ the curve $\textup{\textbf{B}}_{2,4}$ is composed of vertical segments 
at
when $\tau=\tfrac{k}{2}$ for $k\in\N$, and curved segments in between corresponding to the torus bifurcations with different values of $j$ as identified in the proof of Theorem~\ref{thm:w}. Each time a torus bifurcation curve is crossed as $c$ is decreased the number of unstable Floquet multipliers increases by two, and the labelling $\textup{\textbf{B}}_{2,4}$, $\textup{\textbf{B}}_{4,6}$ denotes this. When there are multiple valid values of $j$, we showed in the proof of Theorem~\ref{thm:w} that $j=1$ always results in the largest $c$ value
and hence the segments with $j=1$ form the smooth parts of \textbf{B}. The curve $\textup{\textbf{B}}_{2,4}$ corresponds to the next torus bifurcation where the 1:1-period orbit transitions form a 2-saddle to a 4-saddle orbit. The curve follows different values of $j$ on different segments; $j=2$ for $\tau\in(\tfrac32,2)$ and $j=4$ for $\tau\in(2,\tfrac52)$ because these are the only valid $j$ values on these intervals, and $j=5$ for  
$\tau\in(\tfrac52,3)$ because it has higher $c$ values
than $j=2$ on this interval.

Also shown in Figure~\ref{fig:Tunst} is the numerically computed corresponding bifurcation curve for the smooth GZT model \eqref{eq:GZT}, which shows strong agreement with the curve $\textup{\textbf{B}}_{24}$, except for the vertical segments, similar
to what is seen in Figure~\ref{fig:Tddebiftool} 
for the curve \textbf{B}.

\end{document}